\setlist{leftmargin=9mm}
\numberwithin{equation}{section}
\newcommand{\R}{\mathbb{R}}
\newcommand{\pnorm}[2]{\lVert #1\rVert_{#2}}
\newcommand{\bigpnorm}[2]{\big\lVert#1\big\rVert_{#2}}
\newcommand{\biggpnorm}[2]{\bigg\lVert#1\bigg\rVert_{#2}}
\newcommand{\abs}[1]{\lvert#1\rvert}
\newcommand{\bigabs}[1]{\big\lvert#1\big\rvert}
\newcommand{\biggabs}[1]{\bigg\lvert#1\bigg\rvert}
\newcommand{\iprod}[2]{\langle#1,#2\rangle}
\renewcommand{\epsilon}{\varepsilon}
\newcommand{\smallo}{\mathfrak{o}}
\newcommand{\bigo}{\mathcal{O}}
\newcommand{\equald}{\stackrel{d}{=}}
\renewcommand{\hat}{\widehat}
\renewcommand{\tilde}{\widetilde}
\DeclareMathOperator{\E}{\mathbb{E}}
\DeclareMathOperator{\Prob}{\mathbb{P}}
\DeclareMathOperator{\tr}{tr}
\DeclareMathOperator{\var}{Var}
\DeclareMathOperator{\op}{op}
\DeclareMathOperator{\med}{\mathsf{Med}}
\DeclareMathOperator{\lip}{Lip}
\DeclareMathOperator{\err}{\mathsf{err}}
\DeclareMathOperator{\rem}{Rem}
\theoremstyle{definition}\newtheorem{problem}{Problem}[section]
\theoremstyle{definition}\newtheorem{definition}[problem]{Definition}
\theoremstyle{remark}
\theoremstyle{remark}\newtheorem{remark}{Remark}
\theoremstyle{definition}
\theoremstyle{plain}\newtheorem{theorem}[problem]{Theorem}
\theoremstyle{plain}
\theoremstyle{plain}\newtheorem{lemma}[problem]{Lemma}
\theoremstyle{plain}\newtheorem{proposition}[problem]{Proposition}
\theoremstyle{plain}
\theoremstyle{plain}
	\def\MR#1{}
\begin{document}

\title[Exact bounds for quadratic empirical processes]{Exact bounds for some quadratic empirical processes with applications}
\thanks{The research of Q. Han is partially supported by NSF grants DMS-1916221 and DMS-2143468.}

\author[Q. Han]{Qiyang Han}

\address[Q. Han]{
Department of Statistics, Rutgers University, Piscataway, NJ 08854, USA.
}
\email{qh85@stat.rutgers.edu}

\date{\today}

\keywords{covariance estimation, empirical process, Gaussian comparison inequalities, Gaussian process, random projection}
\subjclass[2000]{60B20, 60G15, 60H25}

\begin{abstract}
Let $Z_1,\ldots,Z_n$ be i.i.d. isotropic random vectors in $\R^p$, and $T \subset \R^p$ be a compact set. A classical line of empirical process theory characterizes the size of the suprema of the quadratic process 
\begin{align*}
\sup_{t \in T} \biggabs{ \frac{1}{n}\sum_{i=1}^n \iprod{Z_i}{t}^2-\pnorm{t}{}^2 },
\end{align*}
via a single parameter known as the Gaussian width of $T$.

This paper introduces an improved bound for the suprema of this quadratic process for standard Gaussian vectors $\{Z_i\}$ that can be exactly attained for certain choices of $T$, and is thus referred to as an exact bound. Our exact bound is expressed via a collection of (stochastic) Gaussian widths over spherical sections of $T$ that serves as a natural multi-scale analogue to the Gaussian width of $T$. Compared to the classical bounds for the quadratic process, our new bounds not only determine the optimal constants in the classical bounds that can be attained for some $T$, but also precisely capture certain subtle phase transitional behavior of the quadratic process beyond the reach of the classical bounds. 

To illustrate the utility of our results, we obtain tight versions of the Gaussian Dvoretzky–Milman theorem for random projection, and the Koltchinskii-Lounici theorem for covariance estimation, both with optimal constants. Moreover, our bounds recover the celebrated BBP phase transitional behavior of the top eigenvalue of the sample covariance and its generalization to the sample covariance error.

The proof of our results exploits recently sharpened Gaussian comparison inequalities. The technical scope of our method of proof is further demonstrated in obtaining an exact bound for a two-sided Chevet inequality. 
\end{abstract}

\maketitle

\vspace{-1em}
\setcounter{tocdepth}{1}
\tableofcontents

\sloppy

\section{Main results}\label{section:main_result}

\subsection{Exact bounds for the suprema of quadratic empirical processes}
Let $Z_1,\ldots,Z_n$ be i.i.d. standard Gaussian vectors in $\R^p$, and $T \subset \R^p$ be a compact set. We will be primarily interested in the behavior of the suprema of the following quadratic process
\begin{align}\label{def:expected_suprema_sp}
 \sup_{t \in T} \biggabs{ \frac{1}{n}\sum_{i=1}^n \iprod{Z_i}{t}^2-\pnorm{t}{}^2 }.
\end{align}
Before formally describing the behavior of (\ref{def:expected_suprema_sp}), with $h\sim \mathcal{N}(0,I_p)$, let
\begin{align}\label{def:rad_gw_sd_T}
r(T)\equiv \sup_{t \in T}\,\pnorm{t}{},\quad w(T)\equiv \E^{1/2} \Big(\sup_{t \in T}\,\iprod{h}{t}\Big)^{2},\quad d(T)\equiv \bigg(\frac{w(T)}{r(T)}\bigg)^2
\end{align}
be the \emph{radius}, \emph{Gaussian width} and \emph{stable dimension} of $T$ respectively; see e.g., \cite[Section 7]{vershynin2018high} for an in-depth treatment of some properties of these geometric quantities associated with $T$.

An important line of empirical process theory \cite{klartag2005empirical,adamczak2010quantitative,mendelson2007reconstruction,mendelson2010empirical,mendelson2012generic,dirksen2015tail} provides a sharp bound of the quadratic process (\ref{def:expected_suprema_sp}) using the above geometric parameters: for some universal constant $C>0$,
\begin{align}\label{eqn:chaining_sp}
\E \sup_{t \in T} \biggabs{ \frac{1}{n}\sum_{i=1}^n \iprod{Z_i}{t}^2-\pnorm{t}{}^2 }\leq C\cdot r^2(T)\cdot \bigg(\sqrt{ \frac{d(T)}{n} }+ \frac{d(T)}{n}\bigg).
\end{align}
Bounds of the type (\ref{eqn:chaining_sp}) are usually proved via chaining techniques (cf., \cite{talagrand2014upper}), which hold under more general classes of quadratic functions and distributions of $Z_i$'s. These bounds have found vast applications in signal processing and statistical applications; we only refer the readers to \cite{vershynin2018high} and references therein for a sample of these applications. 

The first goal of this paper is to provide a refined understanding for the behavior of the quadratic process (\ref{def:expected_suprema_sp}). In particular, we will provide a systematically improved bound to (\ref{eqn:chaining_sp}) which can actually be exactly attained for certain choices of $T$, and thus is refer to as an \emph{exact bound}.

The key to our improvement is to replace the `global' parameters associated with $T$ in (\ref{def:rad_gw_sd_T}) with a collection of `multiscale' parameters as follows. Let
\begin{align*}
\Lambda_T\equiv \{\pnorm{t}{}: t \in T\}.
\end{align*}
For any $\zeta \in \R$, with $h\sim \mathcal{N}(0,I_p)$, let
\begin{align}\label{def:E_pm}
E_{+,\zeta}(T) &\equiv \E\sup_{\alpha \in \Lambda_{T}}\bigg\{\bigg(\alpha+\frac{1}{\sqrt{n}}\sup_{t \in T, \pnorm{t}{}=\alpha}\iprod{h}{t}\bigg)^2-\zeta \alpha^2\bigg\},\nonumber\\
E_{-,\zeta}(T) &\equiv \E\sup_{\alpha \in \Lambda_{T}}\bigg\{\zeta\alpha^2-\bigg(\alpha-\frac{1}{\sqrt{n}}\sup_{t \in T, \pnorm{t}{}=\alpha}\iprod{h}{t}\bigg)^2_+\bigg\}. 
\end{align}
For notational convenience, we shall write 
\begin{align*}
E_\ast(T)\equiv E_{+,1}(T).
\end{align*} 
We shall also write $G \in \R^{n\times p}$ as a standard Gaussian matrix with i.i.d. $\mathcal{N}(0,1)$ entries, and $G_n\equiv G/\sqrt{n}$. Using this notation, the quadratic process in (\ref{def:expected_suprema_sp}) then can be written equivalently in law as $\sup_{t \in T}\abs{\pnorm{G_n t}{}^2-\pnorm{t}{}^2}$. 

\begin{theorem}\label{thm:square_process}
	Let  $T \subset \R^p$ be a compact set with $0 \in T$. Then there exists some universal constant $C>0$ such that for any $\zeta \in \R$ and any $x\geq 1$, with probability at least $1-e^{-x}$, 
	\begin{align*}
	\sup_{t \in T} \pm\big( \pnorm{G_n t}{}^2-\zeta \pnorm{t}{}^2 \big) \leq  E_{\pm,\zeta}(T)+C\cdot r^2(T)\cdot \bigg(\sqrt{1\vee \abs{\zeta}\vee \frac{d(T)}{n}}\sqrt{\frac{x}{n}}+\frac{x}{n}\bigg).
	\end{align*}
\end{theorem}

The condition $0 \in T$ in the above theorem is imposed for technical convenience. With this condition, the suprema on the left hand side of the above display is always non-negative.

As an immediate consequence of Theorem \ref{thm:square_process} in the main case $\zeta=1$ of our interest, we obtain the following dimension-free estimate for the quadratic process (\ref{def:expected_suprema_sp}) (without assuming $0 \in T$).
\begin{theorem}\label{thm:general_bound_sp_dT}
Let $T \subset \R^p$ be a compact set. Then with $T_0\equiv T\cup \{0\}$, there exists some universal constant $C>0$ such that for any $x\geq 1$, with probability at least $1-e^{-x}$,
\begin{align*}
&\sup_{t \in T} \bigabs{ \pnorm{G_n t}{}^2-\pnorm{t}{}^2 }\leq E_\ast(T_0)+C\cdot r^2(T)\cdot \bigg(\sqrt{1\vee \frac{d(T)}{n}}\sqrt{\frac{x}{n}}+\frac{x}{n}\bigg)\\
& \leq r^2(T)\cdot \bigg(1+\sqrt{ \frac{Cx}{d(T)} }+\frac{Cx}{\sqrt{d(T)\cdot (n\vee d(T))}}\bigg)\cdot\bigg(2\sqrt{ \frac{d(T)}{n} }+ \frac{d(T)}{n}\bigg).
\end{align*}
\end{theorem}

Clearly, by integrating the tail we immediately obtain moment estimates for the quadratic process (\ref{def:expected_suprema_sp}).

Now it is evident that Theorem \ref{thm:general_bound_sp_dT} improves the chaining bound (\ref{eqn:chaining_sp}) for the quadratic process (\ref{def:expected_suprema_sp}) in major ways: it not only leads to optimal constants in the bound (\ref{eqn:chaining_sp}) that can be attained for some $T$, as we will see later, but it also captures the correct phase transitional behavior of (\ref{def:expected_suprema_sp}) in the canonical spiked covariance model, which is fundamentally beyond the reach of the classical bound (\ref{eqn:chaining_sp}). 

We note that while the quantity $E_\ast(T)$ (or $E_\ast(T_0))$ in Theorem \ref{thm:general_bound_sp_dT} offers a systematic improvement over the classical bound (\ref{eqn:chaining_sp}), we do not expect it to be exact for every choice of $T$. For instance, for $T_\infty=[-1,1]^p$, the behavior of (much) simpler process $\sup_{v \in T_\infty} \pnorm{G_n v}{}$ (after proper normalization) is intimately related to the Parisi formula in the spin glass model \cite{chen2023gaussian} that goes beyond the Gaussian process method; see also the paragraphs following \cite[Problem II.14]{davidson2001local} for related discussion. It remains open to characterize the precise geometric properties of $T$ that make the bound via $E_\ast(T)$ exact for the quadratic process (\ref{def:expected_suprema_sp}).

\begin{remark}
Multiscale parameters similar to (\ref{def:E_pm}) have appeared in the classical Gaussian process theory for (much) simpler processes. For instance, by leveraging the classical Gaussian comparison principles, \cite[Corollary 1.2]{gordon1988milman} proved that for any compact set $T \subset \R^p$, 
\begin{align}\label{ineq:gordon}
\E \sup_{t \in T} \pnorm{G_n t}{}\leq \E \sup_{\alpha \in \Lambda_{T}} \bigg\{\alpha + \frac{1}{\sqrt{n}}\sup_{t \in T, \pnorm{t}{}=\alpha} \iprod{h}{t}\bigg\}+\frac{C\cdot  r(T)}{\sqrt{n}}. 
\end{align}
A major simplification in (\ref{ineq:gordon}) is that $\sup_{t \in T}\pnorm{G_n t}{} = \sup_{t \in T, s \in \partial B_n}\iprod{s}{G_n t}$ can be expressed as a bilinear form of $G_n$, which then allows a direct reduction to decoupled Gaussian linear forms via comparison principles. Here our Theorem \ref{thm:general_bound_sp_dT} can be viewed as a significant generalization of (\ref{ineq:gordon}) where such straightforward reduction is not possible, and therefore classical comparison inequalities cannot be applied at least directly.
\end{remark}

\subsection{An extension: exact bounds for a two-sided Chevet inequality}

The method of proof for Theorems \ref{thm:square_process} and \ref{thm:general_bound_sp_dT}, as will be clear from Section \ref{section:proof_outline} ahead, is based on a recently sharpened Gaussian comparison principle. As an illustration of the technical flexibility of this proof method, we will also provide below an improved, exact bound for 
\begin{align}\label{def:chevet_proc}
\sup_{t \in T} \Big|\sup_{s \in S}\iprod{s}{G_n t}- \zeta\pnorm{t}{}\Big|,\quad \zeta \in \R,
\end{align}
where $T\subset \R^p$ and $S \subset \R^n$ are two compact sets. A canonical choice of $\zeta$ in the above display (\ref{def:chevet_proc}) is 
\begin{align}\label{def:zetaS}
\zeta_S\equiv  \frac{w_1(S)}{\sqrt{n}}\equiv \frac{1}{\sqrt{n}}\E \sup_{s \in S}\iprod{g}{s},\quad g \sim \mathcal{N}(0,I_n).
\end{align}
For this choice of $\zeta=\zeta_S$, it is well known that there exists some universal constant $C>0$ such that
\begin{align}\label{ineq:chevet_twosided}
\E \sup_{t \in T} \Big|\sup_{s \in S}\iprod{s}{G_n t}- \zeta_S\pnorm{t}{}\Big|\leq \frac{C\cdot w(T)r(S)}{\sqrt{n}}. 
\end{align}
The readers are referred to \cite[Chapters 9 and 11]{vershynin2018high} and a detailed account and numerous applications of the two-sided Chevet inequality (\ref{ineq:chevet_twosided}) (some results therein are proved for special choices of $S$, say, $S=B_n$ being the unit ball).

To describe our improved bound for (\ref{def:chevet_proc}), for any $\zeta \in \R$, with $g\sim \mathcal{N}(0,I_n)$ and $h\sim \mathcal{N}(0,I_p)$ being independent standard Gaussian vectors, let
\begin{align}\label{def:E_pm_sqrt}
E_{+,\zeta}^\circ(T,S)&\equiv \E \sup_{\alpha \in \Lambda_T,\beta \in \Lambda_S}\bigg\{ \frac{\alpha}{\sqrt{n}}\sup_{s \in S, \pnorm{s}{}=\beta}\iprod{g}{s}+ \frac{\beta}{\sqrt{n}} \sup_{t\in T,\pnorm{t}{}=\alpha} \iprod{h}{t}-\zeta\alpha\bigg\},\nonumber\\
E_{-,\zeta}^\circ(T,S)&\equiv \E \sup_{\alpha \in \Lambda_T}\inf_{\beta \in \Lambda_S}\bigg\{\zeta\alpha- \frac{\alpha}{\sqrt{n}}\sup_{s \in S, \pnorm{s}{}=\beta}\iprod{g}{s}+ \frac{\beta}{\sqrt{n}} \sup_{t\in T,\pnorm{t}{}=\alpha} \iprod{h}{t}\bigg\}.
\end{align}

\begin{theorem}\label{thm:sqrt_process}
	Let $T\subset \R^p$ and $S \subset \R^n$ be two compact sets with $0 \in T$. Then there exists some universal constant $C>0$ such that for any $\zeta \in \R$ and any $x\geq 1$, with probability at least $1-e^{-x}$, 
	\begin{align*}
	\sup_{t \in T} \pm\bigg(\sup_{s \in S}\iprod{s}{G_n t}- \zeta\pnorm{t}{}\bigg)\leq E_{\pm,\zeta}^\circ(T,S)+ C\cdot r(T)r(S) \sqrt{\frac{x}{n}}.
	\end{align*}
\end{theorem}

Again here $0 \in T$ in the above theorem is assumed for technical convenience; under this condition the left hand side of the above display is always non-negative.

As a fairly easy consequence of Theorem \ref{thm:sqrt_process} above, we obtain an exact bound for the process (\ref{def:chevet_proc}).

\begin{theorem}\label{thm:sqrt_process_zetaS}
	Let $T\subset \R^p$ and $S \subset \R^n$ be two compact sets. Then with $T_0\equiv T\cup \{0\}$, there exists some universal constant $C>0$ such that for any $x\geq 1$, with probability at least $1-e^{-x}$, 
	\begin{align*}
	&\sup_{t \in T} \Big|\sup_{s \in S}\iprod{s}{G_n t}- \zeta_S\pnorm{t}{}\Big|\\
	&\leq \max_{*=\pm}E_{\ast,\zeta_S}^\circ(T_0,S)+ C\cdot r(T)r(S) \sqrt{\frac{x}{n}} \leq \frac{w(T)r(S)}{\sqrt{n}}\cdot \bigg(1+\sqrt{ \frac{C x}{d(T)} }\bigg).
	\end{align*}
\end{theorem}

Compared to (\ref{ineq:chevet_twosided}), it is easy to see that the above theorem provides a sharp leading constant $1$ for the two-sided Chevet inequality.

\begin{remark}
For the case $\zeta=0$, Theorem \ref{thm:sqrt_process} can be easily obtained via standard Gaussian comparison and concentration techniques; see, e.g., \cite[Exercise 8.7.4]{vershynin2018high}.
\end{remark}

\subsection{Organization}

The rest of the paper is organized as follows. In Section \ref{section:application} we shall provide applications of the main theorems above to random projection and covariance estimation. An outline of the proof for Theorem \ref{thm:square_process} and some further notation/definitions are given in Section \ref{section:proof_outline}. All proof details are then presented in Sections \ref{section:proof_main_all}-\ref{section:proof_application}. For the sake of completeness, we include the proof of our version of the Gaussian min-max Theorem \ref{thm:CGMT} in Appendix \ref{section:proof_CGMT}.

\subsection{Notation}\label{section:notation}

For any pair of positive integers $m\leq n$, let $[m:n]\equiv \{m,\ldots,n\}$, and $(m:n]\equiv [m:n]\setminus\{m\}$, $[m:n)\equiv [m:n]\setminus\{n\}$. We often write $[n]$ for $[1:n]$. For $a,b \in \R$, $a\vee b\equiv \max\{a,b\}$ and $a\wedge b\equiv\min\{a,b\}$. For $a \in \R$, let $a_+\equiv a\vee 0$ and $a_- \equiv (-a)\vee 0$. 

For $x \in \R^n$, let $\pnorm{x}{q}=\pnorm{x}{\ell_q(\R^n)}$ denote its $q$-norm $(0\leq q\leq \infty)$ with $\pnorm{x}{2}$ abbreviated as $\pnorm{x}{}$. Let $B_n(r;x)\equiv \{z \in \R^n: \pnorm{z-x}{}\leq r\}$, and recall that $B_n$ is the unit $\ell_2$ ball in $\R^n$. We write $\partial B_n\equiv \{x \in \R^n: \pnorm{x}{}=1\}$ as the sphere in $\R^n$. For $x,y \in \R^n$, we write $\iprod{x}{y}\equiv \sum_{i=1}^n x_iy_i$. For a matrix $M$, let $\pnorm{M}{F}$ and $\pnorm{M}{\op}$  be its Frobenius norm and spectral/operator norm respectively. For any real symmetric matrix $A\in \R^{p\times p}$, let $\lambda_\pm\big(A\big) \equiv \sup_{v \in B_p} \iprod{v}{(\pm A)v}$. Then $\pnorm{A}{\op}=\lambda_+(A)\vee \lambda_-(A)$.

We use $C_{x}$ to denote a generic constant that depends only on $x$, whose numeric value may change from line to line unless otherwise specified. $a\lesssim_{x} b$ and $a\gtrsim_x b$ mean $a\leq C_x b$ and $a\geq C_x b$ respectively, and $a\asymp_x b$ means $a\lesssim_{x} b$ and $a\gtrsim_x b$. $\bigo$ and $\smallo$ denote the usual big and small O notation. 

\section{Applications}\label{section:application}

\subsection{Application I: a tight Gaussian Dvoretzky–Milman Theorem}

We first need the following definition.

\begin{definition}
For a compact set $T\subset \R^p$, let
\begin{align*}
r_-(T)\equiv \sup \{r\geq 0: B_p(r)\subset T\},\, r_+(T)\equiv \inf \{r\geq 0: T\subset B_p(r)\}
\end{align*}
be the maximum/minimum radius of a centered ball contained in/containing $T$.
\end{definition}

A classical result of Dvoretzky and Milman \cite{dvoretzky1959theorem,dvoretzky1961some,milman1971new} for Gaussian random projection says the following: for any compact set $T \in \R^p$, the convex hull of the random Gaussian projection $G_n T \subset \R^n$ is approximately a round ball of radius $n^{-1/2}w_1(T)$, whenever the projection dimension $n\ll d(T)$. More precisely, \cite[Theorem 11.3.1]{vershynin2018high} shows that for some universal constant $C>0$, with high probability (say, at least 0.99), 
\begin{align}\label{ineq:DM_r_classical}
\bigg\{\frac{w_1(T)}{\sqrt{n}}-C r(T)\bigg\}_+  &\leq r_-\big(\mathrm{conv}(G_n T)\big)\leq r_+\big(\mathrm{conv}(G_n T)\big)\leq \frac{w_1(T)}{\sqrt{n}}+C r(T).
\end{align}
For a full literature review of this classical result, the readers are referred to \cite[Chapter 5 and Section 9.2]{artstein2015asymptotic}, \cite[Section 9.1]{ledoux2013probability}, \cite[Section 11.4]{vershynin2018high} and references therein; see also \cite[Theorem 1.6]{han2022gaussian} for a local version of this result.

The following theorem provides a tight version of the Gaussian Dvoretzky–Milman theorem. 

\begin{theorem}\label{thm:DM}
Let $T \subset \R^p$ be a compact set with $0 \in T$.  With $h\sim \mathcal{N}(0,I_p)$, let
\begin{align}\label{def:DM_r}
\mathfrak{r}_{T,+}&\equiv \frac{w_1(T)}{\sqrt{n}}+\bigg\{\E \sup_{\alpha \in \Lambda_{T}}\bigg( \frac{1}{\sqrt{n}}\sup_{t \in T, \pnorm{t}{}=\alpha}\iprod{h}{t}+\alpha\bigg)-\frac{w_1(T)}{\sqrt{n}}  \bigg\}_+,\nonumber\\
\mathfrak{r}_{T,-}&\equiv \frac{w_1(T)}{\sqrt{n}}-\bigg\{ \frac{w_1(T)}{\sqrt{n}} - \E\sup_{\alpha \in \Lambda_{T}}\bigg( \frac{1}{\sqrt{n}}\sup_{t \in T, \pnorm{t}{}=\alpha}\iprod{h}{t}-\alpha\bigg) \bigg\}_+.
\end{align}
Then there exists some universal constant $C>0$ such that for any $x\geq 1$, with probability at least $1-e^{-x}$,
\begin{align*}
\mathfrak{r}_{T,-}-C\cdot r(T) \sqrt{ \frac{x}{n} } &\leq r_-\big(\mathrm{conv}(G_n T)\big)\leq r_+\big(\mathrm{conv}(G_n T)\big)\leq \mathfrak{r}_{T,+}+C\cdot r(T) \sqrt{ \frac{x}{n} }.
\end{align*}
\end{theorem}
The proof of the above theorem can be found in Section \ref{subsection:proof_DM}. From (\ref{def:DM_r}), it is easy to see that
\begin{align}\label{ineq:DM_r}
\bigg\{\frac{w_1(T)}{\sqrt{n}}-r(T)\bigg\}_+ \leq \mathfrak{r}_{T,-}\leq \mathfrak{r}_{T,+}\leq \frac{w_1(T)}{\sqrt{n}}+r(T).
\end{align}
Then Theorem \ref{thm:DM} implies that with very high probability, $\mathrm{conv}(G_n T)$ is contained in a centered ball of radius $n^{-1/2}w_1(T)+ r(T)$ and contains a centered ball of radius $\big(n^{-1/2}w_1(T)- r(T)\big)_+$ (ignoring fluctuation terms of a smaller order). It is easy to see that the constant $1$ before $r(T)$ is optimal. In fact, consider the regime $p\asymp n$ and the example $T=B_p$. Then $r(T)=1$ and $w_1(T)=\sqrt{p}\cdot(1+\mathfrak{o}(1))$. The characterization condition (\ref{ineq:DM_variational}) in the proof ahead shows that with high probability,
\begin{align*}
r_-\big(\mathrm{conv}(G_n T)\big) = \big(\sqrt{{p}/{n}}-1+\mathfrak{o}(1)\big)_+,\quad r_+\big(\mathrm{conv}(G_n T)\big) = \sqrt{{p}/{n}}+1+\mathfrak{o}(1),
\end{align*}
matching exactly the upper and lower bounds obtained in (\ref{ineq:DM_r}).

\subsection{Application II: covariance estimation}\label{subsection:application_cov_est}
Let $X_1,\ldots,X_n \in \R^p$ be i.i.d. centered Gaussian random vectors with covariance $\Sigma\in \R^{p\times p}$, and let $
\hat{\Sigma}\equiv n^{-1}\sum_{i=1}^n X_iX_i^\top$ be the sample covariance.

\subsubsection{A dimension-free Koltchinskii-Lounici theorem with optimal constants}
A significant line of non-asymptotic theory of the sample covariance $\hat{\Sigma}$ concerns its estimation error for the underlying covariance $\Sigma$ measured in the spectral (operator) norm, i.e., $\pnorm{\hat{\Sigma}-\Sigma}{\op}$. We only refer the readers to the more recent references, e.g., \cite{tropp2016expected,liaw2017simple,koltchinskii2017concentration,minsker2017some,van2017structured,vershynin2018high,tikhomirov2018sample,brailovskaya2022universality,bandeira2023matrix,bandeira2024matrix,zhivotovskiy2024dimension}; many more earlier references can be found therein. 

An important milestone of this program is achieved in the path-breaking work of Koltchinskii and Lounici \cite{koltchinskii2017concentration} which shows that
\begin{align}\label{eqn:intro_KL}
\E \big\{\pnorm{\hat{\Sigma}-\Sigma}{\op}/ \pnorm{\Sigma}{\op} \big\} \asymp \sqrt{ \frac{\mathsf{r}(\Sigma)}{n}  }+ \frac{\mathsf{r}(\Sigma)}{n},
\end{align}
where the `effective rank' $\mathsf{r}(\Sigma)$ is defined by $\mathsf{r}(\Sigma)\equiv \tr(\Sigma)/\pnorm{\Sigma}{\op}$.

The original proof for the upper bound of (\ref{eqn:intro_KL}) in \cite{koltchinskii2017concentration} is based on a chaining estimate of \cite{mendelson2010empirical} that actually holds beyond Gaussian data. For Gaussian $\{X_i\}$'s, the upper bound of (\ref{eqn:intro_KL}) can also be proved via Slepian-Fernique comparison inequalities, cf. \cite{van2017structured}. Here we will apply Theorem \ref{thm:general_bound_sp_dT} to obtain a version of (\ref{eqn:intro_KL}) with tight constants. 
\begin{theorem}\label{thm:improved_KL}
	There exists some universal constant $C>0$ such that
	\begin{align}\label{eqn:KL_exact_3}
	\E \big\{\pnorm{\hat{\Sigma}-\Sigma}{\op}/\pnorm{\Sigma}{\op}\big\}&\leq E_\ast\big(T_\Sigma/r(T_\Sigma)\big)+ \frac{C }{\sqrt{n}}\sqrt{1\vee \frac{\mathsf{r}(\Sigma)}{n}} \nonumber \\
	&\leq \bigg(1+\frac{C}{\sqrt{\mathsf{r}(\Sigma)}}\bigg)\cdot \bigg(2\sqrt{\frac{\mathsf{r}(\Sigma)}{n}}+ \frac{\mathsf{r}(\Sigma)}{n}\bigg).
	\end{align}
	Here $T_\Sigma\equiv \Sigma^{1/2}(B_p)$.
\end{theorem}
\begin{proof}
An easy calculation shows that
\begin{align*}
\E \pnorm{\hat{\Sigma}-\Sigma}{\op} 
&= \E \sup_{t \in \Sigma^{1/2}(B_p)}\bigabs{ \pnorm{G_n t}{}^2-\pnorm{t}{}^2 }.
\end{align*}
Consequently we only need to compute the radius, Gaussian width and stable dimension of $T_\Sigma= \Sigma^{1/2}(B_p)$. Some calculations show that 
\begin{align*}
r(T_\Sigma)=\pnorm{\Sigma}{\op}^{1/2},\quad w(T_\Sigma)=\big\{\tr(\Sigma)\big\}^{1/2},\quad d(T_\Sigma)=\mathsf{r}(\Sigma).
\end{align*}
Now we may apply Theorem \ref{thm:general_bound_sp_dT} to conclude. 
\end{proof}
Clearly, the constants $2$ before $\sqrt{\mathsf{r}(\Sigma)/n}$ and $1$ before $\mathsf{r}(\Sigma)/n$ in (\ref{eqn:KL_exact_3}) are optimal in view of the Bai-Yin law \cite{bai1993limit} for the isotropic case $\Sigma=I_p$. Nor can the multiplicative factor $1+C/\sqrt{\mathsf{r}(\Sigma)}$ be removed by considering, e.g., the simplest one-dimensional case where $X_1,\ldots,X_n$'s are i.i.d. $\mathcal{N}(0,1)$ with $\mathsf{r}(\Sigma)=1$.

\begin{remark}
	A weaker bound in a similar spirit to (\ref{eqn:KL_exact_3}) is obtained in \cite[Theorem 2.1]{cai2022nonasymptotic} with additional logarithmic dependence on the dimension; see also the discussion after \cite[Proposition 3.12]{bandeira2023matrix} for a dimension-dependent version of (\ref{eqn:KL_exact_3}). Those dimension-dependent logarithm terms cannot be removed as the bounds therein cover general non-homogeneous settings. 
\end{remark}

\begin{remark}
As will be clear from below, the quantity $ E_\ast(T_\Sigma)$ precisely capture certain phase transitional behavior for $\pnorm{\hat{\Sigma}-\Sigma}{\op}$ in the spiked model (\ref{def:spiked_cov}) below, beyond the reach of any characterization via the effective rank $\mathsf{r}(\Sigma)$ alone. A matching lower bound was claimed in a preliminary arXiv version of this paper, but the proof is flawed. A rigorous proof for the lower bound is given in the recent work of \cite{bandeira2024matrix} in the spiked model via a suite of different techniques. A purely Gaussian process proof for this lower bound with possible dimension-free error terms remains open. 
\end{remark}

\subsubsection{Recovering BBP phase transitions and beyond}

Let us illustrate now the tightness of Theorem \ref{thm:improved_KL} in the popular class of spiked covariance models \cite{johnstone2001distribution}: For a fixed orthonormal system $\{\mathsf{v}_j\}_{j \in [p]}$ of $\R^p$, we consider
\begin{align}\label{def:spiked_cov}
\Sigma_{r,\lambda}\equiv I_p+ \lambda \sum_{j=1}^r \mathsf{v}_j\mathsf{v}_j^\top,\quad (r,\lambda)\in [p]\times \R_{\geq 0}.
\end{align}
To this end, for any $\delta\geq 0$, let $\overline{\Psi}_\delta, \Psi_\delta: \R_{\geq 0}\to \R_{\geq 0}$ be defined by
\begin{align}\label{def:psi_spiked_cov}
\overline{\Psi}_\delta(\lambda)&\equiv \bigg(1+\frac{\delta}{\lambda}\bigg)\big(1+\lambda\big),\nonumber\\
\Psi_\delta(\lambda)&\equiv \frac{\lambda+1}{\sqrt{\delta+4\lambda}}\bigg[2\sqrt{\delta}+\bigg(\frac{\sqrt{\delta}+\sqrt{\delta+4\lambda}}{2\lambda}\bigg)\cdot \delta\bigg].
\end{align}
With these notations, we have the following.

\begin{proposition}\label{prop:spiked_cov}
	There exists some universal constant $C>0$ such that for any $(r,\lambda) \in [p]\times \R_{\geq 0}$, with $\delta\equiv (p-r)/n$,
	\begin{align*}
	&\bigabs{E_{+,0}(T_\Sigma)-\overline{\Psi}_\delta\big(\lambda \vee \sqrt{\delta}\big)}\vee \bigabs{E_{+,1 }(T_\Sigma)-\Psi_\delta\big(\lambda \vee \{1+\sqrt{\delta}\}\big)}\\
	&\leq C(1+\lambda)\cdot\bigg\{ \sqrt{1+\frac{\delta}{1+\lambda}+\frac{r}{n}} \cdot\sqrt{\frac{r}{n}}\bigg\}.
	\end{align*}
\end{proposition}

The proof of the above proposition involves some book-keeping calculations so will be deferred to Section \ref{subsection:proof_spiked_all}.

Clearly, the quantity $E_{+,0}(T_\Sigma)$ recovers the celebrated BBP phase transition \cite{baik2005phase} for $\pnorm{\hat{\Sigma}}{\op}$ that occurs at the signal strength $\lambda=\sqrt{\delta}$. Interestingly, the calculation for $E_{+,1 }(T_\Sigma)$ suggests a phase transition at a different signal strength $\lambda=1+\sqrt{\delta}$ for the sample covariance error $\pnorm{\hat{\Sigma}-\Sigma}{\op}$ under the spiked model  (\ref{def:spiked_cov}). A rigorous proof for the lower bound is recently obtained in \cite{bandeira2024matrix}.

\section{Proof outline}\label{section:proof_outline}

\subsection{Technical tools}
The main technical tool we will be using in proving our main exact bounds is  the following version of the Gaussian min-max theorem.

\begin{theorem}[Gaussian min-max Theorem]\label{thm:CGMT}
	Suppose $D_u \subset \R^{n_1+n_2}, D_v \subset \R^{m_1+m_2}$ are compact sets, and $Q: D_u\times D_v \to \R$ is continuous. Let $G=(G_{ij})_{i \in [n_1],j\in[m_1]}$ with $G_{ij}$'s i.i.d. $\mathcal{N}(0,1)$, and $g \sim \mathcal{N}(0,I_{n_1})$, $h \sim \mathcal{N}(0,I_{m_1})$ be independent Gaussian vectors. For $u \in \R^{n_1+n_2}, v \in \R^{m_1+m_2}$, write $u_1\equiv u_{[n_1]}\in \R^{n_1}, v_1\equiv v_{[m_1]} \in \R^{m_1}$. Define
	\begin{align}
	\Phi^{\textrm{p}}_- (G)& = \max_{u \in D_u}\min_{v \in D_v} \Big( u_1^\top G v_1 + Q(u,v)\Big),\nonumber\\
	\Phi^{\textrm{p}}_+ (G)& = \min_{v \in D_v}\max_{u \in D_u} \Big( u_1^\top G v_1 + Q(u,v)\Big)\nonumber\\
	\Phi^{\textrm{a}}(g,h)& = \max_{u \in D_u}\min_{v \in D_v} \Big(\pnorm{v_1}{} g^\top u_1 + \pnorm{u_1}{} h^\top v_1+ Q(u,v)\Big).
	\end{align}
	Then for all $t \in \R$,
	\begin{align*}
	\Prob\big(\Phi^{\textrm{p}}_- (G)\geq t\big)&\leq 2 \Prob\big(\Phi^{\textrm{a}}(g,h)\geq t\big),\\
	\Prob\big(\Phi^{\textrm{p}}_+ (G)\leq t\big)&\leq 2 \Prob\big(\Phi^{\textrm{a}}(g,h)\leq t\big).
	\end{align*}	
\end{theorem}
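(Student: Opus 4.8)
The plan is to deduce this statement---the Convex Gaussian Min-Max Theorem, which is exactly \cite[Corollary G.1]{miolane2021distribution}---from Gordon's Gaussian min-max comparison inequality \cite{gordan1985some,gordon1988milman} together with Sion's min-max theorem (Lemma \ref{lem:sion_minmax}). The two objects to compare are the primary objective $\Psi^{\mathrm p}(u,v)\equiv u_1^\top G v_1+Q(u,v)$ and the auxiliary objective $\Psi^{\mathrm a}(u,v)\equiv \pnorm{v_1}{}\,g^\top u_1+\pnorm{u_1}{}\,h^\top v_1+Q(u,v)$, and everything rests on comparing the two centered Gaussian fields $X_{u,v}\equiv u_1^\top G v_1$ and $Y_{u,v}\equiv \pnorm{v_1}{}\,g^\top u_1+\pnorm{u_1}{}\,h^\top v_1$ on $D_u\times D_v$.

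For part (1) I would first reduce to finite index sets: since $D_u,D_v$ are compact and $Q$ is continuous, the fields $X$ and $Y$ are a.s.\ continuous on $D_u\times D_v$, so the extrema defining $\Phi^{\mathrm p},\Phi^{\mathrm a}$ are attained, and it suffices to establish both tail inequalities over an increasing sequence of finite nets $D_u^{(k)}\uparrow D_u$, $D_v^{(k)}\uparrow D_v$ and then pass to the limit by continuity and monotone convergence. On a finite net one applies Gordon's comparison to $X$ and $Y$; the hypotheses amount to the covariance comparisons of $\E[X_{u,v}X_{u,v'}]$ against $\E[Y_{u,v}Y_{u,v'}]$ (same $u$) and of $\E[X_{u,v}X_{u',v'}]$ against $\E[Y_{u,v}Y_{u',v'}]$ ($u\ne u'$), which reduce to the elementary identity $\E[(u_1^\top G v_1)(u_1'^\top G v_1')]=\iprod{u_1}{u_1'}\iprod{v_1}{v_1'}$ and Cauchy--Schwarz. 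Since the pointwise variances of $X$ and $Y$ differ by a factor of two ($\pnorm{u_1}{}^2\pnorm{v_1}{}^2$ versus $2\pnorm{u_1}{}^2\pnorm{v_1}{}^2$), one must invoke the form of Gordon's comparison that permits unequal variances, whose proof symmetrizes an independent copy of the primary matrix; this is exactly the source of the multiplicative constant $2$. Finally, the deterministic term $Q$ is encoded through the non-constant thresholds $\lambda_{u,v}\equiv t-Q(u,v)$, so that $\{\Phi^{\mathrm p}(G)\ge t\}$ becomes an event of the form $\bigcup_u\bigcap_v\{X_{u,v}\ge\lambda_{u,v}\}$; rearranging the resulting comparison gives $\Prob(\Phi^{\mathrm p}(G)\ge t)\le 2\Prob(\Phi^{\mathrm a}(g,h)\ge t)$, with no convexity used.

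For part (2), the additional ingredient is Sion's theorem: when $(u,v)\mapsto u_1^\top G v_1+Q(u,v)$ satisfies its hypotheses a.s.\ ($D_u,D_v$ convex, $Q$ concave-convex), one has $\Phi^{\mathrm p}(G)=\max_{u\in D_u}\min_{v\in D_v}\Psi^{\mathrm p}(u,v)=\min_{v\in D_v}\max_{u\in D_u}\Psi^{\mathrm p}(u,v)$ a.s., so that, using $-G\equald G$, the quantity $-\Phi^{\mathrm p}(G)$ is, in law, the primary value of the problem obtained by interchanging the roles of $u$ and $v$ and replacing $Q$ by $-Q$. Applying part (1) to this interchanged problem---and using the analogous saddle structure of the auxiliary objective to identify its value with $-\Phi^{\mathrm a}(g,h)$ in law---gives $\Prob(-\Phi^{\mathrm p}(G)\ge -t)\le 2\Prob(-\Phi^{\mathrm a}(g,h)\ge -t)$, i.e.\ the asserted $\Prob(\Phi^{\mathrm p}(G)\le t)\le 2\Prob(\Phi^{\mathrm a}(g,h)\le t)$. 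I expect part (2) to be the main obstacle: the min-max interchange for the primary objective is precisely where convexity is indispensable, and one must separately justify that the corresponding interchange and identification go through for the auxiliary objective, which is not itself concave-convex; a secondary, more mechanical difficulty is lining up the inequality directions in Gordon's comparison with the two target tail bounds and carrying the symmetrization bookkeeping behind the factor $2$.
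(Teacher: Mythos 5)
First, a point of reference: the paper does not prove Theorem \ref{thm:CGMT} at all---it is imported verbatim from \cite[Corollary G.1]{miolane2021distribution}---so your proposal is measured against the standard literature proof (Gordon's comparison plus Sion), which is indeed the route you have chosen. The outline is the correct one, but two steps, as you have written them, would not go through. In part (1), Gordon's two-sided \emph{probability} comparison $\Prob(\cap_i\cup_j\{Y_{ij}\ge\lambda_{ij}\})\ge\Prob(\cap_i\cup_j\{X_{ij}\ge\lambda_{ij}\})$ genuinely requires equal pointwise variances; the variants that tolerate a variance inequality only yield comparisons of expectations, which is not enough here. The standard repair is not a ``symmetrized copy of the primary matrix'' but the augmentation $\tilde X_{u,v}\equiv u_1^\top G v_1+\pnorm{u_1}{}\pnorm{v_1}{}\gamma$ with an independent scalar $\gamma\sim\mathcal{N}(0,1)$: this equalizes variances and makes the cross-covariance gap equal to $(\pnorm{u_1}{}\pnorm{u_1'}{}-\iprod{u_1}{u_1'})(\pnorm{v_1}{}\pnorm{v_1'}{}-\iprod{v_1}{v_1'})\ge0$, vanishing when $u=u'$, so Gordon applies; the factor $2$ then arises from $\Prob(\Phi^{\mathrm p}\ge t)=2\,\Prob(\Phi^{\mathrm p}\ge t,\ \gamma\ge0)$ together with $\tilde X_{u,v}\ge u_1^\top Gv_1$ on $\{\gamma\ge0\}$. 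As stated, your appeal to ``the form of Gordon's comparison that permits unequal variances'' is an appeal to a tool that does not exist in the form you need.

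The second, more serious gap is the one you yourself flag in part (2) and leave unresolved: the value of the role-swapped auxiliary problem cannot be ``identified with $-\Phi^{\mathrm a}(g,h)$ in law'' by any saddle-point argument, because the auxiliary objective is not concave-convex and Sion does not apply to it. The resolution is that no identification is needed---only a one-sided inequality, which holds for free. Concretely, after Sion and $-G\equald G$ one has $-\Phi^{\mathrm p}(G)\equald\tilde\Phi^{\mathrm p}\equiv\max_{v}\min_{u}(u_1^\top Gv_1-Q(u,v))$, and part (1) applied to this swapped problem gives $\Prob(\tilde\Phi^{\mathrm p}\ge -t)\le2\,\Prob(\tilde\Phi^{\mathrm a}\ge -t)$ with $\tilde\Phi^{\mathrm a}=\max_{v}\min_{u}(\pnorm{u_1}{}h^\top v_1+\pnorm{v_1}{}g^\top u_1-Q(u,v))$. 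Using $(g,h)\equald(-g,-h)$ and then the universally valid weak duality $\max_v\min_u\le\min_u\max_v$, one gets $\tilde\Phi^{\mathrm a}\equald\max_v\min_u(-\pnorm{u_1}{}h^\top v_1-\pnorm{v_1}{}g^\top u_1-Q)\le\min_u\max_v(-\pnorm{u_1}{}h^\top v_1-\pnorm{v_1}{}g^\top u_1-Q)=-\Phi^{\mathrm a}(g,h)$, whence $\Prob(\tilde\Phi^{\mathrm a}\ge-t)\le\Prob(\Phi^{\mathrm a}\le t)$ and the claimed bound follows. The point is that the inequality $\max\min\le\min\max$ happens to point in the favorable direction for the left tail of $\Phi^{\mathrm p}$; without this observation your chain of reductions does not close.
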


The above version of the Gaussian min-max theorem is essentially contained in the proof of \cite[Corollary G.1]{miolane2021distribution} via Gordon's min-max theorem \cite{gordon1985some,gordon1988milman}; for completeness we spell out the details in the Appendix \ref{section:proof_CGMT}. 

A major technical merit of this recent version of Gaussian min-max theorem, as opposed to its classical version in, e.g., \cite{gordon1985some,gordon1988milman,ledoux2013probability,artstein2015asymptotic}, is that the two random optimization problems $\Phi^{\textrm{p}}_\pm (G)$ and $\Phi^{\textrm{a}}(g,h)$ are tightly related via \emph{tail bounds}. Moreover, under strong duality with $\Phi^{\textrm{p}}_+ (G)=\Phi^{\textrm{p}}_- (G)$, we may completely determine its value through $\Phi^{\textrm{a}}(g,h)$. In this sense this version of the Gaussian comparison principle is expected to be tight in a wide range of random optimization problems. The reader is referred to \cite{stojnic2013framework,thrampoulidis2018precise,miolane2021distribution,han2023noisy,han2023universality,han2023distribution,montanari2023generalization,celentano2023lasso} for a partial list of recent works on applications of this two-sided comparison principle to a number of concrete statistical problems. 

The application of the Gaussian min-max theorem is often coupled with the following classical Sion's min-max theorem. 

\begin{lemma}[Sion's min-max theorem]\label{lem:sion_minmax}
	Let $X$ be a compact convex subset of a linear topological space and $Y$ a convex subset of a linear topological space. If $f$ is a real-valued function on $X\times Y$ satisfying:
	\begin{enumerate}
		\item $y\mapsto f(x,y)$ is upper-semicontinuous and quasi-concave for all $x \in X$;
		\item$x\mapsto f(x,y)$ is lower-semicontinuous and quasi-convex  for all $y \in Y$.
	\end{enumerate}
	Then $\min_{x \in X} \sup_{y \in Y} f(x,y)= \sup_{y \in Y} \min_{x \in X}f(x,y)$.
\end{lemma}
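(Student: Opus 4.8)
This is Sion's classical minimax theorem, so the plan is to follow the standard route: reduce the infinite problem to a finite one by compactness, and then dispatch the finite problem by a Knaster--Kuratowski--Mazurkiewicz / fixed-point argument (equivalently, by the iterative reduction to a two-point lemma). First I would record the trivial ingredients. For each fixed $y$, the map $x\mapsto f(x,y)$ is lower semicontinuous on the compact set $X$, so the inner infima are attained and writing $\min_{x}$ is legitimate; likewise $x\mapsto\sup_{y}f(x,y)$, being a supremum of lower semicontinuous functions, is lower semicontinuous, so $\min_{x}\sup_{y}f$ is attained as well. The inequality $\sup_{y}\min_{x}f\le\min_{x}\sup_{y}f$ is immediate from $f(x,y)\le\sup_{y'}f(x,y')$, so the entire content is the reverse inequality, which I prove by contradiction: assume $w\equiv\sup_{y}\min_{x}f<c<\min_{x}\sup_{y}f$ for some real $c$.

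Next I would carry out the compactness reduction. Since $\sup_{y}f(x,y)>c$ for every $x\in X$, the sets $\{x:f(x,y)>c\}$ (open, by lower semicontinuity in $x$) cover $X$, so compactness yields finitely many $y_1,\dots,y_n$ with $\max_{i}f(x,y_i)>c$ for all $x$; as $x\mapsto\max_{i}f(x,y_i)$ is lower semicontinuous on compact $X$, its minimum is attained and strictly exceeds $c$. Hence it suffices to reach a contradiction by establishing the \emph{finite} comparison $\min_{x}\max_{i}f(x,y_i)\le w$.

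For the finite problem, set $C_y\equiv\{x\in X:f(x,y)\le w\}$ for $y\in Y$: this is closed (lower semicontinuity in $x$), convex (quasi-convexity in $x$), and nonempty (since $\min_{x}f(x,y)\le w$), and it suffices to show $\bigcap_{i=1}^{n}C_{y_i}\ne\emptyset$, since any common point witnesses $\min_{x}\max_{i}f(x,y_i)\le w$. Two structural facts drive this: (i) $C_y\ne\emptyset$ for \emph{every} $y$, in particular for every convex combination $y_\mu=\sum_{i}\mu_i y_i$ with $\mu$ in the simplex $\Delta^{n-1}$ (using convexity of $Y$); and (ii) quasi-concavity of $f(x,\cdot)$ gives $f(x,y_\mu)\ge\min_{i\in\mathrm{supp}(\mu)}f(x,y_i)$, so $C_{y_\mu}\subseteq\bigcup_{i\in\mathrm{supp}(\mu)}C_{y_i}$. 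Now suppose $\bigcap_{i}C_{y_i}=\emptyset$; then $\{X\setminus C_{y_i}\}_{i=1}^{n}$ is an open cover of $X$, and a continuous partition of unity $\{\psi_i\}$ subordinate to it defines $\sigma(x)\equiv(\psi_1(x),\dots,\psi_n(x))\in\Delta^{n-1}$. The correspondence $x\mapsto C_{y_{\sigma(x)}}$ has nonempty compact convex values, so a fixed point $x^*\in C_{y_{\sigma(x^*)}}$ (Kakutani--Fan--Glicksberg) lies, by (ii), in some $C_{y_i}$ with $\psi_i(x^*)>0$ -- contradicting that $\psi_i(x^*)>0$ forces $x^*\in X\setminus C_{y_i}$. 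The classical alternative avoids fixed points: induct on $n$ down to the two-point case, where for $z_t=(1-t)y_1+ty_2$ the sets $C_{z_t}$ are closed convex with $C_{z_0}\cap C_{z_1}=\emptyset$ and $C_{z_u}\subseteq C_{z_s}\cup C_{z_t}$ for $s\le u\le t$, and a connectedness argument on $t\in[0,1]$ forces some $C_{z_t}$ to be empty, i.e. $\min_{x}f(x,z_t)>c$.

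The one genuinely delicate point is reconciling the \emph{two different} semicontinuity hypotheses: $f$ is lower semicontinuous in $x$ but only upper semicontinuous in $y$, hence not jointly upper semicontinuous, so the limiting steps -- closedness of the graph of $x\mapsto C_{y_{\sigma(x)}}$, needed to apply the fixed-point theorem, or the behaviour of $t\mapsto C_{z_t}$ at the transition parameter in the inductive version -- must be arranged so that each semicontinuity is invoked on the variable where it actually holds; this is where the real work lies. Everything else (attainment of minima, the covering reduction, the induction on $n$, and the availability of partitions of unity and the fixed-point theorem, which in this paper's applications live on a compact subset of Euclidean/Hilbert space) is routine bookkeeping.
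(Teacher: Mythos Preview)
The paper does not prove this lemma at all; it is simply stated as the classical Sion minimax theorem and invoked as a black box (in Propositions~\ref{prop:compact_cgmt} and~\ref{prop:gordon_cost_F}, for instance). So there is no ``paper's own proof'' to compare against.

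Your sketch follows the standard route and is correct in broad outline. One remark: the fixed-point version you present first has the gap you yourself flag --- upper hemicontinuity of $x\mapsto C_{y_{\sigma(x)}}$ is not obvious under the mixed semicontinuity hypotheses (only upper semicontinuity in $y$), and you do not actually close it. The ``classical alternative'' you mention (induction down to $n=2$ and a connectedness argument along the segment $t\mapsto z_t=(1-t)y_1+ty_2$) is the clean way to finish: each $C_{z_t}$ is nonempty, closed, convex, hence connected, and lies in the disjoint union $C_{y_1}\cup C_{y_2}$, so it sits entirely on one side; tracking which side as $t$ varies (using lower semicontinuity in $x$ to show the ``$C_{y_1}$-side'' and ``$C_{y_2}$-side'' are both closed in $[0,1]$) forces a $t$ with $C_{z_t}=\emptyset$, contradicting $\min_x f(x,z_t)\le w$. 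That argument avoids fixed-point machinery and handles the semicontinuity mismatch without difficulty; it is essentially Sion's original proof (or Komiya's streamlined version).
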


\subsection{Further notation and definitions}

Recall that $G \in \R^{n\times p}$ has i.i.d. $\mathcal{N}(0,1)$ entries and $G_n= G/\sqrt{n}$. Let $g,h$ be independent standard Gaussian vectors in $\R^n$ and $\R^p$ respectively.  For simplicity, we shall assume that $G,g,h$ are defined on the same probability space and are mutually independent. Let
\begin{align}
\mathscr{E}_\pm\equiv \sup_{v \in T} \Big(\pm \pnorm{G_n v}{}^2 \mp \zeta\pnorm{v}{}^2\Big).
\end{align}
Let the primal and Gordon cost functions associated with $\mathscr{E}_\pm$ be
\begin{align}\label{def:gordon_cost_h_l}
\mathfrak{h}_\pm(u,v,w)&\equiv \pm\pnorm{w}{}^2 \mp \zeta\pnorm{v}{}^2 + \iprod{u}{G_n v-w},\nonumber\\
\mathfrak{l}_\pm(u,v,w)& \equiv n^{-1/2}\pnorm{v}{} \iprod{g}{u}+ n^{-1/2}\pnorm{u}{}\iprod{h}{v}\pm \pnorm{w}{}^2 \mp \zeta\pnorm{v}{}^2-\iprod{u}{w}.
\end{align}

\subsection{Proof outline for Theorem \ref{thm:square_process}}

We shall focus on the case $\mathscr{E}_+$ as the other case $\mathscr{E}_-$ shares a similar strategy. First we rewrite $\mathscr{E}_+$ into a max-min optimization problem:
\begin{align}\label{eqn:proof_sketch_1}
\mathscr{E}_+& = \sup_{v \in T} \big\{\pnorm{G_n v}{}^2- \zeta\pnorm{v}{}^2 \big\} = \sup_{v \in T,w \in \R^n}\inf_{u \in \R^n} \mathfrak{h}_+(u,v,w).
\end{align}
Now ideally we would like to apply the Gaussian min-max theorem  (cf. Theorem \ref{thm:CGMT}) to reduce the right hand side of (\ref{eqn:proof_sketch_1}) to the Gordon's max-min problem:
\begin{align}\label{eqn:proof_sketch_2}
\sup_{v \in T,w\in \R^n}\inf_{u \in \R^n} \mathfrak{h}_+(u,v,w) \stackrel{?}{\leq_{\mathbb{P}}} \sup_{v \in T,w\in \R^n}\inf_{u \in \R^n} \mathfrak{l}_+(u,v,w).
\end{align}
Let us assume that (\ref{eqn:proof_sketch_2}) can be justified properly, and proceed with computing the right hand side of the above display:
\begin{align}\label{eqn:proof_sketch_3}
&\sup_{v \in T,w \in \R^n}\inf_{u \in \R^n} \mathfrak{l}_+(u,v,w)\nonumber\\
&= \sup_{v \in T,w \in \R^n}\inf_{u \in \R^n}\bigg\{ \frac{1}{\sqrt{n}}\Big(\pnorm{v}{} \iprod{g}{u}+ \pnorm{u}{}\iprod{h}{v}\Big)-\iprod{u}{w} + \pnorm{w}{}^2-\zeta\pnorm{v}{}^2 \bigg\}\nonumber\\
& \stackrel{(\ast)}{=} \sup_{\substack{v \in T, w \in \R^n}}\inf_{\beta \geq 0} \bigg\{\frac{\beta}{\sqrt{n}} \iprod{h}{v}-\beta\cdot \biggpnorm{ \frac{1}{\sqrt{n}} \pnorm{v}{} g-w }{}+\pnorm{w}{}^2- \zeta\pnorm{v}{}^2   \bigg\}\nonumber\\
& \stackrel{(\ast\ast)}{=} \sup_{\substack{\alpha \in \Lambda_T, \\v \in T, \pnorm{v}{}=\alpha,  w \in \R^n}}\inf_{\beta \geq 0} \bigg\{\beta\Big(\langle h, v\rangle-\pnorm{ \alpha g-\sqrt{n}w }{}\Big)+ \pnorm{w}{}^2- \zeta\alpha^2  \bigg\}\nonumber\\
& = \sup_{\alpha \in \Lambda_T, w \in \R^n} \big\{ \pnorm{w}{}^2-\zeta\alpha^2 \big\}\quad \hbox{s.t. } \sup_{v \in T, \pnorm{v}{}=\alpha }\iprod{h}{v}\geq \pnorm{\alpha g-\sqrt{n}w}{}.
\end{align}
Here $(\ast)$ follows by taking the infimum over $u$ in the order $\inf_{\beta \geq 0}\inf_{\pnorm{u}{}=\beta}$, and $(\ast\ast)$ follows by rescaling $\beta$ and taking supremum over $v$ in the order $\sup_{\alpha \in \Lambda_T}\sup_{v \in T, \pnorm{v}{}=\alpha}$. 

From here, by writing $w= \alpha(g/\sqrt{n})+\tau r$ with $ \tau \in \big[0, n^{-1/2}\sup_{v \in T, \pnorm{v}{}=\alpha }\iprod{h}{v}\big]$ and $r \in \partial B_n$, the value of the optimization problem on the right hand side of (\ref{eqn:proof_sketch_3}) is the same as 
\begin{align*}
&\sup_{\alpha \in \Lambda_T, r \in \partial B_n, \tau} \Big\{ \bigpnorm{\alpha (g/\sqrt{n})+\tau r}{}^2-\zeta\alpha^2\Big\}\\
& = \sup_{\alpha \in \Lambda_T,  \sup_{ v \in T, \pnorm{v}{}=\alpha }\langle h, v\rangle\geq 0 } \bigg\{\bigg(\alpha\cdot \frac{\pnorm{g}{}}{\sqrt{n}}+\frac{1}{\sqrt{n}}\sup_{ v \in T, \pnorm{v}{}=\alpha }\langle h, v\rangle\bigg)^2-\zeta \alpha^2\bigg\} \\
& \leq  \sup_{\alpha \in \Lambda_T} \bigg\{\bigg(\alpha\cdot \frac{\pnorm{g}{}}{\sqrt{n}}+\frac{1}{\sqrt{n}}\sup_{ v \in T, \pnorm{v}{}=\alpha }\langle h, v\rangle\bigg)^2-\zeta \alpha^2\bigg\} \stackrel{(\triangle)}{\approx } E_{+,\zeta}(T),
\end{align*}
where in $(\triangle)$ we expect to use suitable concentration.

The major technical work in Section \ref{section:proof_main_all} below rigorously justifies the above informal calculations. In view of the Gaussian min-max Theorem \ref{thm:CGMT}, this unavoidably involves intricate compactification arguments with manageable error terms in both (\ref{eqn:proof_sketch_2}) and (\ref{eqn:proof_sketch_3}), and a sharp concentration inequality for $\mathscr{E}_\pm$. Using a similar technique, we will prove Theorem \ref{thm:sqrt_process} in Section \ref{section:proof_sqrt_process}.

\section{Proofs of Theorems \ref{thm:square_process} and \ref{thm:general_bound_sp_dT}}\label{section:proof_main_all}

\subsection{Some further notation}
Let
\begin{align}\label{def:err}
\err_n(T)\equiv \frac{r^2(T)}{\sqrt{n}}\sqrt{1\vee \abs{\zeta}\vee \frac{d(T)}{n}}.
\end{align}
We define
\begin{align}\label{def:F}
\mathsf{F}_{+}(h)&\equiv \sup_{\alpha \in \Lambda_T} \bigg\{\bigg(\alpha+ n^{-1/2}\sup_{v \in T, \pnorm{v}{}=\alpha} \iprod{h}{v} \bigg)^2-\zeta\alpha^2\bigg\},\nonumber\\
\mathsf{F}_{-}(h)&\equiv \sup_{\alpha \in \Lambda_T} \bigg\{\zeta\alpha^2-\bigg(\alpha- n^{-1/2}\sup_{v \in T, \pnorm{v}{}=\alpha} \iprod{h}{v} \bigg)_+^2\bigg\}.
\end{align}
Throughout this section, let for $x\geq 1$
\begin{align}\label{def:L_Delta}
L_w(x)&\equiv K_w\cdot r(T)\bigg(1+\sqrt{\abs{\zeta}}+\sqrt{ \frac{d(T)+x}{n} }\bigg),\nonumber\\
 \Delta(x)&\equiv K_w\cdot r^2(T)\cdot\sqrt{\frac{x}{n}}\cdot \bigg(1+\sqrt{\frac{d(T)+x}{n}}\bigg),
\end{align}
where $K_w>0$ is a large enough absolute constant.

\subsection{Reduction via Gaussian min-max theorem}\label{section:reduction_cgmt}
Consider the `high probability' event
\begin{align}\label{def:event_E}
E(x)&\equiv \bigg\{ \sup_{v \in T}\pnorm{G v}{}\leq C\cdot r(T)\big(\sqrt{n}+\sqrt{ d(T) }+\sqrt{x} \bigg)\bigg\} \cap \bigg\{ \bigabs{\pnorm{g}{}-\sqrt{n}}\leq C\sqrt{x} \bigg\}\nonumber\\
&\qquad \cap \bigg\{\sup_{v \in T}\abs{\iprod{h}{v}}\leq C\cdot r(T)\big(\sqrt{d(T)}+\sqrt{x}\big)\bigg\},
\end{align}
the precise sense of which is quantified in the following proposition. 

\begin{proposition}\label{prop:high_prob_E}
There exists a large enough absolute constant $C>0$ in the definition of $E(x)$ such that for any $x\geq 1$, $\Prob(E(x))\geq 1- e^{-x}$.
\end{proposition}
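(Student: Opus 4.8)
The goal is to prove Proposition \ref{prop:high_prob_E}: the event $E(x)$, which is the intersection of three events, has probability at least $1-e^{-x}$ for any $x\geq 1$, provided the absolute constant $C$ in the definition is large enough. The plan is to bound the probability of the complement of each of the three events separately by $e^{-x}/3$ (or simply $\tfrac13 e^{-x}$, absorbing the constant $3$ into $C$), and then apply a union bound.

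First I would handle the middle event $\{|\pnorm{g}{}/\sqrt n - 1|\leq C\sqrt{x/n}\}$. Since $g\sim\mathcal N(0,I_n)$, the map $g\mapsto \pnorm{g}{}$ is $1$-Lipschitz, so by Gaussian concentration $\Prob(|\pnorm{g}{}-\E\pnorm{g}{}|\geq t)\leq 2e^{-t^2/2}$; combined with $\sqrt n - 1/\sqrt n\wedge\cdots\le \E\pnorm{g}{}\leq \sqrt n$ (equivalently $|\E\pnorm{g}{}-\sqrt n|\leq 1$, via $\E\pnorm{g}{}^2=n$ and Jensen plus a variance bound), taking $t\asymp\sqrt x$ gives the claim for $C$ large, since $x\geq 1$ makes the additive $1$ absorbable. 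Next, the third event $\{\pnorm{\Sigma^{1/2}h}{}\leq C\pnorm{\Sigma}{\op}^{1/2}(\sqrt{\mathsf r(\Sigma)}+\sqrt x)\}$ is essentially the same computation: $h\mapsto\pnorm{\Sigma^{1/2}h}{}$ is $\pnorm{\Sigma^{1/2}}{\op}=\pnorm{\Sigma}{\op}^{1/2}$-Lipschitz, and $\E\pnorm{\Sigma^{1/2}h}{}\leq (\E\pnorm{\Sigma^{1/2}h}{}^2)^{1/2}=\sqrt{\tr\Sigma}=\pnorm{\Sigma}{\op}^{1/2}\sqrt{\mathsf r(\Sigma)}$, so Gaussian concentration with deviation $t\asymp\pnorm{\Sigma}{\op}^{1/2}\sqrt x$ yields $\Prob(\pnorm{\Sigma^{1/2}h}{}\geq \pnorm{\Sigma}{\op}^{1/2}(\sqrt{\mathsf r(\Sigma)}+c\sqrt x))\leq e^{-x}$ for a suitable constant.

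The first event, $\{\pnorm{\hat\Sigma}{\op}\leq C\pnorm{\Sigma}{\op}(1+\sqrt{\mathsf r(\Sigma)/n}+\sqrt{x/n})^2\}$, is the one requiring an external input rather than a bare Gaussian concentration argument. Here I would invoke the Koltchinskii--Lounici-type bound: writing $\hat\Sigma = \Sigma^{1/2}(G_n^\top G_n)\Sigma^{1/2}$ with $G_n$ having i.i.d. $\mathcal N(0,1/n)$ entries, one has $\pnorm{\hat\Sigma}{\op}\leq \pnorm{\Sigma}{\op}+\pnorm{\hat\Sigma-\Sigma}{\op}$, and the concentration inequality of \cite{koltchinskii2017concentration} gives that with probability at least $1-e^{-x}$, $\pnorm{\hat\Sigma-\Sigma}{\op}/\pnorm{\Sigma}{\op}\lesssim \sqrt{\mathsf r(\Sigma)/n}+\mathsf r(\Sigma)/n+\sqrt{x/n}+x/n$, which is dominated by $(1+\sqrt{\mathsf r(\Sigma)/n}+\sqrt{x/n})^2$ up to an absolute constant; adding back the $\pnorm{\Sigma}{\op}$ term and enlarging $C$ absorbs it. (Alternatively one could cite the cruder deviation bound for the largest singular value of a Gaussian matrix with an $\ell_2$-ball Gaussian width argument, but the cited Koltchinskii--Lounici estimate is cleanest and already in scope.)

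Finally, a union bound over the three complements gives $\Prob(E(x)^c)\leq 3\cdot\tfrac13 e^{-x}=e^{-x}$ once each individual bound is arranged to be at most $\tfrac13 e^{-x}$ — which is achieved by replacing $x$ by $x+\log 3$ in the Gaussian concentration estimates and noting the resulting constants can be absorbed into $C$ since $x\geq 1$. I do not anticipate any serious obstacle here: the only non-elementary ingredient is the spectral-norm deviation bound for $\hat\Sigma$, and that is exactly the kind of result the paper is building on and may cite directly. The mild bookkeeping points are (i) keeping the additive constants ($|\E\pnorm{g}{}-\sqrt n|\leq 1$ and the like) under control using $x\geq 1$, and (ii) making sure the constant $C$ appearing in $E(x)$ is chosen after, and large enough relative to, all the constants produced in the three sub-arguments.
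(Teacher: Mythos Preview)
Your proposal is correct and follows the same overall structure as the paper's proof: bound each of the three constituent events separately and take a union bound, with the first event handled via the Koltchinskii--Lounici concentration inequality and the second via standard Gaussian Lipschitz concentration for $g\mapsto\pnorm{g}{}$.

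The only place you diverge is on the third event: you use that $h\mapsto\pnorm{\Sigma^{1/2}h}{}$ is $\pnorm{\Sigma}{\op}^{1/2}$-Lipschitz and apply Gaussian concentration directly, whereas the paper instead applies the Hanson--Wright inequality to the quadratic form $h^\top\Sigma h$ and then takes a square root. Your route is slightly more elementary here, since it exploits the exact Gaussianity of $h$ rather than the more general sub-Gaussian machinery; the Hanson--Wright argument would survive under weaker tail assumptions but is unnecessary in the present Gaussian setting. Either way the conclusion and the resulting constants are of the same form.
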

\begin{proof}
Let $E_1(x),E_2(x),E_3(x)$ be the events in the definition of $E(x)$, namely
\begin{align}\label{ineq:high_prob_E_0}
E_1(x)& = \bigg\{ \sup_{v \in T}\pnorm{G v}{}\leq C\cdot r(T)\big(\sqrt{n}+\sqrt{ d(T) }+\sqrt{x} \bigg)\bigg\},\nonumber\\
E_2(x)& = \bigg\{ \bigabs{\pnorm{g}{}-\sqrt{n}}\leq C\sqrt{x} \bigg\},\nonumber\\
E_3(x)& = \bigg\{\sup_{v \in T}\abs{\iprod{h}{v}}\leq C\cdot r(T)\big(\sqrt{d(T)}+\sqrt{x}\big)\bigg\}.
\end{align}	
We shall prove that $\Prob(E_i(x))\geq 1-e^{-x}$ for $i=1,2,3$. 

\noindent (\textbf{$E_1(x)$ term}). We first derive an estimate for $\E \sup_{v \in T}\pnorm{G v}{}=\E \sup_{u \in B_n, v \in T}\iprod{u}{Gv}$ via the standard Slepian-Fernique Gaussian comparison inequality. Let 
\begin{align*}
X(u,v)\equiv \iprod{u}{Gv},\quad Y(u,v)\equiv \sqrt{2}\cdot \big(r(T)\iprod{g}{u}+\iprod{h}{v}\big).
\end{align*}
Then for $(u,v),(u',v') \in B_n\times T$,
\begin{align*}
&\E\big(X(u,v)-X(u',v')\big)^2 = \E\big(\iprod{u}{Gv}-\iprod{u'}{Gv'}\big)^2 \\
&= \E \bigg(\sum_{i \in [n], j \in [p]} G_{ij}(u_iv_j-u_i'v_j')\bigg)^2 = \sum_{i \in [n], j \in [p]} (u_iv_j-u_i'v_j')^2 \\
&\leq 2\cdot \big(r^2(T)\pnorm{u-u'}{}^2+\pnorm{v-v'}{}^2\big),
\end{align*}
and
\begin{align*}
\E\big(Y(u,v)-Y(u',v')\big)^2 &= 2 \E\big(r(T)\iprod{g}{u-u'}+\iprod{h}{v-v'} \big)^2\\
& =  2\cdot \big(r^2(T)\pnorm{u-u'}{}^2+\pnorm{v-v'}{}^2\big). 
\end{align*}
This means for $(u,v),(u',v') \in B_n\times T$,
\begin{align*}
\E\big(X(u,v)-X(u',v')\big)^2 = \E\big(\iprod{u}{Gv}-\iprod{u'}{Gv'}\big)^2\leq \E\big(Y(u,v)-Y(u',v')\big)^2,
\end{align*}
and so the classical Slepian-Fernique Gaussian comparison inequality (cf. \cite[Theorem 7.2.11]{vershynin2018high}) yields that
\begin{align}\label{ineq:high_prob_E_1}
\E \sup_{v \in T}\pnorm{G v}{} &= \E \sup_{u \in B_n, v \in T} X(u,v)\leq \E \sup_{u \in B_n, v \in T} Y(u,v)\nonumber\\
& =\sqrt{2}\Big(r(T) \E \pnorm{g}{}+ \E \sup_{v \in T} \iprod{h}{v}\Big)\leq C\cdot r(T)\big(\sqrt{n}+\sqrt{d(T)}\big).
\end{align}
Next by viewing $G\mapsto \sup_{v \in T}\pnorm{G v}{}$ as a $r(T)$-Lipschitz map on $\R^{n\times p}$, using Gaussian concentration inequality we have with probability $1-e^{-x}$,
\begin{align}\label{ineq:high_prob_E_2}
\sup_{v \in T}\pnorm{G v}{}\leq \E \sup_{v \in T}\pnorm{G v}{}+ C\cdot r(T)\sqrt{x}.
\end{align}
Combining (\ref{ineq:high_prob_E_1})-(\ref{ineq:high_prob_E_2}) shows that $\Prob(E_1(x))\geq 1-e^{-x}$.

\noindent (\textbf{$E_2(x)$ term}). As $g \mapsto \pnorm{g}{}$ is $1$-Lipschitz, we have by Gaussian concentration, with probability at least $1-e^{-x}$,
\begin{align}\label{ineq:high_prob_E_3}
\bigabs{\pnorm{g}{}-\E \pnorm{g}{}}\leq \sqrt{x}. 
\end{align}
On the other hand, by Gaussian-Poincar\'e inequality, $(\E \pnorm{g}{})^2\leq \E \pnorm{g}{}^2\leq (\E \pnorm{g}{})^2+1$, which implies
\begin{align}\label{ineq:high_prob_E_4}
\bigabs{\E\pnorm{g}{}-\sqrt{n}}\leq \sqrt{n}-\sqrt{n-1}\leq 1. 
\end{align}
The claim for $E_2(x)$ now follows by combining (\ref{ineq:high_prob_E_3})-(\ref{ineq:high_prob_E_4}), upon noting $x\geq 1$ and possibly adjusting constants.

\noindent (\textbf{$E_3(x)$ term}). As $h\mapsto \sup_{v \in T}\abs{\iprod{h}{v}}$ is $r(T)$-Lipschitz, by Gaussian concentration, with probability at least $1-e^{-x}$,
\begin{align*}
\sup_{v \in T}\abs{\iprod{h}{v}}\leq w(T)+ C\cdot r(T)\sqrt{x} \leq C\cdot r(T)\big(\sqrt{d(T)}+\sqrt{x}\big),
\end{align*}
as desired.
\end{proof}

Next we quantify the error of compactification for $\mathscr{E}_\pm$.

\begin{proposition}\label{prop:primal_cost_err}
Suppose $0 \in T$. For $L_w(x)$ defined in (\ref{def:L_Delta}), and $L_u>1$, let 
\begin{align}\label{def:rem}
\rem(x,L_u)\equiv 2\sqrt{n} L_w^3(x)/L_u+\big(\sqrt{n}L_w^2(x)/L_u\big)^2.
\end{align}
Then on the event $E(x)$,
\begin{align*}
&\bigabs{\mathscr{E}_\pm - \sup_{\substack{v \in T, w \in B_n(L_w(x))}}\inf_{u \in B_n(L_u)}\mathfrak{h}_\pm(u,v,w)}\leq  \rem(x,L_u).
\end{align*}
\end{proposition}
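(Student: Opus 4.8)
The plan is to establish the claimed two-sided bound by working with $\lambda_+$; the case $\lambda_-$ is entirely parallel after swapping signs. Starting from the identity \eqref{eqn:proof_sketch_1}, namely
\begin{align*}
\lambda_+\big(\hat{\Sigma}-\Sigma\big)= \sup_{v \in B_p,w \in \R^n}\inf_{u \in \R^n}\mathfrak{h}_+(u,v,w),
\end{align*}
I would quantify the error incurred by replacing the unbounded regions $w \in \R^n$ and $u \in \R^n$ with the balls $B_n(L_w(x))$ and $B_n(L_u)$. The first reduction (truncating $w$) is essentially exact: at the unconstrained optimum over $u$ the inner infimum forces $w = G_n\Sigma^{1/2}v$ (so that the linear term $\iprod{u}{G_n\Sigma^{1/2}v - w}$ can be driven to $-\infty$ unless $w$ equals $G_n\Sigma^{1/2}v$), hence the optimal $w$ satisfies $\pnorm{w}{} = \pnorm{G_n\Sigma^{1/2}v}{} \le \pnorm{\hat{\Sigma}}{\op}^{1/2}\pnorm{\Sigma}{\op}^{1/2}$, which on the event $E(x)$ is $\le L_w(x)$ by the choice of the constant $K_w$ in \eqref{def:L_Delta}. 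So restricting $w$ to $B_n(L_w(x))$ changes nothing on $E(x)$.

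The substantive part is the truncation of $u$ to $B_n(L_u)$. Here I would proceed as follows. Write
\begin{align*}
\mathsf{val}_+ \equiv \sup_{v\in B_p, w\in B_n(L_w(x))}\inf_{u\in \R^n}\mathfrak{h}_+(u,v,w), \qquad
\mathsf{val}_+^{L_u} \equiv \sup_{v\in B_p, w\in B_n(L_w(x))}\inf_{u\in B_n(L_u)}\mathfrak{h}_+(u,v,w).
\end{align*}
Since $B_n(L_u)\subset \R^n$, always $\mathsf{val}_+\le \mathsf{val}_+^{L_u}$, so only an upper bound on $\mathsf{val}_+^{L_u}-\mathsf{val}_+$ is needed. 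Fix any $v,w$ in the (truncated) feasible set and consider $\phi(u)\equiv \mathfrak{h}_+(u,v,w)= \pnorm{w}{}^2 - \pnorm{\Sigma^{1/2}v}{}^2 + \iprod{u}{z}$ with $z\equiv G_n\Sigma^{1/2}v - w$. This is linear in $u$, so $\inf_{u\in\R^n}\phi(u) = -\infty$ if $z\ne 0$ and equals $\pnorm{w}{}^2-\pnorm{\Sigma^{1/2}v}{}^2$ if $z=0$; while $\inf_{u\in B_n(L_u)}\phi(u) = \pnorm{w}{}^2-\pnorm{\Sigma^{1/2}v}{}^2 - L_u\pnorm{z}{}$. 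Thus the gain from truncation, for a given $(v,w)$, is at most $L_u\pnorm{z}{} = L_u\pnorm{G_n\Sigma^{1/2}v - w}{}$, but this is only useful when it is small, which happens near the optimum. The correct route is instead: take a near-maximizer $(v^\ast,w^\ast)$ of $\mathsf{val}_+^{L_u}$ and show that the associated residual $\pnorm{G_n\Sigma^{1/2}v^\ast - w^\ast}{}$ must be small — of order $L_w^2(x)/L_u$ — because otherwise the inner term $-L_u\pnorm{z}{}$ would already make $\phi$ more negative than the trivial lower bound obtained by plugging $w = G_n\Sigma^{1/2}v$ (which gives value $\ge \pnorm{G_n\Sigma^{1/2}v}{}^2 - \pnorm{\Sigma^{1/2}v}{}^2 \ge -\pnorm{\Sigma}{\op} \ge -L_w^2(x)$, say). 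Consequently $\pnorm{z}{} \lesssim L_w^2(x)/L_u$ at the near-optimum, and then $\mathfrak{h}_+$ evaluated at this $(v^\ast,w^\ast)$ with $w$ replaced by $G_n\Sigma^{1/2}v^\ast$ differs from $\mathsf{val}_+^{L_u}$ by at most a quantity controlled by $\pnorm{z}{}$, $\pnorm{w^\ast}{}\le L_w(x)$, and $\pnorm{G_n\Sigma^{1/2}v^\ast}{}\le L_w(x)$ — the three sources combine through $|\pnorm{w^\ast}{}^2 - \pnorm{G_n\Sigma^{1/2}v^\ast}{}^2| = |\pnorm{w^\ast}{}-\pnorm{G_n\Sigma^{1/2}v^\ast}{}|\cdot|\pnorm{w^\ast}{}+\pnorm{G_n\Sigma^{1/2}v^\ast}{}| \le \pnorm{z}{}\cdot 2L_w(x) \lesssim L_w^3(x)/L_u$, plus a second-order term of size $\pnorm{z}{}^2 \lesssim (L_w^2(x)/L_u)^2$. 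Matching these against the definition \eqref{def:rem} of $\rem(x,L_u)$ — which is precisely $2\sqrt n L_w^3(x)/L_u + (\sqrt n L_w^2(x)/L_u)^2$, the extra $\sqrt n$ factors coming from tracking the $n^{-1/2}$ normalization inside $G_n$ versus $G$ carefully in the residual estimate — yields the claimed bound on $E(x)$.

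The main obstacle I anticipate is handling the linearity/non-coercivity of $\mathfrak{h}_+$ in $u$ correctly: because $u\mapsto \mathfrak{h}_+(u,v,w)$ is affine, the inner infimum over $\R^n$ is a hard $\{-\infty\}$ indicator of the constraint $G_n\Sigma^{1/2}v = w$, and one must be careful that the $\sup$-over-$(v,w)$ only ``sees'' the feasible slice; the truncated problem smooths this into a penalty $-L_u\pnorm{G_n\Sigma^{1/2}v-w}{}$, and the entire argument hinges on showing the optimal residual in the truncated problem is $O(L_w^2(x)/L_u)$, which requires producing an explicit competitor (the choice $w=G_n\Sigma^{1/2}v$) with a value lower-bounded uniformly on $E(x)$. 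A secondary technical point is that the same bound must hold for $\lambda_-$ with the signs in $\mathfrak{h}_-$ flipped; there $w\mapsto \mathfrak{h}_-(u,v,w)$ is concave (coefficient $-\pnorm{w}{}^2$), so the interplay with $\sup_w$ is benign and in fact the $w$-truncation argument is even cleaner, but one should double-check that the residual-control step goes through with the sign change, which it does since it only uses $|\iprod{u}{G_n\Sigma^{1/2}v-w}|$ and the uniform competitor bound. Everything else — plugging into $E(x)$, reading off constants — is bookkeeping.
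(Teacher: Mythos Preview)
Your approach is correct and matches the paper's argument closely: restrict $w$ using the saddle-point identity $w=G_n\Sigma^{1/2}v$ on $E(x)$, then control the error from truncating $u$ by bounding the residual $z=G_n\Sigma^{1/2}v^\ast-w^\ast$ at the truncated optimizer and comparing against $\lambda_+$ via the competitor $v=v^\ast$.

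Two small corrections. First, your bound $\pnorm{w}{}\le \pnorm{\hat{\Sigma}}{\op}^{1/2}\pnorm{\Sigma}{\op}^{1/2}$ has a stray factor: since $\pnorm{G_n\Sigma^{1/2}v}{}^2=v^\top\hat{\Sigma}v\le \pnorm{\hat{\Sigma}}{\op}$ for $v\in B_p$, one gets $\pnorm{w}{}\le \pnorm{\hat{\Sigma}}{\op}^{1/2}$ directly, which on $E(x)$ is $\le L_w(x)$. Second, your explanation of the $\sqrt{n}$ in $\rem(x,L_u)$ is off. Your direct $\ell_2$ argument (via $\inf_{u\in B_n(L_u)}\iprod{u}{z}=-L_u\pnorm{z}{}$ and the lower bound $\mathsf{val}_+^{L_u}\ge 0$) actually yields the \emph{stronger} residual control $\pnorm{z}{}\lesssim L_w^2(x)/L_u$ with no $\sqrt{n}$, and hence an error $\lesssim L_w^3(x)/L_u+(L_w^2(x)/L_u)^2$ that already implies the stated $\rem(x,L_u)$. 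The paper instead bounds $\pnorm{z}{\infty}\le L_w^2(x)/L_u$ coordinatewise and then passes to $\pnorm{z}{}\le \sqrt{n}\,L_w^2(x)/L_u$; that $\ell_\infty\!\to\!\ell_2$ conversion is the true source of the $\sqrt{n}$, not the $G_n$ normalization. Since $L_u\to\infty$ later anyway, the looser constant is immaterial, but your route is in fact cleaner here.
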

\begin{proof}
First note that
\begin{align}\label{ineq:gordon_cost_0}
\mathscr{E}_\pm& = \sup_{v \in T} \Big\{ \pm\pnorm{G_n v}{}^2 \mp \zeta\pnorm{v}{}^2 \Big\} = \sup_{v \in T,w \in \R^n}\inf_{u \in \R^n} \mathfrak{h}_\pm(u,v,w).
\end{align}
Any pair of maximizers $(\tilde{v}_1,\tilde{w}_1)$ of the above max-min problem must satisfy $\tilde{w}_1=G_n\tilde{v}_1$, so on $E(x)$, $
\pnorm{\tilde{w}_1}{}\leq \pnorm{G_n \tilde{v}_1}{}\leq L_w(x)$. This means on $E(x)$, for any $L_u >1$, 
\begin{align}\label{ineq:gordon_cost_4}
\mathscr{E}_\pm&= \sup_{v \in T,w \in B_n(L_w(x))}\inf_{u \in \R^n}\quad \mathfrak{h}_\pm(u,v,w)\nonumber\\
&\leq \sup_{v \in T,w \in B_n(L_w(x))}\inf_{u \in B_n(L_u)}\Big\{\pm\pnorm{w}{}^2 \mp \zeta\pnorm{v}{}^2 + \iprod{u}{G_n v-w}\Big\}.
\end{align}
Any pair of maximizers $(\tilde{v}_2,\tilde{w}_2)$ of the above max-min problem must satisfy
\begin{align*}
\pnorm{G_n \tilde{v}_2-\tilde{w}_2}{\infty}\leq L_u^{-1}\big(\pnorm{\tilde{w}_2}{}^2+\zeta\pnorm{\tilde{v}_2}{}^2\big)\leq  L_w^2(x)/L_u,
\end{align*}
 otherwise the cost optimum would be non-positive, contradicting to $\mathscr{E}_\pm\geq 0$ due to $0\in T$. This means that on $E(x)$,
\begin{align}\label{ineq:gordon_cost_1}
 &\hbox{RHS of (\ref{ineq:gordon_cost_4})} = \sup_{\substack{v \in T,w \in B_n(L_w(x)),\\ \pnorm{G_nv-w}{\infty}\leq L_w^2(x)/L_u }}\inf_{u \in B_n(L_u)}\Big\{\pm\pnorm{w}{}^2 \mp \zeta\pnorm{v}{}^2 + \iprod{u}{G_n v-w}\Big\}\nonumber\\
 & = \sup_{\substack{v \in T,w \in B_n(L_w(x)),\\ \pnorm{G_nv-w}{\infty}\leq L_w^2(x)/L_u }}\inf_{u \in B_n(L_u)}\Big\{ \pm \pnorm{G_n v}{}^2\mp \zeta\pnorm{v}{}^2 + \mathsf{L}_\pm(u,v,w)\Big\},
\end{align} 
where
\begin{align*}
\mathsf{L}_\pm(u,v,w)\equiv \pm 2\iprod{ G_n v}{w-G_n v} \pm \pnorm{w-G_n v}{}^2 + \iprod{u}{G_n v-w}.
\end{align*}
By choosing $u=0$ in the infimum, and using the bounds
\begin{align*}
\bigabs{\iprod{ G_n v}{w-G_nv}}&\leq L_w(x)\pnorm{w-G_nv}{}\leq \sqrt{n} L_w^3(x)/L_u,\\
\pnorm{w-G_nv}{}^2&\leq  \big(\sqrt{n}L_w^2(x)/L_u\big)^2
\end{align*}
under the prescribed $\ell_\infty$ constraint $\pnorm{G_n v-w}{\infty}\leq L_w^2(x)/L_u$, we conclude that on the event $E(x)$, 
\begin{align}\label{ineq:gordon_cost_3}
\hbox{RHS of (\ref{ineq:gordon_cost_1})}&\leq \sup_{v \in T} \Big\{ \pm\pnorm{G_n v}{}^2 \mp \zeta\pnorm{v}{}^2 \Big\}+ \rem(x,L_u) \nonumber\\
&= \mathscr{E}_\pm + \rem(x,L_u). 
\end{align}
Here the last identity uses (\ref{ineq:gordon_cost_0}). Combining (\ref{ineq:gordon_cost_4})-(\ref{ineq:gordon_cost_3}) we conclude the desired inequality. 
\end{proof}

The following proposition is a direct consequence of the Gaussian min-max Theorem \ref{thm:CGMT}.

\begin{proposition}\label{prop:compact_cgmt}
Let $L_w(x)$ be defined in (\ref{def:L_Delta}) and $L_u>1$. For any $z \in \R$, 
\begin{align*}
\Prob\bigg(\sup_{\substack{v \in T, w \in B_n(L_w(x))}}\inf_{u \in B_n(L_u)}\mathfrak{h}_\pm(u,v,w)\geq z\bigg)  \leq 2\Prob\bigg( \sup_{\substack{v \in T, w \in B_n(L_w(x))}}\inf_{u \in B_n(L_u)}\mathfrak{l}_\pm(u,v,w)\geq z\bigg).
\end{align*}
\end{proposition}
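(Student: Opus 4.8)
The plan is to establish Proposition \ref{prop:compact_cgmt} as a direct application of the two-sided Convex Gaussian Min-Max Theorem (Theorem \ref{thm:CGMT}), after setting up the correct dictionary between $(\mathfrak{h}_\pm,\mathfrak{l}_\pm)$ in (\ref{def:gordon_cost_h_l}) and $(\Phi^{\textrm{p}},\Phi^{\textrm{a}})$. Concretely, I would take the CGMT max variable to be $(\Sigma^{1/2}v,w)$ ranging over the compact convex set $\Sigma^{1/2}(B_p)\times B_n(L_w(x))$ --- which is in bijection with $(v,w)\in B_p\times B_n(L_w(x))$ through the linear map $v\mapsto\Sigma^{1/2}v$ --- with $G$-interacting block $\Sigma^{1/2}v\in\R^p$, and the CGMT min variable to be $u/\sqrt n$ ranging over the compact convex ball $B_n(L_u/\sqrt n)$. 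Writing $\iprod{u}{G_n\Sigma^{1/2}v}=(\Sigma^{1/2}v)^\top(\sqrt n\,G_n^\top)(u/\sqrt n)$ exhibits the bilinear part as $u_1^\top G\,v_1$ against the $p\times n$ standard Gaussian matrix $G:=\sqrt n\,G_n^\top$ (independent of $g,h$), while the remainder $\pm\pnorm{w}{}^2\mp\pnorm{\Sigma^{1/2}v}{}^2-\iprod{u}{w}$ is a continuous $Q$ on the product domain. Matching the two independent auxiliary Gaussian vectors of Theorem \ref{thm:CGMT} (of dimensions $p$ and $n$) with $h$ and $g$ respectively, one checks that the resulting $\Phi^{\textrm{p}}$ and $\Phi^{\textrm{a}}$ are precisely the compactified max-min of $\mathfrak{h}_\pm$ and of $\mathfrak{l}_\pm$.

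With this identification, the half of Proposition \ref{prop:compact_cgmt} corresponding to part (1) of Theorem \ref{thm:CGMT} is immediate, since part (1) requires nothing beyond continuity of $Q$ on compact sets. The other half corresponds to part (2), and this is where the work lies: part (2) presupposes that $(u,(v,w))\mapsto u_1^\top G\,v_1+Q$ obeys Sion's theorem (Lemma \ref{lem:sion_minmax}) on the two domains, which it does not as written --- in the "$+$" case $Q$ is concave in $v$ but \emph{convex} in the maximization variable $w$, and symmetrically in the "$-$" case $Q$ is convex in the maximization variable $v$, so Sion's concavity hypothesis fails for $\mathfrak{h}_\pm$ directly.

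The remedy is the "further manipulation of $\mathfrak{h}_\pm$" flagged in Section \ref{section:proof_outline}. For the "$+$" case one replaces the offending square by its variational form $\pnorm{w}{}^2=\sup_{w_0\in B_n(L_w(x))}\big(2\iprod{w_0}{w}-\pnorm{w_0}{}^2\big)$, exact on $\pnorm{w}{}\le L_w(x)$; interchanges the new $\sup_{w_0}$ with the inner $\inf_u$ --- legitimate by Sion's theorem applied to the $(w_0,u)$ pair, the lifted objective being concave in $w_0$ and affine in $u$ on the two compact convex balls --- and then carries out the now-\emph{affine} maximization over $w$ explicitly. Using that $L_u$ is taken large (the regime in which $\rem(x,L_u)$ of (\ref{def:rem}) is small), these inner optimizations collapse the problem to an equivalent compactified max-min $\sup_{v}\inf_u\widehat{\mathfrak{h}}_+$ whose residual cost is genuinely concave--convex, so that part (2) of Theorem \ref{thm:CGMT} applies. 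Since $\mathfrak{l}_\pm$ has exactly the same $w$-dependence $\pm\pnorm{w}{}^2-\iprod{u}{w}$ as $\mathfrak{h}_\pm$, the identical reduction turns $\sup_{v,w}\inf_u\mathfrak{l}_\pm$ into $\sup_v\inf_u\widehat{\mathfrak{l}}_+$, so the CGMT comparison between $\widehat{\mathfrak{h}}_+$ and $\widehat{\mathfrak{l}}_+$ transfers back to the claimed inequality; the "$-$" case is handled the same way with $v$ and $w$ interchanged. The step I expect to be the main obstacle is showing that each of these reformulations preserves the value of the compactified max-min \emph{exactly} rather than only up to a one-sided inequality --- a naive interchange of $\sup_w$ and $\inf_u$ genuinely fails here, and it is precisely the global structure of the problem, together with the largeness of $L_u$, that has to be exploited to turn the reduction into an identity.
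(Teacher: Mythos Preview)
You correctly pinpoint the obstacle --- the convexity of $w\mapsto\mathfrak h_+(u,v,w)$ (resp.\ $v\mapsto\mathfrak h_-(u,v,w)$) is incompatible with the outer maximum, so part (2) of Theorem~\ref{thm:CGMT} does not apply directly --- and you have the right instinct that the remedy is to linearize the offending square via a Fenchel dual variable. But your specific execution diverges from the paper in two ways that create real problems.

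First, the paper lifts \emph{both} quadratics, writing $\pnorm{w}{}^2=\sup_{s}\{2\iprod{w}{s}-\pnorm{s}{}^2\}$ and $\pnorm{\Sigma^{1/2}v}{}^2=\sup_{t}\{2\langle\Sigma^{1/2}v,t\rangle-\pnorm{t}{}^2\}$ (with $s\in B_n(L_w(x))$, $t\in\Sigma^{1/2}(B_p)$ restricted for free), and applies CGMT to the enlarged problem $\sup_{v,w,s}\inf_{u,t}\bar{\mathfrak h}_+$ (for $\mathfrak h_-$ the roles of $s$ and $t$ swap). No variable is eliminated; the lifted problem stays in max--min form and CGMT is invoked there. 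The return to $\mathfrak l_\pm$ on the Gordon side is then an exact identity, because in $\bar{\mathfrak l}_+$ the variable $s$ separates from $(u,t)$, so $\sup_s$ can simply be moved inside to collapse back to $\pnorm{w}{}^2$ (and likewise for $t$ in $\bar{\mathfrak l}_-$). The comparison thus incurs no error and holds for every $L_u>1$.

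Second, your plan to ``carry out the now-affine maximization over $w$ explicitly'' and collapse to a reduced $\sup_v\inf_u\widehat{\mathfrak h}_+$ runs into trouble: the explicit sup of $\iprod{2w_0-u}{w}$ over $w\in B_n(L_w(x))$ is $L_w(x)\,\pnorm{2w_0-u}{}$, which is convex (not concave) in the max variable $w_0$, so the reduced problem is still not concave--convex and part (2) of Theorem~\ref{thm:CGMT} still does not apply. Nor is ``largeness of $L_u$'' available here --- Proposition~\ref{prop:compact_cgmt} is stated and used for arbitrary $L_u>1$; the limit $L_u\to\infty$ is taken only \emph{after} this comparison has been established (see the proof of Theorem~\ref{thm:exp_rough_bound}), and the size condition $L_u>4L_w(x)$ enters only in Proposition~\ref{prop:gordon_cost_F}. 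The concern you flag at the end --- that each reformulation must preserve the compactified max--min value exactly --- is precisely what the paper's two-variable lift sidesteps: by never leaving the max--min framework, the identities on both the primal and Gordon sides are structural, not contingent on $L_u$.
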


Now we will relate Gordon costs to the $\mathsf{F}_\pm$ functions defined in (\ref{def:F}). 

\begin{proposition}\label{prop:gordon_cost_F}
Suppose $0 \in T$. Let $L_w(x), \Delta(x)$ be defined as in (\ref{def:L_Delta}) with $K_w$ being a large enough absolute constant. Suppose
\begin{align*}
L_u > \max \big\{1,4L_w(x)\}.
\end{align*}
Then on the event $E(x)$,
\begin{align*}
\sup_{\substack{v \in T,w \in B_n(L_w(x))}}\inf_{u \in B_n(L_u)}\quad \mathfrak{l}_\pm(u,v,w)\leq \mathsf{F}_\pm(h)+ \Delta(x).
\end{align*}
\end{proposition}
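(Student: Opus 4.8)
The plan is to make the heuristic computation in (2.4) rigorous with a quantitative error, the key simplification being to eliminate $u$ \emph{exactly} at the outset, so that no genuine interchange of $\sup$ and $\inf$ is ever required. Fix $v\in B_p$ and $w\in\R^n$, and write $\alpha\equiv\pnorm{\Sigma^{1/2}v}{}\in[0,\pnorm{\Sigma}{\op}^{1/2}]$. The $u$-dependent part of $\mathfrak{l}_\pm(u,v,w)$ is $\iprod{u}{n^{-1/2}\alpha g-w}+n^{-1/2}\pnorm{u}{}\iprod{h}{\Sigma^{1/2}v}$; minimizing over the direction of $u$ at fixed $\pnorm{u}{}=\beta$ and then over $\beta\in[0,L_u]$ gives the closed form
\[
\inf_{u\in B_n(L_u)}\mathfrak{l}_\pm(u,v,w)=\pm\pnorm{w}{}^2\mp\alpha^2-L_u\Big(\pnorm{n^{-1/2}\alpha g-w}{}-n^{-1/2}\iprod{h}{\Sigma^{1/2}v}\Big)_+.
\]

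Next I would eliminate $v$. The right-hand side above is non-decreasing in $\iprod{h}{\Sigma^{1/2}v}$, the map $v\mapsto\pnorm{\Sigma^{1/2}v}{}$ takes every value in $[0,\pnorm{\Sigma}{\op}^{1/2}]$ on the connected set $B_p$, and each level set $\{v\in B_p:\pnorm{\Sigma^{1/2}v}{}=\alpha\}$ is compact; hence the supremum over $v$ at fixed $\alpha$ replaces $\iprod{h}{\Sigma^{1/2}v}$ by $g_\alpha\equiv\sup_{v\in B_p,\,\pnorm{\Sigma^{1/2}v}{}=\alpha}\iprod{h}{\Sigma^{1/2}v}\geq 0$, reducing the Gordon cost to $\sup_{\alpha\in[0,\pnorm{\Sigma}{\op}^{1/2}]}\{\mp\alpha^2+\sup_{\pnorm{w}{}\leq L_w(x)}[\pm\pnorm{w}{}^2-L_u(\pnorm{a-w}{}-\rho)_+]\}$, with $a\equiv n^{-1/2}\alpha g$ and $\rho\equiv n^{-1/2}g_\alpha$. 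The heart of the proof is the inner $w$-optimization: I claim that on $E(x)$, for $K_w$ a large enough absolute constant and $L_u>4L_w(x)$ (and for $\alpha>0$; $\alpha=0$ forces $\rho=0$ and both identities are trivial),
\begin{align*}
\sup_{\pnorm{w}{}\leq L_w(x)}\big\{\pnorm{w}{}^2-L_u(\pnorm{a-w}{}-\rho)_+\big\}&=(\pnorm{a}{}+\rho)^2,\\
\sup_{\pnorm{w}{}\leq L_w(x)}\big\{-\pnorm{w}{}^2-L_u(\pnorm{a-w}{}-\rho)_+\big\}&=-(\pnorm{a}{}-\rho)_+^2.
\end{align*}
For the lower bounds, the feasible choices $w=(1+\rho/\pnorm{a}{})a$ and $w=((\pnorm{a}{}-\rho)_+/\pnorm{a}{})a$ make the $(\cdot)_+$ term vanish; feasibility holds since $\pnorm{a}{}+\rho\lesssim L_w(x)$ on $E(x)$, using the bounds $\pnorm{g}{}\leq\sqrt{n}(1+C\sqrt{x/n})$ and $\pnorm{\Sigma^{1/2}h}{}\lesssim\pnorm{\Sigma}{\op}^{1/2}(\sqrt{\mathsf{r}(\Sigma)}+\sqrt{x})$ built into $E(x)$ together with $K_w$ large. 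For the upper bound in the first identity I split into the case $\pnorm{a-w}{}\leq\rho$, where $\pnorm{w}{}^2\leq(\pnorm{a}{}+\pnorm{a-w}{})^2\leq(\pnorm{a}{}+\rho)^2$ by the triangle inequality, and the case $\pnorm{a-w}{}>\rho$, where, writing $r\equiv\pnorm{a-w}{}-\rho>0$,
\[
\pnorm{w}{}^2-L_u r\leq(\pnorm{a}{}+\rho+r)^2-L_u r=(\pnorm{a}{}+\rho)^2+r\big(2(\pnorm{a}{}+\rho)+r-L_u\big),
\]
and the last bracket is negative on $E(x)$ because $\pnorm{a}{},\rho\lesssim L_w(x)/K_w$, $r\leq\pnorm{a}{}+L_w(x)$, and $L_u>4L_w(x)$. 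For the second identity, $a$ is a scalar multiple of $g$, so replacing $w$ by its orthogonal projection onto $\mathrm{span}(g)$ only increases $-\pnorm{w}{}^2$ and decreases $\pnorm{a-w}{}$; it therefore suffices to optimize over $w=tg/\pnorm{g}{}$ with $|t|\leq L_w(x)$, and the resulting scalar problem $\sup_{|t|\leq L_w(x)}\{-t^2-L_u(|\pnorm{a}{}-t|-\rho)_+\}$ is elementary — since $L_u$ dominates $\pnorm{a}{}$ on $E(x)$, the maximum is attained at $t=(\pnorm{a}{}-\rho)_+$.

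Substituting these two identities back, on $E(x)$ the Gordon costs become exactly $\sup_{\alpha\in[0,\pnorm{\Sigma}{\op}^{1/2}]}\{(n^{-1/2}\alpha\pnorm{g}{}+n^{-1/2}g_\alpha)^2-\alpha^2\}$ and $\sup_{\alpha\in[0,\pnorm{\Sigma}{\op}^{1/2}]}\{\alpha^2-(n^{-1/2}\alpha\pnorm{g}{}-n^{-1/2}g_\alpha)_+^2\}$, which are precisely $\mathsf{F}_+(h)$ and $\mathsf{F}_-(h)$ with $n^{-1/2}\pnorm{g}{}$ in place of $1$. It then remains to replace $n^{-1/2}\pnorm{g}{}$ by $1$: on $E(x)$ one has $|n^{-1/2}\pnorm{g}{}-1|\lesssim\sqrt{x/n}$ while $\alpha\leq\pnorm{\Sigma}{\op}^{1/2}$ and $g_\alpha\leq\pnorm{\Sigma^{1/2}h}{}\lesssim\pnorm{\Sigma}{\op}^{1/2}(\sqrt{\mathsf{r}(\Sigma)}+\sqrt{x})$, so for each $\alpha$ the two expressions differ by at most $C\pnorm{\Sigma}{\op}\sqrt{x/n}\,(1+\sqrt{\mathsf{r}(\Sigma)/n}+\sqrt{x/n})$, which is $\leq\Delta(x)$ once $K_w$ is large enough (recall $\Delta(x)=\pnorm{\Sigma}{\op}^{1/2}\sqrt{x/n}\,L_w(x)$); taking $\sup_\alpha$ and using $|\sup f-\sup g|\leq\sup|f-g|$ completes the proof.

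The step I expect to be the main obstacle is the $w$-optimization: it is exactly there that the compactification parameter $L_u$ enters decisively, since the linear penalty $-L_u(\cdot)_+$ is what pins the maximizing $w$ to the region $\{\pnorm{a-w}{}\leq\rho\}$ (respectively drives $w$ back toward $a$), and verifying the quantitative inequality $2(\pnorm{a}{}+\rho)+r<L_u$ uniformly over feasible $w$ is where the apriori norm bounds packaged into $E(x)$ are genuinely needed; everything else is bookkeeping and one-dimensional calculus.
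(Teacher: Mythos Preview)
Your proof is correct and takes a genuinely more direct route than the paper's. The paper keeps the inner minimization as $\inf_{\beta\in[0,L_u]}\{\beta(\langle h,\Sigma^{1/2}v\rangle-\pnorm{\alpha g-\sqrt{n}w}{})/\sqrt{n}+\cdots\}$ without evaluating it, and then has to invoke Sion's min-max theorem several times (introducing an auxiliary dual variable $s$ for $\pnorm{w}{}^2$) to interchange $\sup_w$ with $\inf_\beta$; this forces a split into interior versus boundary maximizers of $w$ (their terms $R_1,R_2$), followed by two further steps to argue that the constraints $\beta\leq L_u$ and $|w_0|<L_w(x)$ can be dropped one at a time. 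Your device of evaluating $\inf_{u\in B_n(L_u)}$ exactly as $-L_u(\pnorm{a-w}{}-\rho)_+$ collapses all of that structure: once the penalty is in this form, the $w$-optimization is a concrete finite-dimensional problem that you solve by elementary means (triangle inequality plus the crude bound $r\leq\pnorm{a}{}+L_w(x)$ for the $+$ case, projection onto $\mathrm{span}(g)$ and one-variable calculus for the $-$ case). Both approaches rely on the same quantitative input from $E(x)$ and the same hypothesis $L_u>4L_w(x)$, and both finish with the identical Step~4 replacing $\pnorm{g}{}/\sqrt{n}$ by $1$; what your approach buys is the complete elimination of Sion's theorem and of the constraint-relaxation arguments, at the cost of a slightly more hands-on case analysis in the $w$-optimization. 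The paper's approach, by contrast, makes the role of the saddle-point structure more transparent and would generalize more readily to settings where the $\inf_u$ does not close up so cleanly.
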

\begin{proof}[Proof of Proposition \ref{prop:gordon_cost_F}: the case of $(\mathfrak{l}_+,\mathsf{F}_+)$]
	
The proof is divided into two steps.

\noindent (\textbf{Step 1}). In this step, we shall prove that for $K_w$ (in the definition of $L_w(x)$) chosen as a large enough absolute constant, on the event $E(x)$,
\begin{align}\label{ineq:gordon_cost_right_1}
&\sup_{v \in T,w \in B_n(L_w(x))}\inf_{u \in B_n(L_u)}\mathfrak{l}_+(u,v,w) \nonumber\\
&=  \sup_{\alpha \in \Lambda_T, w \in \R^n} \{\pnorm{w}{}^2-\zeta\alpha^2\},\quad \hbox{s.t. }\sup_{v \in T, \pnorm{v}{}=\alpha }\langle h, v\rangle \geq \pnorm{\alpha g-\sqrt{n} w}{}.
\end{align}
To prove (\ref{ineq:gordon_cost_right_1}), we start by noting that
\begin{align}\label{ineq:gordon_cost_left_l_reduction}
& \sup_{\substack{v \in T, w \in B_n(L_w(x))}}\inf_{u \in B_n(L_u)}\quad \mathfrak{l}_+(u,v,w)\nonumber\\
&=\sup_{\substack{v \in T, w \in B_n(L_w(x))}}\inf_{u \in B_n(L_u)}\bigg\{ n^{-1/2}\Big(\pnorm{v}{} \iprod{g}{u}+ \pnorm{u}{}\iprod{h}{v}\Big) + \pnorm{w}{}^2-\zeta\pnorm{v}{}^2-\iprod{u}{w} \bigg\}\nonumber\\
& = \sup_{\substack{v \in T, w \in B_n(L_w(x))}}\inf_{\beta \in [0,L_u]} \bigg\{\frac{\beta}{\sqrt{n}} \iprod{h}{v}-\beta \biggpnorm{ \pnorm{v}{} \frac{g}{\sqrt{n}}-w }{}+\pnorm{w}{}^2- \zeta\pnorm{v}{}^2   \bigg\}\nonumber\\
& = \sup_{\substack{\alpha \in \Lambda_T, v \in T, \pnorm{v}{}=\alpha,\\  w \in B_n(L_w(x))}}\inf_{\beta \in [0,L_u]} \bigg\{\frac{\beta}{\sqrt{n}}\Big(\langle h, v\rangle-\pnorm{ \alpha g-\sqrt{n}w }{}\Big)+ \pnorm{w}{}^2- \zeta \alpha^2  \bigg\}.
\end{align}
Let us now drop the upper bound constraint on $\beta$ for the right hand side of (\ref{ineq:gordon_cost_left_l_reduction}). Suppose $(\tilde{\alpha},\tilde{v},\tilde{w},\tilde{\beta}=L_u)$ is a saddle point for the max-min problem (\ref{ineq:gordon_cost_left_l_reduction}). Note that $\tilde{\beta}=L_u$ implies $\iprod{h}{\tilde{v}}\leq \pnorm{\tilde{\alpha}g-\sqrt{n}\tilde{w}}{}$. On the event $E(x)$, let
\begin{align*}
0\leq \tau\equiv n^{-1/2}\pnorm{\tilde{\alpha}g-\sqrt{n}\tilde{w}}{}-n^{-1/2}\iprod{h}{\tilde{v}}\leq 2 L_w(x).
\end{align*}
This means by writing $\tilde{w}=\tilde{\alpha}(g/\sqrt{n})+\big(\tau+n^{-1/2}\iprod{h}{\tilde{v}}\big)\cdot \tilde{r}$ for some $\pnorm{\tilde{r}}{}=1$, 
\begin{align}\label{ineq:gordon_cost_F_1}
\hbox{RHS of (\ref{ineq:gordon_cost_left_l_reduction})}&= \tau^2 - 2 \tau\cdot \bigg(\frac{L_u}{2}- \frac{\tilde{\alpha}}{\sqrt{n}}\iprod{g}{\tilde{r}}-\frac{1}{\sqrt{n}}\iprod{h}{\tilde{v}}\bigg) \nonumber\\
&+\frac{1}{n} \bigpnorm{ \tilde{\alpha} g + \iprod{h}{\tilde{v}}\tilde{r} }{}^2 - \zeta\alpha^2.
\end{align}
As 
\begin{align*}
\frac{L_u}{2}-\frac{\tilde{\alpha}}{\sqrt{n}}\iprod{g}{\tilde{r}}- \frac{1}{\sqrt{n}}\langle h, \tilde{v}\rangle> \frac{L_u}{2}- L_w(x) > L_w(x),
\end{align*}
the right hand side of (\ref{ineq:gordon_cost_F_1}) attains maximum for $\tau=0$ over the range $\tau \in [0, 2L_w(x)]$. This means that any $(\tilde{\alpha},\tilde{v},\tilde{w},\tilde{\beta}<L_u)$ is also a saddle point for the max-min problem (\ref{ineq:gordon_cost_right_1}) on $E(x)$. Consequently, on the event $E(x)$, the right hand side of (\ref{ineq:gordon_cost_right_1}) is the same with the infimum over $\beta \in [0,L_u]$ replaced by the infimum over $\beta \geq 0$. This means
\begin{align*}
\hbox{RHS of (\ref{ineq:gordon_cost_left_l_reduction})}&=\sup_{\alpha \in \Lambda_T, w \in B_n(L_w(x))} \{\pnorm{w}{}^2-\zeta \alpha^2\},\quad \hbox{s.t. }\sup_{v \in T, \pnorm{v}{}=\alpha }\langle h, v\rangle \geq \pnorm{\alpha g-\sqrt{n} w}{}.
\end{align*}
The constraint on $w$ can be removed for free on $E(x)$ because the constraint on the right hand side of the above display entails that $\pnorm{w}{}\leq \alpha(\pnorm{g}{}/\sqrt{n})+ \sup_{v \in T, \pnorm{v}{}=\alpha} \abs{\iprod{h}{v}}\leq L_w(x)$. This proves (\ref{ineq:gordon_cost_right_1}).

\noindent (\textbf{Step 2}).  Solving the optimization (\ref{ineq:gordon_cost_right_1}), on the event $E(x)$, 
\begin{align*}
&\sup_{ \substack{v \in T, w \in B_n(L_w(x))}}\inf_{u \in B_n(L_u)}\mathfrak{l}_+(u,v,w)\leq \sup_{\alpha \in \Lambda_T} \bigg\{\bigg(\frac{\pnorm{g}{}}{\sqrt{n}}\alpha+ \frac{1}{\sqrt{n}}\sup_{v \in T, \pnorm{v}{}=\alpha} \langle h, v\rangle \bigg)^2-\zeta\alpha^2\bigg\}.
\end{align*}
The claim follows by using that on $E(x)$,
\begin{align*}
&\biggabs{\bigg(\frac{\pnorm{g}{}}{\sqrt{n}}\alpha+ \frac{1}{\sqrt{n}}\sup_{v \in T, \pnorm{v}{}=\alpha} \langle h, v\rangle \bigg)^2-\bigg(\alpha+ \frac{1}{\sqrt{n}}\sup_{v \in T, \pnorm{v}{}=\alpha} \langle h, v\rangle \bigg)^2}\\
&\leq \alpha \biggabs{\frac{\pnorm{g}{}}{\sqrt{n}}-1}\cdot \bigg[\alpha\bigg(\frac{\pnorm{g}{}}{\sqrt{n}}+1\bigg)+\frac{2\abs{\sup_{v \in T}\iprod{h}{v}} }{\sqrt{n}}\bigg] \\
&\leq C \cdot r^2(T)\cdot\sqrt{\frac{x}{n}}\cdot \bigg(1+\sqrt{\frac{x}{n}}+\sqrt{\frac{d(T)}{n}} \bigg)= \Delta(x).
\end{align*}
The proof is complete. 
\end{proof}

\begin{proof}[Proof of Proposition \ref{prop:gordon_cost_F}: the case of $(\mathfrak{l}_-,\mathsf{F}_-)$]
Using the same calculation as in Step 1 for the case $(\mathfrak{l}_+,\mathsf{F}_+)$ , on the event $E(x)$, 
\begin{align*}
&\sup_{ \substack{v \in T, w \in B_n(L_w(x))}}\inf_{u \in B_n(L_u)}\mathfrak{l}_-(u,v,w)\leq \sup_{\alpha \in \Lambda_T} \bigg\{\zeta\alpha^2-\bigg(\frac{\pnorm{g}{}}{\sqrt{n}}\alpha- \frac{1}{\sqrt{n}}\sup_{v \in T, \pnorm{v}{}=\alpha} \langle h, v\rangle\bigg)_+^2\bigg\}.
\end{align*}
Finally proceeding as in the proof of Step 2 for the case $(\mathfrak{l}_+,\mathsf{F}_+)$, we may replace the term $\pnorm{g}{}/\sqrt{n}$ by $1$ on $E(x)$ at the price of a similar error term.
\end{proof}

We also need a variance bound for $\mathsf{F}_\pm$. Recall $\err_n(T)$ defined in (\ref{def:err}).
\begin{proposition}\label{prop:var_F}
There exists some absolute constant $C>0$ such that
\begin{align*}
 \var\big(\mathsf{F}_\pm(h)\big)\leq C\cdot \err_n^2(T),
\end{align*}
where  $h\sim \mathcal{N}(0,I_p)$. 
\end{proposition}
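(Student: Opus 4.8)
The plan is to invoke the Gaussian Poincar\'e inequality $\var\big(f(h)\big)\le \E\pnorm{\nabla f(h)}{}^2$, valid for locally Lipschitz $f:\R^p\to\R$ and $h\sim\mathcal N(0,I_p)$; the task then reduces to pointwise control of the a.e.\ gradients of $\mathsf{F}_\pm$, obtained by differentiating through the two nested suprema. As a preliminary I would record the Lipschitz behaviour of the inner object: for $\alpha\in[0,\pnorm{\Sigma}{\op}^{1/2}]$ set $g_\alpha(h)\equiv\sup_{v\in B_p,\,\pnorm{\Sigma^{1/2}v}{}=\alpha}\iprod{h}{\Sigma^{1/2}v}$. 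The feasible set is nonempty, compact and symmetric, and $\{\Sigma^{1/2}v: v\in B_p,\ \pnorm{\Sigma^{1/2}v}{}=\alpha\}$ lies on the sphere of radius $\alpha$ in $\R^p$; hence $g_\alpha$ is the support function of a subset of $\alpha B_p$, so $0\le g_\alpha(h)\le\pnorm{\Sigma^{1/2}h}{}$, the map $(\alpha,h)\mapsto g_\alpha(h)$ is jointly continuous, and $g_\alpha$ is $\alpha$-Lipschitz with $\pnorm{\nabla g_\alpha(h)}{}\le\alpha$ wherever differentiable.

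For $\mathsf{F}_-(h)=\sup_\alpha\psi_\alpha(h)$, $\psi_\alpha(h)=\alpha^2-\big(\alpha-n^{-1/2}g_\alpha(h)\big)_+^2$, I would use that $t\mapsto t_+^2$ is $2\alpha$-Lipschitz on $(-\infty,\alpha]$ together with $g_\alpha\ge 0$ to obtain $\abs{\psi_\alpha(h)-\psi_\alpha(h')}\le 2\alpha\cdot n^{-1/2}\alpha\pnorm{h-h'}{}\le 2n^{-1}\pnorm{\Sigma}{\op}\pnorm{h-h'}{}$; taking the supremum over $\alpha$ shows $\mathsf{F}_-$ is globally $(2\pnorm{\Sigma}{\op}/\sqrt n)$-Lipschitz, hence $\var(\mathsf{F}_-(h))\le 4\pnorm{\Sigma}{\op}^2/n$, which is within the claimed bound.

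For $\mathsf{F}_+(h)=\sup_\alpha\phi_\alpha(h)$, $\phi_\alpha(h)=\big(\alpha+n^{-1/2}g_\alpha(h)\big)^2-\alpha^2$, using $\alpha+n^{-1/2}g_\alpha\ge 0$ one gets, for $\alpha\le\pnorm{\Sigma}{\op}^{1/2}$,
\[
\abs{\phi_\alpha(h)-\phi_\alpha(h')}\le n^{-1/2}\alpha\pnorm{h-h'}{}\Big(2\alpha+n^{-1/2}\big(g_\alpha(h)+g_\alpha(h')\big)\Big)\le\Big(\tfrac{2\pnorm{\Sigma}{\op}}{\sqrt n}+\tfrac{\pnorm{\Sigma}{\op}^{1/2}}{n}\big(\pnorm{\Sigma^{1/2}h}{}+\pnorm{\Sigma^{1/2}h'}{}\big)\Big)\pnorm{h-h'}{},
\]
so $\mathsf{F}_+$ is locally Lipschitz with local constant at $h$ at most $\tfrac{2\pnorm{\Sigma}{\op}}{\sqrt n}+\tfrac{2\pnorm{\Sigma}{\op}^{1/2}}{n}\pnorm{\Sigma^{1/2}h}{}$; in particular $\pnorm{\nabla\mathsf{F}_+(h)}{}$ is dominated a.e.\ by this quantity. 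Feeding this into the Gaussian Poincar\'e inequality and using $\E\pnorm{\Sigma^{1/2}h}{}^2=\tr(\Sigma)=\pnorm{\Sigma}{\op}\mathsf{r}(\Sigma)$ gives $\var(\mathsf{F}_+(h))\le \tfrac{8\pnorm{\Sigma}{\op}^2}{n}+\tfrac{8\pnorm{\Sigma}{\op}^2\mathsf{r}(\Sigma)}{n^2}\le \tfrac{16\pnorm{\Sigma}{\op}^2}{n}\big(1\vee\tfrac{\mathsf{r}(\Sigma)}{n}\big)$.

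The one genuinely delicate point is justifying that the a.e.\ gradient of the envelope $\mathsf{F}_\pm$ inherits the pointwise incremental bounds above --- that is, legitimately differentiating through the supremum over $\alpha$ (and over $v$ inside $g_\alpha$). I expect this to be the main obstacle, and would resolve it either by using the Poincar\'e inequality in the form valid for locally Lipschitz functions, for which Rademacher's theorem supplies a.e.\ differentiability and the displayed estimates (letting $h'\to h$) supply the gradient bound, or --- to sidestep nonsmooth calculus --- by restricting $h$ to $B_p(0,R)$, on which $\mathsf{F}_+$ is genuinely $L(R)$-Lipschitz with $L(R)\lesssim\pnorm{\Sigma}{\op}/\sqrt n+\pnorm{\Sigma}{\op}^{1/2}R/n$, applying the Lipschitz variance bound conditionally, and absorbing the residual via Gaussian concentration of $\pnorm{h}{}$ with $R\asymp\sqrt p$. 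All remaining steps are elementary.
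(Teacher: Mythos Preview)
Your proof is correct and follows essentially the same route as the paper: bound the increments $|\mathsf{F}_\pm(h)-\mathsf{F}_\pm(h')|$ uniformly in $\alpha$, read off an a.e.\ gradient bound, and apply the Gaussian--Poincar\'e inequality together with $\E\pnorm{\Sigma^{1/2}h}{}^2=\pnorm{\Sigma}{\op}\mathsf{r}(\Sigma)$. The only differences are cosmetic: you treat $\mathsf{F}_-$ separately and obtain the sharper global Lipschitz constant $2\pnorm{\Sigma}{\op}/\sqrt n$ (the paper handles both signs at once), and you spell out the Rademacher/locally-Lipschitz justification that the paper leaves implicit. Note the small typo in your $\mathsf{F}_-$ display: $2\alpha\cdot n^{-1/2}\alpha\pnorm{h-h'}{}$ equals $2n^{-1/2}\pnorm{\Sigma}{\op}\pnorm{h-h'}{}$, not $2n^{-1}\pnorm{\Sigma}{\op}\pnorm{h-h'}{}$; your subsequent sentence and variance bound are consistent with the correct $n^{-1/2}$.
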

\begin{proof}
Clearly the functions $h\mapsto \mathsf{F}_\pm(h)$ are absolutely continuous. Note that for any $h_1,h_2 \in \R^p$,
\begin{align*}
&\abs{\mathsf{F}_+(h_1)-\mathsf{F}_+(h_2)}\leq \sup_{\alpha \in \Lambda_T} \frac{1}{\sqrt{n}} \bigg\{\, \biggabs{\sup_{v \in T, \pnorm{v}{}=\alpha} \iprod{h_1}{v}-\sup_{v \in T, \pnorm{v}{}=\alpha} \iprod{h_2}{v}}\\
&\qquad \times \biggabs{2\sqrt{\abs{\zeta}}\alpha+ \frac{1}{\sqrt{n}}\sup_{v \in T, \pnorm{v}{}=\alpha} \iprod{h_1}{v}+ \frac{1}{\sqrt{n}}\sup_{v \in T, \pnorm{v}{}=\alpha} \iprod{h_2}{v}} \bigg\}\\
&\leq \frac{1}{\sqrt{n}} \pnorm{h_1-h_2}{}\cdot r^2(T)\cdot\bigg(2\sqrt{\abs{\zeta}}+ \frac{1}{r(T)\sqrt{n}}\Big\{\bigabs{\sup_{v \in T}\iprod{h_1}{v}}+\bigabs{\sup_{v \in T}\iprod{h_2}{v}}\Big\}\bigg).
\end{align*}
A similar bound holds for $\abs{\mathsf{F}_-(h_1)-\mathsf{F}_-(h_2)}$. This means
\begin{align*}
\pnorm{\nabla \mathsf{F}_\pm(h)}{}\leq C\cdot\frac{ r^2(T)}{\sqrt{n}}\cdot\bigg(1+\sqrt{\abs{\zeta}}+\frac{  \abs{ \sup_{v \in T}\iprod{h}{v}} }{r(T)\sqrt{n}}\bigg).
\end{align*}
Now Gaussian-Poincar\'e inequality yields that
\begin{align*}
\var\big(\mathsf{F}_\pm(h)\big)\leq \E \pnorm{\nabla \mathsf{F}_\pm(h)}{}^2 \leq C\cdot \frac{r^4(T) }{n}\bigg(1\vee\abs{\zeta}\vee\frac{d(T)}{n}\bigg)=C\cdot \err_n^2(T),
\end{align*}
as desired. 
\end{proof}

\subsection{A rough comparison inequality}

With the above preparations we may now prove a rough comparison inequality. 

\begin{theorem}\label{thm:exp_rough_bound}
Suppose $0 \in T$. For any $z \in \R$ and $x\geq 1$, 
\begin{align*}
\Prob\big( \mathscr{E}_\pm \geq z \big)&\leq 2\cdot \bm{1}\big( \E\mathsf{F}_\pm(h) \geq z - C\cdot x\cdot  \err_n(T)\big)+ \frac{C}{x^2},
\end{align*}
where $h\sim \mathcal{N}(0,I_p)$. 
\end{theorem}

\begin{proof}
We will only prove the claim for $\mathscr{E}_+$, as the claim for $\mathscr{E}_-$ follows from completely similar arguments. Combining the proceeding Propositions \ref{prop:primal_cost_err}-\ref{prop:gordon_cost_F}, we have for any $z \in \R$,
\begin{align*}
\Prob\big( \mathscr{E}_+\geq z \big)&\leq \Prob \bigg( \sup_{v \in T,w \in B_n(L_w(x))}\inf_{u \in B_n(L_u)}\mathfrak{h}_+(u,v,w) \geq z- \rem(x,L_u)\bigg)+\Prob(E(x)^c)\\
&\leq 2\Prob\bigg( \sup_{v \in T,w \in B_n(L_w(x))}\inf_{u \in B_n(L_u)}\mathfrak{l}_+(u,v,w)\geq z-\rem(x,L_u)\bigg)+\Prob(E(x)^c)\\
&\leq 2\Prob\bigg(\mathsf{F}_+(h) \geq z-\rem(x,L_u) -\Delta(x)  \bigg)+2 \Prob(E(x)^c).
\end{align*}
As the left hand side and the far right hand side of the above inequality do not depend on $L_u$, we may let $L_u \to \infty$ to kill the term $\rem(x,L_u)$. Now using Proposition \ref{prop:var_F}, it is easy to see that
\begin{align*}
\Prob\big(\mathsf{F}_+(h) \geq z -\Delta(x)  \big)\leq \bm{1}\big(\E\mathsf{F}_+(h) \geq z - C\cdot x\cdot \err_n(T)-\Delta(x)  \big)+ \frac{C}{x^2}.
\end{align*}
Finally using that $\Prob(E(x)^c)\leq e^{-x}$ for $x\geq 1$ (by Proposition \ref{prop:high_prob_E}), and that $\Delta(x)$ can be assimilated into the term before it with a possibly larger constant $C>0$, to conclude the inequality.
\end{proof}

\subsection{A concentration inequality}

\begin{proposition}\label{prop:cov_op_conc}
	Suppose $0 \in T$. There exists some universal constant $C>0$ such that for any $x\geq 1$, with probability at least $1-e^{-x}$, 
	\begin{align*}
	\abs{\mathscr{E}_\pm- \E \mathscr{E}_\pm }\leq C r^2(T)\cdot \bigg(\sqrt{1\vee \abs{\zeta}\vee \frac{d(T)}{n}}\sqrt{\frac{x}{n}}+\frac{x}{n}\bigg).
	\end{align*}
\end{proposition}

\begin{proof}
	We focus on $\mathscr{E}_+$. The proof is inspired by that of \cite[Theorem 5]{koltchinskii2017concentration}. Take any $1$-Lipschitz function $\varphi:\R_{\geq 0}\to [0,1]$ such that $\varphi|_{[0,1]}=1$ and $\varphi|_{[2,\infty)}=0$, and define $f: \R^{p\times n}\to \R$ by
	\begin{align*}
	\mathsf{f}_\delta(Z)\equiv \sup_{t \in T}\big(n^{-1}t^\top ZZ^\top t- \zeta \pnorm{t}{}^2\big)\cdot  \varphi\bigg(\delta^{-1} \sup_{t \in T}\big(n^{-1}t^\top ZZ^\top t- \zeta \pnorm{t}{}^2\big)\bigg). 
	\end{align*}
	\noindent (\textbf{Step 1}). We claim that there exists some universal constant $C>0$ such that
	\begin{align}\label{ineq:cov_op_conc_1}
	\pnorm{\mathsf{f}_\delta}{\lip}\leq C\cdot \frac{\sqrt{\delta}+\sqrt{\abs{\zeta}} r(T)}{\sqrt{n}}\cdot r(T).
	\end{align}
	Fix any $Z_1,Z_2 \in \R^{p\times n}$. We shall give a bound for $\abs{\mathsf{f}_\delta(Z_1)-\mathsf{f}_\delta(Z_2)}$. Let 
	\begin{align*}
	W_i\equiv n^{-1} Z_iZ_i^\top - \zeta I_p,\quad i=1,2,
	\end{align*}
	and we write for notational convenience that 
	\begin{align*}
	\pnorm{W}{T}\equiv \sup_{t \in T} t^\top Wt,\quad W \in \R^{p\times p}.
	\end{align*}
	We only need to consider the case where one of $\pnorm{W_1}{T}, \pnorm{W_2}{T}$ is $\leq 2\delta$; otherwise $\abs{\mathsf{f}_\delta(Z_1)-\mathsf{f}_\delta(Z_2)}=0$. Assume for simplicity that $\pnorm{W_1}{T}\leq 2\delta$. Then using that $\varphi$ is $1$-Lipschitz and bounded by $1$,
	\begin{align}\label{ineq:cov_op_conc_2}
	\abs{\mathsf{f}_\delta(Z_1)-\mathsf{f}_\delta(Z_2)}&=\bigabs{\pnorm{W_1}{T} \varphi\big(\delta^{-1} \pnorm{W_1}{T}\big)- \pnorm{W_2}{T} \varphi\big(\delta^{-1} \pnorm{W_2}{T}\big)}\nonumber\\
	&\leq \bigabs{ \pnorm{W_1}{T}\big[ \varphi\big(\delta^{-1} \pnorm{W_1}{T}\big)-\varphi\big(\delta^{-1} \pnorm{W_2}{T}\big) \big] }\nonumber\\
	&\qquad +\bigabs{ \big(\pnorm{W_1}{T}-\pnorm{W_2}{T}\big) \varphi\big(\delta^{-1} \pnorm{W_2}{T}\big) }\nonumber\\
	&\leq \bigg(\frac{ \pnorm{W_1}{T} }{\delta}+1\bigg) \pnorm{W_1-W_2}{T}\leq 3 \pnorm{W_1-W_2}{T}.  
	\end{align}
	On the other hand,
	\begin{align*}
	&\pnorm{W_1-W_2}{T}\\
	&\leq n^{-1}\pnorm{Z_1(Z_1-Z_2)^\top }{T}+n^{-1}\pnorm{(Z_1-Z_2)Z_2^\top }{T}\\
	&\leq n^{-1/2}\cdot \big(\pnorm{n^{-1} Z_1Z_1^\top}{T}^{1/2}+\pnorm{ n^{-1}Z_2Z_2^\top}{T}^{1/2}\big)\cdot \pnorm{(Z_1-Z_2)(Z_1-Z_2)^\top}{T}^{1/2}\\
	& \lesssim n^{-1/2}\cdot \big(\sqrt{\abs{\zeta}} r(T)+\pnorm{W_1}{T}^{1/2}+\pnorm{W_2}{T}^{1/2}\big)\cdot \pnorm{(Z_1-Z_2)(Z_1-Z_2)^\top}{T}^{1/2}\\
	&\lesssim n^{-1/2}\cdot \big(\sqrt{\abs{\zeta}} r(T)+\sqrt{\delta}+\pnorm{W_1-W_2}{T}^{1/2}\big)\cdot \pnorm{(Z_1-Z_2)(Z_1-Z_2)^\top}{T}^{1/2}.
	\end{align*}
	Solving the above quadratic inequality yields
	\begin{align}\label{ineq:cov_op_conc_3}
	\pnorm{W_1-W_2}{\op}&\lesssim \frac{\sqrt{\delta}+\sqrt{\abs{\zeta}} r(T)}{\sqrt{n}} \pnorm{(Z_1-Z_2)(Z_1-Z_2)^\top}{T}^{1/2}\nonumber\\
	&\qquad + \frac{1}{n}\pnorm{(Z_1-Z_2)(Z_1-Z_2)^\top}{T}.
	\end{align}
	Combining (\ref{ineq:cov_op_conc_2}) and (\ref{ineq:cov_op_conc_3}), and using that $\abs{\mathsf{f}_\delta(Z_1)-\mathsf{f}_\delta(Z_2)}\leq 2\delta$ by definition, 
	\begin{align*}
	&\abs{\mathsf{f}_\delta(Z_1)-\mathsf{f}_\delta(Z_2)}\\
	&\lesssim \bigg(\frac{\sqrt{\delta}+\sqrt{\abs{\zeta}} r(T)}{\sqrt{n}} \pnorm{(Z_1-Z_2)(Z_1-Z_2)^\top}{T}^{1/2}+ \frac{1}{n}\pnorm{(Z_1-Z_2)(Z_1-Z_2)^\top}{T}\bigg)\wedge \delta\\
	&\lesssim \frac{\sqrt{\delta}+\sqrt{\abs{\zeta}}r(T)}{\sqrt{n}} \pnorm{(Z_1-Z_2)(Z_1-Z_2)^\top}{T}^{1/2}\\
	& \leq  \frac{\sqrt{\delta}+\sqrt{\abs{\zeta}}r(T)}{\sqrt{n}}\cdot r(T)\cdot  \pnorm{Z_1-Z_2}{F},
	\end{align*}
	proving the claim (\ref{ineq:cov_op_conc_1}). 
	
	\noindent (\textbf{Step 2}). Now by Gaussian concentration for Lipschitz functions (see, e.g., \cite[Eqn. (1.9)]{ledoux2013probability}, or \cite[Theorem 2.2.6]{gine2015mathematical}), we have for any $x\geq 1$, on an event $E_1(x)$ with probability at least $1-e^{-x}$,
	\begin{align}\label{ineq:cov_op_conc_4}
	\bigabs{\mathsf{f}_\delta(G)- \med \big(\mathsf{f}_\delta(G)\big)}\leq  C\cdot (\sqrt{\delta}+\sqrt{\abs{\zeta}}r(T))\cdot r(T)\cdot \sqrt{\frac{x}{n}}.
	\end{align}
	Here $\med \big(\mathsf{f}_\delta(G)\big)$ is the median of $\mathsf{f}_\delta(G)$. Below we shall prove that the above display holds where $\mathsf{f}_\delta(G)$ is replaced by $\mathscr{E}_+=\sup_{t \in T}\big(n^{-1}t^\top GG^\top t-\zeta\pnorm{t}{}^2\big)=\sup_{t \in T}\big(\pnorm{G_n t}{}^2-\zeta \pnorm{t}{}^2\big)$ for an appropriate choice of $\delta$.

	First consider the right tail. On the event $E_1(x)\cap E_2(\delta)$, where $E_1(x)$ is defined in (\ref{ineq:high_prob_E_0}) and $E_2(\delta)\equiv \{\sup_{t \in T}\big(\pnorm{G_n t}{}^2- \zeta\pnorm{t}{}^2\big)\leq \delta\}$, using $
	\med \big(\mathsf{f}_\delta(G)\big)\leq \med (\mathscr{E}_+)$,
	we have the right tail control
	\begin{align*}
	&\mathscr{E}_+= \mathsf{f}_\delta(G)\leq \med(\mathscr{E}_+)+ C\cdot (\sqrt{\delta}+\sqrt{\abs{\zeta}}r(T))\cdot r(T)\cdot \sqrt{\frac{x}{n}}.
	\end{align*}
	By Proposition \ref{prop:high_prob_E}, with the choice
	\begin{align}\label{ineq:cov_op_conc_5}
	\delta\equiv \delta(x)\equiv C_0\cdot r^2(T) \bigg(1+ \abs{\zeta}+ \frac{d(T)}{n} +\frac{x}{n}\bigg)
	\end{align}
	for a large enough absolute constant $C_0>0$, we have $\Prob(E_2(\delta(x)))\geq 1-e^{-x}$. Now combining the above two displays, on the event $E_1(x)\cap E_2(\delta(x))$ with probability at least $1-2e^{-x}$, we have
	\begin{align}\label{ineq:cov_op_conc_6}
	\mathscr{E}_+\leq \med(\mathscr{E}_+)+C\cdot r^2(T)\bigg(\sqrt{1\vee \abs{\zeta}\vee  \frac{d(T)}{n}}+\sqrt{\frac{x}{n}}\bigg)\sqrt{\frac{x}{n}}.
	\end{align}
	For the left tail, choosing the same $\delta=\delta(x)$ as in (\ref{ineq:cov_op_conc_5}), as $\mathsf{f}_\delta(G) = \mathscr{E}_+$ on $E_2(\delta(x))$ and $x\geq 1$,
	\begin{align*}
	\Prob\big(\mathsf{f}_\delta(G)>\med(\mathscr{E}_+)\big)&\geq \Prob\big(\mathscr{E}_+>\med(\mathscr{E}_+), \mathscr{E}_+\leq \delta(x) \big)\\
	&\geq \Prob\big( \mathscr{E}_+>\med(\mathscr{E}_+)\big)-\Prob\big(\mathscr{E}_+>\delta(x)\big)\\
	&\geq \frac{1}{2}-e^{-x} \geq 0.13.
	\end{align*}
	Now using \cite[Lemma 2]{koltchinskii2017concentration} and (\ref{ineq:cov_op_conc_1}), and the fact that $\mathscr{E}_+\geq \mathsf{f}_\delta(G)$, we obtain lower tail estimate: for any $x\geq 1$, with probability at least $1-e^{-x}$, 
	\begin{align}\label{ineq:cov_op_conc_7}
	\mathscr{E}_+\geq \med(\mathscr{E}_+)-C\cdot r^2(T)\bigg(\sqrt{1\vee \abs{\zeta} \vee \frac{d(T)}{n}}+\sqrt{\frac{x}{n}}\bigg)\sqrt{\frac{x}{n}}.
	\end{align}
	Combining (\ref{ineq:cov_op_conc_6})-(\ref{ineq:cov_op_conc_7}) and adjusting constants, we have established for any $x\geq 1$, with probability at least $1-e^{-x}$, 
	\begin{align}\label{ineq:cov_op_conc_8}
	\abs{\mathscr{E}_+- \med(\mathscr{E}_+)}\leq C\cdot r^2(T)\bigg(\sqrt{1\vee \abs{\zeta} \vee \frac{d(T)}{n}}+\sqrt{\frac{x}{n}}\bigg)\sqrt{\frac{x}{n}}.
	\end{align}
	Integrating the above display, one obtain
	\begin{align}\label{ineq:cov_op_conc_9}
	\bigabs{\E \mathscr{E}_+- \med(\mathscr{E}_+)}\leq C\cdot \frac{r^2(T)}{\sqrt{n}}\sqrt{1\vee \abs{\zeta} \vee \frac{d(T)}{n}}.
	\end{align}
	The claimed concentration of $\mathscr{E}_+$ around its mean now follows by combining the above two displays (\ref{ineq:cov_op_conc_8})-(\ref{ineq:cov_op_conc_9}).
\end{proof}

\subsection{Proof of Theorems \ref{thm:square_process} and \ref{thm:general_bound_sp_dT}}\label{section:proof_square_process}

We need two simple facts.

\begin{lemma}\label{lem:upper_bound_E_T}
	Suppose $0 \in T$. It holds that
	\begin{align*}
	E_\ast(T)=E_{+,1}(T) \leq  r^2(T)\cdot \bigg(2\sqrt{\frac{d(T)}{n}}+ \frac{d(T)}{n}\bigg).
	\end{align*}
\end{lemma}
\begin{proof}
	Using $
	\sup_{t \in T/r(T), \pnorm{t}{}=\alpha} \iprod{h}{t}\leq \sup_{t \in T/r(T)} \iprod{h}{t}$, we have
	\begin{align*}
	E_\ast(T)/r^2(T)&\leq \E\sup_{\alpha \in [0,1]} \bigg\{\bigg(\alpha+ \frac{ \sup_{t \in T/r(T)} \iprod{h}{t} }{\sqrt{n}}\bigg)^2-\alpha^2\bigg\} \\
	&= \frac{2 }{\sqrt{n}} \E  \sup_{t \in T/r(T)} \iprod{h}{t}+ \frac{1}{n}\E \Big(\sup_{t \in T/r(T)} \iprod{h}{t}\Big)^2.
	\end{align*}
	Now using the simple fact 
	\begin{align*}
	\E \sup_{t \in T/r(T)} \iprod{h}{t}\leq \E^{1/2} \Big(\sup_{t \in T/r(T)} \iprod{h}{t}\Big)^2 = \sqrt{d(T)},
	\end{align*}
	we obtain the desired claim.
\end{proof}

\begin{lemma}\label{lem:F_pm_comp}
Recall $\mathsf{F}_\pm$ in (\ref{def:F}). Suppose $0\in T$ and $\zeta=1$. Then $\mathsf{F}_+\geq \mathsf{F}_-$.
\end{lemma}
\begin{proof}
Let $G(\alpha)\equiv n^{-1/2}\sup_{v \in T, \pnorm{v}{}=\alpha} \iprod{h}{v}$. Then 
\begin{align*}
\mathsf{F}_+=\sup_{\alpha \in \Lambda_T} \big\{(\alpha+G(\alpha))^2-\alpha^2\big\},\quad \mathsf{F}_-= \sup_{\alpha \in \Lambda_T} \big\{\alpha^2- (\alpha-G(\alpha))_+^2\big\}.
\end{align*}
As $0 \in \Lambda_T$, we only need to consider $\alpha \in \Lambda_T$ for which $G(\alpha)\geq 0$. We consider two cases below:
\begin{enumerate}
	\item If $\alpha\geq G(\alpha)\geq 0$, then $\mathsf{F}_+= \sup_{\alpha \in \Lambda_T} \big\{2\alpha G(\alpha)+G^2(\alpha)\big\}$ and $\mathsf{F}_-= \sup_{\alpha \in \Lambda_T} \big\{2\alpha G(\alpha)-G^2(\alpha)\big\}$, so $\mathsf{F}_+\geq \mathsf{F}_-$.
	\item If $G(\alpha)>\alpha\geq 0$, then $\mathsf{F}_+=\sup_{\alpha\in \Lambda_T} \big\{2\alpha G(\alpha)+G^2(\alpha)\big\}$ and $\mathsf{F}_-=\sup_{\alpha\in \Lambda_T} \alpha^2$. As $2\alpha G(\alpha)+G^2(\alpha)\geq \alpha^2$, we have $\mathsf{F}_+\geq \mathsf{F}_-$.
\end{enumerate}
The proof is complete.
\end{proof}

We are now in a position to prove Theorems \ref{thm:square_process} and \ref{thm:general_bound_sp_dT}.

\begin{proof}[Proof of Theorems \ref{thm:square_process} and \ref{thm:general_bound_sp_dT}]
To conclude the control of the expectation from Theorem \ref{thm:exp_rough_bound}, we start by taking $
z \equiv \E \mathsf{F}_+(h)+ 2C\cdot x\cdot  \err_n(T)$
in the right tail inequality in Theorem \ref{thm:exp_rough_bound}, which shows that for $x\geq 1$,
\begin{align*}
\Prob\Big(\mathscr{E}_+\geq  \E \mathsf{F}_+(h)+ 2C\cdot x\cdot \err_n(T)\Big)\leq \frac{C}{x^2}. 
\end{align*}
Integrating the tail yields that
\begin{align*}
\E \mathscr{E}_+ \leq \E \mathsf{F}_+(h)+C\cdot   \err_n(T).
\end{align*}
Theorem \ref{thm:square_process} now follows from the concentration estimate in Proposition \ref{prop:cov_op_conc}.

Theorem \ref{thm:general_bound_sp_dT} follows by applying Theorem \ref{thm:square_process} with $T\cup \{0\}$, upon (i) using Lemma \ref{lem:F_pm_comp} with $\mathsf{F}_+\geq \mathsf{F}_-$ and (ii) using Lemma \ref{lem:upper_bound_E_T} with
\begin{align*}
{\frac{1}{\sqrt{n}}\sqrt{1\vee \frac{d(T)}{n}}}\bigg/ \bigg( \sqrt{\frac{d(T)}{n}}+ \frac{d(T)}{n}\bigg) \lesssim \frac{1}{\sqrt{d(T)}}.
\end{align*}
The proof is complete. 
\end{proof}

\section{Proofs of Theorems \ref{thm:sqrt_process} and \ref{thm:sqrt_process_zetaS}}\label{section:proof_sqrt_process}

\subsection{Some notation}

For two compact sets $T\subset \R^p$ and $S \subset \R^n$,
\begin{align}
\mathscr{E}_\pm^\circ \equiv \sup_{v \in T} \pm\bigg(\sup_{s \in S}\iprod{s}{G_n v}- \zeta\pnorm{v}{}\bigg).
\end{align}
Let the primal and Gordon cost functions associated with $\mathscr{E}_\pm^\circ$ be
\begin{align}\label{def:gordon_cost_h_l_sqrt}
\mathfrak{h}_\pm^\circ(s,u,v,w)&\equiv \pm \iprod{s}{w} \mp \zeta\pnorm{v}{} + \iprod{u}{G_n v-w},\nonumber\\
\mathfrak{l}_\pm^\circ(s,u,v,w)& \equiv n^{-1/2}\pnorm{v}{} \iprod{g}{u}+ n^{-1/2}\pnorm{u}{}\iprod{h}{v}\pm \iprod{s}{w} \mp \zeta\pnorm{v}{}-\iprod{u}{w}.
\end{align}
We define
\begin{align}\label{def:err_sqrt}
\err_n^\circ(T,S)\equiv \frac{r(T)r(S)}{\sqrt{n}},
\end{align}
and
\begin{align}\label{def:F_sqrt}
\mathsf{F}_{+}^\circ(g,h)&\equiv \sup_{\alpha \in \Lambda_T,\beta \in \Lambda_S}\bigg\{ \frac{\alpha}{\sqrt{n}}\sup_{s \in S, \pnorm{s}{}=\beta}\iprod{g}{s}+ \frac{\beta}{\sqrt{n}} \sup_{v\in T,\pnorm{t}{}=\alpha} \iprod{h}{t}-\zeta\alpha\bigg\},\nonumber\\
\mathsf{F}_{-}^\circ(g,h)&\equiv \sup_{\alpha \in \Lambda_T}\inf_{\beta \in \Lambda_S}\bigg\{\zeta\alpha- \frac{\alpha}{\sqrt{n}}\sup_{s \in S, \pnorm{s}{}=\beta}\iprod{g}{s}+ \frac{\beta}{\sqrt{n}} \sup_{v\in T,\pnorm{t}{}=\alpha} \iprod{h}{t}\bigg\}.
\end{align}
Throughout this section, let for $x\geq 1$
\begin{align}\label{def:L_Delta_sqrt}
L_w^\circ(x)&\equiv  K_w \cdot  r(T)\bigg(1+\sqrt{ \frac{d(T)+x}{n} }\bigg), 
\end{align}
where $K_w>0$ is a large enough absolute constant. We typically work on the event $E^\circ(x)=E(x)$ as defined in (\ref{def:event_E}).

\subsection{Reduction via Gaussian min-max theorem}

First we establish an analogue of Proposition \ref{prop:primal_cost_err} for $\mathscr{E}_\pm^\circ$  that quantifies the error in compactification. 
\begin{proposition}\label{prop:primal_cost_err_sqrt}
Suppose $0 \in T$. For $L_w^\circ(x)$ defined in (\ref{def:L_Delta_sqrt}) and $L_u^\circ>1$, let
\begin{align}\label{def:rem_sqrt}
\rem^\circ (x,L_u^\circ)\equiv \sqrt{n} (\abs{\zeta}+r(S))^2 L_w^\circ(x)/L_u^\circ.
\end{align}
Then on the event $E^\circ(x)$,
\begin{align*}
&\bigabs{\mathscr{E}_+^\circ- \sup_{v \in T, s \in S, w \in B_n(L_w^\circ(x))} \inf_{u \in B_n(L_u^\circ)}  \mathfrak{h}_+^\circ(s,u,v,w)}\\
& \quad \vee \bigabs{\mathscr{E}_-^\circ- \sup_{v \in T, w \in B_n(L_w^\circ(x))} \inf_{u \in B_n(L_u^\circ),s \in S}  \mathfrak{h}_-^\circ(s,u,v,w)} \leq \rem^\circ (x,L_u^\circ).
\end{align*}
\end{proposition}
\begin{proof}
First consider $\mathscr{E}_+^\circ$. Note that
\begin{align*}
\mathscr{E}_+^\circ\equiv \sup_{v \in T, s \in S, w \in \R^n} \inf_{u \in \R^n} \mathfrak{h}_+^\circ(s,u,v,w).
\end{align*}
Using the same argument below (\ref{ineq:gordon_cost_0}), on the event $E^\circ(x)$, we may restrict the supremum over $w \in \R^n$ to $w \in B_n(L_w^\circ(x))$. So on $E^\circ(x)$, we have the following analogue of (\ref{ineq:gordon_cost_4}): for any $L_u^\circ>1$,
\begin{align*}
\mathscr{E}_+^\circ\leq \sup_{v \in T, s \in S, w \in B_n(L_w^\circ(x))} \inf_{u \in B_n(L_u^\circ)} \Big\{ \iprod{s}{w} - \zeta\pnorm{v}{} + \iprod{u}{G_n v-w}\Big\}.
\end{align*}
Using the same argument below (\ref{ineq:gordon_cost_4}), we may restrict further the range of $v,w$ to $\pnorm{G_nv-w}{\infty}\leq (\abs{\zeta}+r(S)) L_w^\circ(x)/L_u^\circ$, as $0 \in T$. So on $E^\circ(x)$,
\begin{align*}
\mathscr{E}_+^\circ &\leq \sup_{v \in T, s \in S, w \in B_n(L_w^\circ(x))} \inf_{u \in B_n(L_u^\circ)}  \mathfrak{h}_+^\circ(s,u,v,w)\\
&\leq   \sup_{\substack{v \in T, s \in S, w \in B_n(L_w^\circ(x)),\\ \pnorm{G_nv-w}{\infty}\leq (\abs{\zeta}+r(S)) L_w^\circ(x)/L_u^\circ }} \inf_{u \in B_n(L_u^\circ)} \Big\{ \iprod{s}{w} - \zeta\pnorm{v}{} + \iprod{u}{G_n v-w}\Big\}\\
&\stackrel{(\ast)}{\leq} \sup_{\substack{v \in T, s \in S, w \in B_n(L_w^\circ(x)),\\ \pnorm{G_nv-w}{\infty}\leq (\abs{\zeta}+r(S)) L_w^\circ(x)/L_u^\circ }} \Big\{ \iprod{s}{G_n v}- \zeta\pnorm{v}{} +\iprod{s}{w-G_nv} \Big\}\\
&\stackrel{(\ast\ast)}{\leq} \mathscr{E}_+^\circ + \sqrt{n} (\abs{\zeta}+r(S))^2 L_w^\circ(x)/L_u^\circ.
\end{align*}
Here in $(\ast)$ we take $u=0$ in the infimum and write $\iprod{s}{w}=\iprod{s}{G_nv}+\iprod{s}{w-G_nv}$, and in $(\ast\ast)$ we use the constraint $\pnorm{G_n v-w}{\infty}\leq (\abs{\zeta}+r(S))L_w^\circ(x)/L_u^\circ$.

The case $\mathscr{E}_-^\circ$ follows similarly by formally replacing $\sup_{s \in S}$ with $\inf_{s \in S}$.
\end{proof}

The next proposition is analogous to Proposition \ref{prop:compact_cgmt}, but now connects the primal cost $\mathfrak{h}_\pm^\circ$ to the Gordon cost $\mathfrak{l}_\pm^\circ$.

\begin{proposition}\label{prop:compact_cgmt_sqrt}
	Let $L_w^\circ(x)$ be defined in (\ref{def:L_Delta_sqrt}), and $L_u^\circ>1$. For any $z \in \R$, 
	\begin{align*}
	&\Prob\bigg(\sup_{\substack{v \in T, s \in S,\\w \in B_n(L_w^\circ(x))}}\inf_{u \in B_n(L_u^\circ)}\mathfrak{h}_+^\circ(s,u,v,w)\geq z\bigg)  \leq 2\Prob\bigg( \sup_{\substack{v \in T, s \in S \\w \in B_n(L_w^\circ(x))}}\inf_{u \in B_n(L_u^\circ)}\mathfrak{l}_+^\circ(s,u,v,w)\geq z\bigg),\\
	&\Prob\bigg(\sup_{\substack{v \in T,\\w \in B_n(L_w^\circ(x))}}\inf_{ \substack{s\in S, \\u \in B_n(L_u^\circ) }}\mathfrak{h}_-^\circ(s,u,v,w)\geq z\bigg)  \leq 2\Prob\bigg( \sup_{\substack{v \in T, \\w \in B_n(L_w^\circ(x))}}\inf_{ \substack{s\in S, \\u \in B_n(L_u^\circ) }}\mathfrak{l}_-^\circ(s,u,v,w)\geq z\bigg).
	\end{align*}
\end{proposition}

Then we relate $\mathfrak{l}_\pm^\circ$ to $\mathsf{F}^\circ_\pm$ in analogy to Proposition \ref{prop:gordon_cost_F}.
\begin{proposition}\label{prop:gordon_cost_F_sqrt}
	Suppose $0 \in T$. Let $L_w^\circ(x), \Delta^\circ(x)$ be defined as in (\ref{def:L_Delta_sqrt}) with $K_w$ being a large enough absolute constant. Suppose
	\begin{align*}
	L_u^\circ > \max \big\{1,r(S)\}.
	\end{align*}
	Then on the event $E^\circ(x)$,
	\begin{align*}
	 &\sup_{v \in T, s \in S, w \in B_n(L_w^\circ(x))} \inf_{u \in B_n(L_u^\circ)}  \mathfrak{l}_+^\circ(s,u,v,w)\leq \mathsf{F}^\circ_+(g,h),\\
	 &\sup_{v \in T, w \in B_n(L_w^\circ(x))} \inf_{u \in B_n(L_u^\circ), s \in S}  \mathfrak{l}_-^\circ(s,u,v,w)\leq \mathsf{F}^\circ_-(g,h).
	\end{align*}
\end{proposition}

\begin{proof}
First consider $\mathfrak{l}^\circ_+$. Using calculations similar to (\ref{ineq:gordon_cost_left_l_reduction}),
\begin{align}\label{ineq:gordon_cost_left_l_sqrt_1}
&\sup_{v \in T, s \in S, w \in B_n(L_w^\circ(x))} \inf_{u \in B_n(L_u^\circ)}  \mathfrak{l}_+^\circ(s,u,v,w)\nonumber\\
& = \sup_{\substack{s\in S, \alpha \in \Lambda_T, v \in T, \pnorm{v}{}=\alpha,\\  w \in B_n(L_w^\circ(x))}}\inf_{\beta \in [0,L_u^\circ]} \bigg\{\frac{\beta}{\sqrt{n}}\Big(\langle h, v\rangle-\pnorm{ \alpha g-\sqrt{n}w }{}\Big)+ \iprod{s}{w}-\zeta \alpha  \bigg\}.
\end{align}
First we drop the constraint on $\beta$. Suppose $(\tilde{s},\tilde{\alpha},\tilde{v},\tilde{w},\tilde{\beta}=L_u^\circ)$ is a saddle point for the above max-min problem. Then we must have $\iprod{h}{\tilde{v}}\leq \pnorm{\tilde{\alpha}g-\sqrt{n}\tilde{w}}{}$. Now on the event $E^\circ(x)$, let
\begin{align*}
0\leq \tilde{\tau}\equiv n^{-1/2}\pnorm{\tilde{\alpha}g-\sqrt{n}\tilde{w}}{}-n^{-1/2}\iprod{h}{\tilde{v}}\leq  2L_w^\circ(x).
\end{align*} 
This means by writing $\tilde{w}=\tilde{\alpha}(g/\sqrt{n})+\big(\tilde{\tau}+n^{-1/2}\iprod{h}{\tilde{v}}\big)\cdot \tilde{r}$ for some $\pnorm{\tilde{r}}{}=1$, 
\begin{align*}
\hbox{RHS of (\ref{ineq:gordon_cost_left_l_sqrt_1})}= \big(-L_u^\circ + \iprod{\tilde{s}}{\tilde{r}}\big)\tilde{\tau}+ \Big(n^{-1/2}\tilde{\alpha}\iprod{\tilde{s}}{g}+n^{-1/2}\iprod{h}{\tilde{v}}\iprod{\tilde{s}}{\tilde{r}}-\zeta \tilde{\alpha}\Big).
\end{align*}
Clearly for $L_u^\circ>r(S)$, the right hand side of the above display is maximized for $\tilde{\tau}=0$. This means that on $E^\circ(x)$, any $\beta \in (0,L_u^\circ)$ is also a saddle point for $(\tilde{s},\tilde{\alpha},\tilde{v},\tilde{w})$. Consequently, on $E^\circ(x)$, the constraint $\beta \in [0,L_u^\circ]$ can be replaced by $\beta \in [0,\infty)$. Moreover, the suprema over $w \in B_n(L_w^\circ (x))$ can be replaced by $w \in \R^n$ on $E^\circ(x)$.  Therefore, on $E^\circ(x)$,
\begin{align*}
&\sup_{v \in T, s \in S, w \in B_n(L_w^\circ(x))} \inf_{u \in B_n(L_u^\circ)}  \mathfrak{l}_+^\circ(s,u,v,w)\nonumber\\
& = \sup_{\substack{s\in S, \alpha \in \Lambda_T, w \in \R^n }} \iprod{s}{w}-\zeta \alpha, \quad \hbox{s.t. } \sup_{v\in T,\pnorm{v}{}=\alpha} \iprod{h}{v}\geq \pnorm{\alpha g-\sqrt{n}w}{}.
\end{align*} 
Now by writing $w = n^{-1/2}\alpha g+ r$, where $r \in   B_n\big(r_T(\alpha)\big)$ with $r_T(\alpha)\equiv n^{-1/2}\sup_{v\in T,\pnorm{v}{}=\alpha} \iprod{h}{v}$, the right hand side of the above display is equal to
\begin{align*}
\sup_{s \in S, \alpha \in \Lambda_T, r_T(\alpha)\geq 0} \frac{\alpha}{\sqrt{n}}\iprod{g}{s}+ \frac{\pnorm{s}{}}{\sqrt{n}} \sup_{v\in T,\pnorm{v}{}=\alpha} \iprod{h}{v}-\zeta\alpha \leq \mathsf{F}_+^\circ(g,h),
\end{align*}
proving the case for $\mathfrak{l}_+^\circ$.

For $\mathfrak{l}_-^\circ$, similar arguments as above show that on $E^\circ(x)$, 
\begin{align*}
&\sup_{v \in T, w \in B_n(L_w^\circ(x))} \inf_{u \in B_n(L_u^\circ), s \in S}  \mathfrak{l}_-^\circ(s,u,v,w)\\
& =\sup_{\substack{\alpha \in \Lambda_T, w \in \R^n }} \inf_{ s\in S } \big\{\zeta \alpha-\iprod{s}{w}\big\}, \quad \hbox{s.t. } \sup_{v\in T,\pnorm{v}{}=\alpha} \iprod{h}{v}\geq \pnorm{\alpha g-\sqrt{n}w}{}\\
& = \sup_{\alpha \in \Lambda_T,  r_T(\alpha)\geq 0}\sup_{r \in B_n(r_T(\alpha))} \inf_{s \in S} \big\{\zeta \alpha-n^{-1/2}\alpha \iprod{g}{s}-\iprod{s}{r}\big\}\\
&\stackrel{(\ast)}{\leq} \sup_{\alpha \in \Lambda_T} \inf_{s \in S} \sup_{ r \in B_n(r_T(\alpha)) }\big\{\zeta \alpha-n^{-1/2}\alpha \iprod{g}{s}-\iprod{s}{r}\big\}\\
& = \sup_{\alpha \in \Lambda_T}\inf_{s \in S} \big\{\zeta \alpha-n^{-1/2}\alpha \iprod{g}{s}+\pnorm{s}{} r_T(\alpha)\big\}=\mathsf{F}_-^\circ(g,h),
\end{align*}
as desired.
In ($\ast$) above, we drop the constraint $r_T(\alpha)\geq 0$ and use the trivial inequality $\sup_r \inf_s \leq \inf_s \sup_r$.
\end{proof}

Finally we provide a  concentration inequality for $\mathsf{F}_{\pm}^\circ$.

\begin{proposition}\label{prop:var_F_sqrt}
There exists some absolute constant $C>0$ such that for $x\geq 1$,
\begin{align*}
\Prob\big(\abs{(\mathrm{id}-\E)\mathsf{F}_{\pm}^\circ(g,h)}>C\cdot \err_n^\circ(T,S)\cdot \sqrt{x} \big)\leq e^{-x}.
\end{align*}
\end{proposition}
\begin{proof}
Note that for any $g_1,g_2\in \R^n,h_1,h_2\in \R^p$,
\begin{align*}
\abs{\mathsf{F}_{\pm}^\circ(g_1,h_1)-\mathsf{F}_{\pm}^\circ(g_2,h_2)}\leq n^{-1/2} r(T)r(S)\cdot\big(\pnorm{g_1-g_2}{}+\pnorm{h_1-h_2}{}\big). 
\end{align*}
The claim now follows from Gaussian concentration.
\end{proof}

\subsection{Proof of Theorem \ref{thm:sqrt_process}}

\begin{proof}[Proof of Theorem \ref{thm:sqrt_process}]
First consider $\mathscr{E}_+^\circ$. Combining the proceeding Propositions \ref{prop:primal_cost_err_sqrt}-\ref{prop:gordon_cost_F_sqrt}, we have for any $z \in \R$,
\begin{align*}
\Prob\big( \mathscr{E}_+^\circ\geq z \big)&\leq \Prob \bigg( \sup_{v \in T,s\in S, w \in B_n(L_w^\circ(x))}\inf_{u \in B_n(L_u^\circ)}\mathfrak{h}_+^\circ(s,u,v,w) \geq z- \rem^\circ(x,L_u^\circ)\bigg)+\Prob(E^\circ(x)^c)\\
&\leq 2\Prob\bigg( \sup_{v \in T, s\in S, w \in B_n(L_w^\circ(x))}\inf_{u \in B_n(L_u^\circ)}\mathfrak{l}_+^\circ(u,v,w)\geq z-\rem^\circ(x,L_u^\circ)\bigg)+\Prob(E^\circ(x)^c)\\
&\leq 2\Prob\big(\mathsf{F}_+^\circ(g,h) \geq z-\rem^\circ(x,L_u^\circ)  \big)+2 \Prob(E^\circ(x)^c).
\end{align*}
By letting $L_u \to \infty$ we may kill the term $\rem^\circ(x,L_u^\circ)$. Now using Proposition \ref{prop:var_F_sqrt},
\begin{align*}
\Prob\big(\mathsf{F}_+^\circ(g,h) \geq z  \big)\leq \bm{1}\big(\E\mathsf{F}_+^\circ(g,h) \geq z - C\cdot \err_n^\circ(T,S)\cdot \sqrt{x} \big)+e^{-x}.
\end{align*}
Finally using  $\Prob(E^\circ(x)^c)\leq e^{-x}$ for $x\geq 1$ and choosing $z\equiv \E\mathsf{F}_+^\circ(g,h)+ 2C\cdot \err_n(T,S)\cdot \sqrt{x} $ to conclude the right tail control of $\mathscr{E}_+^\circ$. The claim for the right tail control of $\mathscr{E}_-^\circ$ follows from completely similar arguments. 
\end{proof}

\subsection{Proof of Theorem \ref{thm:sqrt_process_zetaS}}

\begin{lemma}\label{lem:upper_bound_E_zeta_sqrt}
	Recall $\zeta_S= n^{-1/2}w_1(S)$. Suppose $0 \in T$. Then
	\begin{align*}
	\max_{*=\pm}E_{\ast,\zeta_S}^\circ(T,S)\leq \frac{w(T) r(S) }{\sqrt{n}}\cdot \bigg(1+\frac{1}{ \sqrt{d(T)} }\bigg).
	\end{align*}
\end{lemma}
\begin{proof}
	For $E_{+,\zeta_S}^\circ(T,S)$, by expanding $\sup_{s \in S, \pnorm{s}{}=\beta}$ to $\sup_{s \in S}$, we have
	\begin{align*}
	&E_{+,\zeta_S}^\circ(T,S)\leq \E \sup_{\alpha \in \Lambda_T,\beta \in \Lambda_S}\bigg\{ \frac{\alpha}{\sqrt{n}}\Big(\sup_{s \in S}\iprod{g}{s}-w_1(S) \Big)+ \frac{\beta}{\sqrt{n}} \sup_{v\in T,\pnorm{t}{}=\alpha} \iprod{h}{t}\bigg\}\\
	&\leq \frac{r(T)}{\sqrt{n}}\E \bigabs{\sup_{s \in S}\iprod{g}{s}-w_1(S)}+ \frac{r(S) w(T)}{\sqrt{n}}\leq \frac{r(S) w(T)}{\sqrt{n}}\cdot \bigg(1+\frac{1}{ \sqrt{d(T)} }\bigg). 
	\end{align*}
	Here the last inequality follows by using Gaussian-Poincar\'e inequality upon noting that the map $g\mapsto \sup_{s \in S}\iprod{g}{s}$ is $r(S)$-Lipschitz. 
	
	On the other hand, for $E_{-,\zeta_S}^\circ(T,S)$, 
	\begin{align*}
	&E_{-,\zeta_S}^\circ(T,S)\leq \E \sup_{\alpha \in \Lambda_T}\inf_{\beta \in \Lambda_S}\bigg\{\zeta_S\alpha- \frac{\alpha}{\sqrt{n}}\sup_{s \in S, \pnorm{s}{}=\beta}\iprod{g}{s}+ \frac{r(S)}{\sqrt{n}} \bigabs{\sup_{v\in T,\pnorm{t}{}=\alpha} \iprod{h}{t}}\bigg\}\\
	&\leq \E \sup_{\alpha \in \Lambda_T} \bigg\{\alpha\Big(\zeta_S - \frac{1}{\sqrt{n}}\sup_{s \in S }\iprod{g}{s}\Big)+ \frac{r(S)}{\sqrt{n}} \bigabs{\sup_{v\in T,\pnorm{t}{}=\alpha} \iprod{h}{t}}\bigg\}\\
	&\leq \frac{r(T)}{\sqrt{n}}\E \bigabs{\sup_{s \in S}\iprod{g}{s}-w_1(S)}+ \frac{r(S) w(T)}{\sqrt{n}}\leq \frac{r(S) w(T)}{\sqrt{n}}\cdot \bigg(1+\frac{1}{ \sqrt{d(T)} }\bigg),
	\end{align*}
	as desired. 
\end{proof}

\begin{proof}[Proof of Theorem \ref{thm:sqrt_process_zetaS}]
	Using Theorem \ref{thm:sqrt_process} and Lemma \ref{lem:upper_bound_E_zeta_sqrt}, with the desired probability,
	\begin{align*}
	\sup_{v \in T} \Big|\sup_{s \in S}\iprod{s}{G_n v}- \zeta_S\pnorm{v}{}\Big|\leq \frac{r(S)w(T)}{\sqrt{n}}\cdot \bigg(1+\frac{1}{ \sqrt{d(T)} }\bigg)+ C\cdot r(T)r(S) \sqrt{\frac{x}{n}}. 
	\end{align*}
	The claim follows.
\end{proof}

\section{Remaining proofs for Section \ref{section:application}}\label{section:proof_application}

\subsection{Proof of Theorem \ref{thm:DM}}\label{subsection:proof_DM}

\begin{proof}[Proof of Theorem \ref{thm:DM}]
	By \cite[Exercise 11.3.2-(b)]{vershynin2018high}, $B_n(r_-)\subset \mathrm{conv}(G_n T)\subset B_n(r_+)$ if and only if
	\begin{align}\label{ineq:DM_variational}
	r_- \pnorm{v}{}\leq \sup_{s \in T}\iprod{G_n s}{v}=\sup_{s \in T}\iprod{s}{G_n^\top v}\leq r_+ \pnorm{v}{},\quad \forall v \in B_n.
	\end{align}
	As $\mathrm{conv}(G_n T)=G_n(\mathrm{conv}(T))$, we may assume without loss of generality that $T$ is convex. We will use Theorem \ref{thm:sqrt_process} with the role of $n,p$ flipped (but the scaling $n^{-1/2}$ remains the same). To this end,  we first compute, with $\zeta_T = w_1(T)/\sqrt{n}$,
	\begin{align*}
	E_{+,\zeta_T}^\circ(B_n,T)&= \E \sup_{\beta \in [0,1]}\beta \sup_{\alpha \in \Lambda_T}\bigg\{ \frac{1}{\sqrt{n}}\sup_{t \in T, \pnorm{t}{}=\alpha}\iprod{h}{t}+ \frac{\alpha  \pnorm{g}{}}{\sqrt{n}} -\zeta_T\bigg\}\\
	& = \E \bigg\{ \sup_{\alpha \in \Lambda_T}\bigg( \frac{1}{\sqrt{n}}\sup_{t \in T, \pnorm{t}{}=\alpha}\iprod{h}{t}+\alpha\bigg)-\zeta_T  \bigg\}_+ + \bigo\bigg(\frac{r(T)}{\sqrt{n}}\bigg).
	\end{align*}
	As the map $h\mapsto \sup_{\alpha \in \Lambda_T}\big( n^{-1/2}\sup_{t \in T, \pnorm{t}{}=\alpha}\iprod{h}{t}+\alpha\big)$ is $n^{-1/2} r(T)$-Lipschitz, by Gaussian concentration, 
	\begin{align*}
	E_{+,\zeta_T}^\circ(B_n,T)  = \bigg\{\E \sup_{\alpha \in \Lambda_T}\bigg( \frac{1}{\sqrt{n}}\sup_{t \in T, \pnorm{t}{}=\alpha}\iprod{h}{t}+\alpha\bigg)-\frac{w_1(T)}{\sqrt{n}}  \bigg\}_+ + \bigo\bigg(\frac{r(T)}{\sqrt{n}}\bigg).
	\end{align*}
	Similarly we have
	\begin{align*}
	E_{-,\zeta_T}^\circ(B_n,T) = \bigg\{ \frac{w_1(T)}{\sqrt{n}} - \E\sup_{\alpha \in \Lambda_T}\bigg( \frac{1}{\sqrt{n}}\sup_{t \in T, \pnorm{t}{}=\alpha}\iprod{h}{t}-\alpha\bigg) \bigg\}_+ + \bigo\bigg(\frac{r(T)}{\sqrt{n}}\bigg).
	\end{align*}
	For notational simplicity, let us write $E_{\pm,\zeta_T}^\circ(B_n,T)  \equiv \mathcal{E}_{\pm}^\circ(T)+\bigo(r(T)/\sqrt{n})$. Now using Theorem \ref{thm:sqrt_process},  with probability at least $1-e^{-x}$,
	\begin{align*}
	& \sup_{v \in  B_n}\pm\bigg( \sup_{s \in T}\iprod{s}{G_n^\top v}- \frac{w_1(T)}{\sqrt{n} } \pnorm{v}{} \bigg)\leq  \mathcal{E}_{\pm}^\circ(T) + C\cdot r(T) \sqrt{ \frac{x}{n} }.
	\end{align*}
	The claim follows using the homogeneity in the suprema over $v \in B_n$. 
\end{proof}

\subsection{Proof of Proposition \ref{prop:spiked_cov}}\label{subsection:proof_spiked_all}

We shall first compute the Gaussian width of spherical sections of the ellipsoid associated with the spiked covariance model. 

\begin{lemma}\label{lem:gw_ellipsoid}
	Fix $\lambda\geq 0$ and an orthonormal system $\{\mathsf{v}_j: j \in [p]\}\subset \R^p$. Consider the spiked covariance $\Sigma=I_p+\lambda \sum_{j=1}^r \mathsf{v}_j\mathsf{v}_j^\top$. For $ h \in \R^p$, let $Z\in \R^p$ be defined by $Z_j\equiv \langle h,\mathsf{v}_j\rangle$ for $j \in [p]$, and let  $\delta_n\equiv {  \pnorm{Z_{(r:p]}}{}^2 }/{n}$, $\Xi_n\equiv n^{-1/2}\sqrt{1+\lambda}\cdot \pnorm{Z_{[1:r]}}{}$. Then
	\begin{align*}
	\sup_{\alpha \in [0,\sqrt{1+\lambda}] }\biggabs{\frac{1}{\sqrt{n}}\sup_{t \in \Sigma^{1/2}(B_p), \pnorm{t}{}=\alpha}\iprod{h}{t}-\sqrt{\delta_n}\cdot \Gamma_\lambda(\alpha)}= \bigo(\Xi_n),
	\end{align*}
	where  
	\begin{align}\label{def:Gamma_lambda}
	\Gamma_\lambda(\alpha)\equiv \sqrt{ (1\wedge \alpha^2) \Big(1+\frac{1}{\lambda}\Big) - \frac{\alpha^2}{\lambda}  },\quad \alpha \in [0,\sqrt{1+\lambda}].
	\end{align}
\end{lemma}
\begin{proof}
	Under the assumed spiked covariance model, 
	\begin{align}\label{ineq:spike_cov_01}
	\pnorm{\Sigma}{\op}=1+\lambda,\quad \Sigma^{1/2}=I_p+u_{\lambda}\sum\nolimits_{j \in [r]} \mathsf{v}_j\mathsf{v}_j^\top,
	\end{align}
	where  $u_{\lambda}\geq 0$ is the unique non-negative solution of $u_{\lambda}^2+2u_{\lambda}=\lambda$. For $\alpha \in [0,\sqrt{1+\lambda}]$, we start computing the Gaussian width
	\begin{align}\label{ineq:spike_cov_1}
	\sup_{v \in B_p, \pnorm{\Sigma^{1/2}v}{}=\alpha}\iprod{h}{\Sigma^{1/2}v} = \sup_{ \substack{v \in B_p,\\ \pnorm{v}{}^2+\lambda\sum_{j=1}^r \iprod{v}{\mathsf{v}_j}^2=\alpha^2} }\bigg(\iprod{h}{v}+u_{\lambda} \sum_{j=1}^r  \iprod{v}{\mathsf{v}_j}\iprod{h}{\mathsf{v}_j}\bigg). 
	\end{align}
	Using $Z$ instead of $h$, we have 
	\begin{align}\label{ineq:spike_cov_2}
	&\biggabs{ \sup_{ \substack{v \in B_p,\\ \pnorm{v}{}^2+\lambda \sum_{j=1}^r \iprod{v}{\mathsf{v}_j}^2=\alpha^2} }\bigg(\iprod{h}{v}+u_{\lambda} \sum_{j=1}^r  \iprod{v}{\mathsf{v}_j}\iprod{h}{\mathsf{v}_j}\bigg)-  \sup_{ \substack{v \in B_p,\\ \pnorm{v}{}^2+ \lambda \sum_{j=1}^r \iprod{v}{\mathsf{v}_j}^2=\alpha^2} }\iprod{h}{v} }\nonumber\\
	&\qquad \leq  \sup_{ {v \in B_p, \pnorm{v}{}^2+ \lambda \sum_{j=1}^r \iprod{v}{\mathsf{v}_j}^2=\alpha^2} }  u_{\lambda}\biggabs{\sum_{j=1}^r  \iprod{v}{\mathsf{v}_j}Z_j} \nonumber\\
	&\qquad =  \sup_{ \substack{b \in B_p, \pnorm{b}{}^2+ \lambda\sum_{j=1}^r  b_j^2=\alpha^2} } u_{\lambda} \biggabs{\sum_{j=1}^r  b_j Z_j} \nonumber \\
	&\qquad \leq \sup_{b_0 \in [\alpha/\sqrt{1+\lambda},1\wedge \alpha]}\sup_{ \substack{\pnorm{b}{}=b_0,\\ u_{\lambda}^2\sum_{j=1}^r  b_j^2\leq \alpha^2-b_0^2}}  u_{\lambda}\biggabs{\sum_{j=1}^r  b_j Z_j}\nonumber\\
	&\qquad \leq  \sup_{b_0 \in [\alpha/\sqrt{1+\lambda},1\wedge \alpha]} \sqrt{\alpha^2-b_0^2}\cdot  \pnorm{Z_{[1:r]}}{} \leq  \sqrt{\lambda}\cdot \pnorm{Z_{[1:r]}}{}.
	\end{align}
	On the other hand, we may compute 
	\begin{align}\label{ineq:spike_cov_3}
	&\sup_{ \substack{v \in B_p,\\ \pnorm{v}{}^2+\lambda \sum_{j=1}^r \iprod{v}{\mathsf{v}_j}^2=\alpha^2} }\iprod{h}{v}
	=\sup_{\gamma \in [\alpha/\sqrt{1+\lambda },1\wedge \alpha]} \sup_{\substack{\pnorm{v}{}=\gamma,\\  \sum_{j=1}^r  v_j^2=(\alpha^2-\gamma^2)/\lambda }}\iprod{Z}{v} \nonumber\\
	& = \sup_{\gamma \in [\alpha/\sqrt{1+\lambda},1\wedge \alpha]}\bigg\{\sqrt{\frac{\alpha^2-\gamma^2}{\lambda}}\cdot \pnorm{Z_{[1:r]}}{}+\sqrt{\gamma^2-\frac{\alpha^2-\gamma^2}{\lambda}}\cdot \pnorm{Z_{(r:p]}}{}\bigg\}\nonumber\\
	& = \sup_{\gamma \in [\alpha/\sqrt{1+\lambda},1\wedge \alpha]} \sqrt{\gamma^2-\frac{\alpha^2-\gamma^2}{\lambda}}\cdot \pnorm{Z_{(r:p]}}{}+ \bigo\big(\pnorm{Z_{[1:r]}}{} \big),
	\end{align}
	where the last inequality follows as $
	\sup_{\gamma \in [\alpha/\sqrt{1+\lambda},1\wedge \alpha]}\sqrt{(\alpha^2-\gamma^2)/{\lambda}} = \sqrt{{\alpha^2}/(1+\lambda)}\leq 1$. 
	The claim now follows by combining (\ref{ineq:spike_cov_1})-(\ref{ineq:spike_cov_3}).
\end{proof}

The next two analytic lemmas will be crucial in our calculations in the proof of Proposition \ref{subsection:proof_spiked_all}.
\begin{lemma}\label{lem:sup_H_bar}
Let
\begin{align*}
\overline{\mathsf{H}}_\delta(\eta)\equiv \Big(\sqrt{1+\lambda \eta}+\sqrt{\delta(1-\eta)}\Big)^2,\quad \eta \in [0,1].
\end{align*}
Recall $\overline{\Psi}_\delta$ defined in (\ref{def:psi_spiked_cov}). Then $\overline{\Psi}_\delta$ is non-decreasing on $[\sqrt{\delta},\infty)$ and
\begin{align*}
\max_{\eta \in [0,1]}\overline{\mathsf{H}}_\delta(\eta) = \overline{\mathsf{H}}_\delta\big(\overline{\eta}_\ast\big) = \overline{\Psi}_\delta\big(\lambda\vee\sqrt{\delta}\big)\geq (1+\sqrt{\delta})^2. 
\end{align*}
\end{lemma}
\begin{proof}
We first the monotonicity claim. For $\overline{\Psi}_\delta$, it is easy to calculate $\overline{\Psi}_\delta'(\lambda) = (\lambda^2-\delta)/\lambda^2\geq 0$ on $[\sqrt{\delta},\infty)$. This means $\overline{\Psi}_\delta$ is non-decreasing on $[\sqrt{\delta},\infty)$.

Next we shall solve the maximization problem. Let $\overline{\mathsf{G}}_\delta(\eta)\equiv \sqrt{1+\lambda \eta}+\sqrt{\delta(1-\eta)}$. Then we may compute 
\begin{align*}
\overline{\mathsf{G}}_\delta'(\eta)&= \frac{\lambda}{2\sqrt{1+\lambda \eta}}-\frac{\delta}{2\sqrt{\delta(1-\eta)}}.
\end{align*}
This means the map $\eta \mapsto \overline{\mathsf{H}}_\delta(\eta)=\overline{\mathsf{G}}_\delta^2(\eta)$ attains its maximum at $\overline{\eta}_\ast= \frac{(\lambda^2-\delta)_+}{\lambda(\delta+\lambda)}$, and with some calculations, we have
\begin{align*}
\max_{\eta \in [0,1]}\overline{\mathsf{H}}_\delta(\eta) = \overline{\mathsf{H}}_\delta\big(\overline{\eta}_\ast\big) = \overline{\Psi}_\delta\big(\lambda\vee\sqrt{\delta}\big)\geq \overline{\Psi}_\delta(\sqrt{\delta})=(1+\sqrt{\delta})^2, 
\end{align*}
as desired.
\end{proof}

\begin{lemma}\label{lem:sup_H}
	Let
	\begin{align*}
	\mathsf{H}_\delta(\eta)\equiv \Big(\sqrt{1+\lambda \eta}+\sqrt{\delta(1-\eta)}\Big)^2-(1+\lambda \eta),\quad \eta \in [0,1].
	\end{align*}
	Recall $\Psi_\delta$ defined in (\ref{def:psi_spiked_cov}), and let
	\begin{align}\label{def:eta_spiked_cov}
	\eta_\delta(\lambda)\equiv \frac{1}{2\lambda}\bigg[(\lambda-1)-\frac{\lambda+1}{\sqrt{\delta+4\lambda}}\cdot \sqrt{\delta}\bigg].
	\end{align}
	Then both $\Psi_\delta,\eta_\delta$ are non-decreasing on $[1+\sqrt{\delta},\infty)$ and
	\begin{align*}
	\max_{\eta \in [0,1]} \mathsf{H}_\delta(\eta) = \mathsf{H}_\delta(\eta_\ast) =\Psi_\delta\big(\lambda \vee \{1+\sqrt{\delta}\}\big)\geq 2\sqrt{\delta}+\delta
	\end{align*}
	with the maximizer $
	\eta_\ast \equiv \eta_\delta(\lambda)\cdot\big(\lambda \vee \{1+\sqrt{\delta}\}\big)$.	
\end{lemma}
\begin{proof}
	We first prove the monotonicity claim. For $\Psi_\delta$, patient calculations yield
	\begin{align*}
	\Psi_\delta'(\lambda) &= \frac{ \sqrt{\delta}}{2\lambda^2\sqrt{\delta+4\lambda}}\cdot\big(2\lambda^2-\sqrt{\delta}\sqrt{\delta+4\lambda}-2\lambda-\delta\big)\equiv \frac{ \sqrt{\delta}}{2\lambda^2\sqrt{\delta+4\lambda}} \cdot \psi_0(\lambda). 
	\end{align*}
	Note that $\psi_0(1+\sqrt{\delta})=0$ and for $\lambda\geq 1+\sqrt{\delta}$,
	\begin{align*}
	\psi_0'(\lambda) = 4\lambda -  \frac{2\sqrt{\delta}}{\sqrt{\delta+4\lambda}}-2>4\lambda-4\geq 0.
	\end{align*}
	This means that $\psi_0\geq 0$ on $I_\delta\equiv [1+\sqrt{\delta},\infty)$, and hence $\Psi_\delta' \geq 0$ on $I_\delta$, so $\Psi_\delta$ is non-decreasing on $I_\delta$. For $\eta_\delta$, again patient calculations lead to
	\begin{align*}
	\eta_\delta'(\lambda)= \frac{\delta^{3/2}+2\sqrt{\delta}\lambda^2+6\sqrt{\delta}\lambda+(\delta+4\lambda)^{3/2}
	}{2\lambda^2(\delta+4\lambda)^{3/2}}\geq 0,
	\end{align*}
	so $\eta_\delta$ is non-decreasing on $I_\delta$ as well. 
	
	Next we shall solve the maximum and the maximizer of $\eta \mapsto \mathsf{H}_\delta(\eta)$ over $\eta \in [0,1]$. Expanding the square in $\mathsf{H}_\delta$, we have
	\begin{align*}
	\mathsf{H}_\delta(\eta) = 2\sqrt{\delta}\cdot \sqrt{(1+\lambda \eta)(1-\eta)}+ \delta(1-\eta).
	\end{align*}
	So 	with $\pi_\delta(\eta)\equiv (1+\lambda \eta)(1-\eta)= \frac{(\lambda+1)^2}{4\lambda}-\lambda \big(\frac{\lambda-1}{2\lambda}-\eta\big)^2$,
	\begin{align*}
	\mathsf{H}'_\delta(\eta)&= \sqrt{\delta}\cdot \frac{ (\lambda-1)-2\lambda \eta}{\sqrt{(1+\lambda \eta)(1-\eta)}} - \delta = \sqrt{\delta}\cdot\left[\frac{2\lambda \big(\frac{\lambda-1}{2\lambda}-\eta\big)  }{\sqrt{\pi_\delta(\eta)}}-\sqrt{\delta}\right],\\
	\mathsf{H}''_\delta(\eta)&=-\frac{2\lambda \sqrt{\delta}}{\pi_\delta^{3/2}(\eta)}\bigg[\pi_\delta(\eta)+\lambda\bigg(\frac{\lambda-1}{2\lambda}-\eta\bigg)^2\bigg]\leq -\frac{4\lambda \sqrt{\lambda \delta}}{\lambda+1}.
	\end{align*}
	This means $\mathsf{H}'_\delta(0) = \sqrt{\delta}(\lambda-(1+\sqrt{\delta}))$ and $\mathsf{H}'_\delta$ is decreasing on $[0, (\lambda-1)_+/2\lambda]$. We now consider two cases:
	\begin{enumerate}
		\item First consider the case $\lambda \leq 1+\sqrt{\delta}$. Then $\mathsf{H}'_\delta(0)\leq 0$ so $\mathsf{H}'_\delta\leq 0$ globally on $[0,1]$. Therefore $\mathsf{H}_\delta$ is non-increasing on $[0,1]$ and the maximal value is
		\begin{align*}
		\max_{\eta \in [0,1]} \mathsf{H}_\delta(\eta) = \mathsf{H}_\delta(0) = 2\sqrt{\delta}+\delta. 
		\end{align*}
		\item Next consider the case $\lambda> 1+\sqrt{\delta}$. Then $\mathsf{H}'_\delta(0)>0$ and $\mathsf{H}'_\delta\big((\lambda-1)/2\lambda\big)<0$ which means that $\mathsf{H}'_\delta\geq 0$ on $[0, \eta_\ast]$ for some $\eta_\ast \in (0, (\lambda-1)/2\lambda)$ that solves $\mathsf{H}'_\delta(\eta_\ast)=0$. Some calculations show that
		\begin{align*}
		\eta_\ast = \eta_\delta(\lambda)=\frac{1}{2\lambda}\big[(\lambda-1)-\zeta_\ast\big]\geq 0,\quad \zeta_\ast \equiv \frac{\lambda+1}{\sqrt{\delta+4\lambda}}\cdot \sqrt{\delta},
		\end{align*}
		and the maximal value is
		\begin{align*}
		\max_{\eta \in [0,1]} \mathsf{H}_\delta(\eta) &= \mathsf{H}_\delta(\eta_\ast) = 2\zeta_\ast+\delta(1-\eta_\ast)\\
		& = \frac{\lambda+1}{\sqrt{\delta+4\lambda}}\bigg[2\sqrt{\delta}+\bigg(\frac{\sqrt{\delta}+\sqrt{\delta+4\lambda}}{2\lambda}\bigg)\cdot \delta\bigg]\\
		&= \Psi_\delta(\lambda)  \geq \Psi_\delta(1+\sqrt{\delta}) = 2\sqrt{\delta}+\delta. 
		\end{align*}
	\end{enumerate}
	The proof is complete.
\end{proof}

\begin{proof}[Proof of Proposition \ref{prop:spiked_cov}]
	In the proof we shall write $\Sigma_{r,\lambda}$ simply as $\Sigma$ for notational simplicity, and $h\sim \mathcal{N}(0,I_p)$. We shall perform detailed calculations for the case of sample covariance error using Lemma \ref{lem:sup_H}; the calculations for the other case are easier with the help of Lemma \ref{lem:sup_H_bar}, so the details will be omitted for simplicity.
	
	 Recall $\mathsf{F}_+$ defined in (\ref{def:F}), and $\Gamma_\lambda$ defined in (\ref{def:Gamma_lambda}). Then $\E \mathsf{F}_+(h)= E_\ast(\Sigma_{r,\lambda})$ and by Lemma \ref{lem:gw_ellipsoid},
	\begin{align*}
	\mathsf{F}_+(h) &= \sup_{ \alpha \in [0,\sqrt{1+\lambda}] } \Big\{\Big(\alpha+ \sqrt{\delta_n} \cdot \Gamma_\lambda (\alpha)+ \bigo(\Xi_n)\Big)^2-\alpha^2\Big\}.
	\end{align*}
	Here $\mathcal{O}$ is uniform in $\alpha \in [0,\sqrt{1+\lambda}]$.
	Now we may compute
	\begin{align*}
	& \bigabs{\E \mathsf{F}_+(h) - \sup_{\alpha \in [0,\sqrt{1+\lambda}]} \big\{\big(\alpha+ \sqrt{\delta}\cdot  \Gamma_\lambda(\alpha) \big)^2-\alpha^2\big\}}\\
	&\leq \E \sup_{\alpha \in [0,\sqrt{1+\lambda}]} \bigabs{\Big(\alpha+ \sqrt{\delta_n}\cdot  \Gamma_\lambda(\alpha)+\bigo(\Xi_n) \Big)^2-  \big(\alpha+ \sqrt{\delta}\cdot  \Gamma_\lambda(\alpha) \big)^2}  \\
	&\leq \E \sup_{\alpha \in [0,\sqrt{1+\lambda}]} \Big(\abs{\sqrt{\delta_n}-\sqrt{\delta}}\cdot \Gamma_\lambda(\alpha)+\bigo(\Xi_n) \Big) \Big(2\alpha+\abs{\sqrt{\delta_n}+\sqrt{\delta}}\cdot \Gamma_\lambda(\alpha)+\bigo(\Xi_n)\Big)\\
	&\lesssim \E^{1/2} \big(\abs{\sqrt{\delta_n}-\sqrt{\delta}}+\Xi_n\big)^2\cdot \E^{1/2}\bigg(\sqrt{\delta_n}+\sqrt{\delta}+ \sqrt{1+\lambda}\cdot \Big\{1\vee \frac{\pnorm{Z_{[1:r]}}{}}{\sqrt{n}}\Big\} \bigg)^2\\
	& \equiv A_1\cdot A_2. 
	\end{align*}
	For $A_1$,  we have
	\begin{align*}
	A_1^2&\lesssim \E \big(\sqrt{\delta_n}-\sqrt{\delta}\big)^2+ \frac{(1+\lambda)r}{n}\\
	&\lesssim \frac{1}{n}\cdot \Big[\var\big(\pnorm{Z_{(r:p]}}{}\big)+ \big(\E\pnorm{Z_{(r:p]} }{}-\sqrt{p-r}\big)^2+(1+\lambda)r\Big] \lesssim \frac{(1+\lambda)r}{n}. 
	\end{align*}
	For $A_2$, we have an easy estimate 
	\begin{align*}
	A_2^2 \lesssim \delta + (1+\lambda)\bigg(1\vee \frac{r}{n}\bigg) \asymp \delta+1+\lambda+\frac{(1+\lambda)r}{n}.
	\end{align*}
	Combining these estimates, we arrive at
	\begin{align}\label{ineq:spike_cov_4}
	& \bigabs{\E \mathsf{F}_+(h) - \sup_{\alpha \in [0,\sqrt{1+\lambda}]} \big\{\big(\alpha+ \sqrt{\delta}\cdot  \Gamma_\lambda(\alpha) \big)^2-\alpha^2\big\}}\nonumber\\
	&\lesssim (1+\lambda)\sqrt{\frac{r}{n}}\sqrt{1+\frac{\delta}{1+\lambda}+\frac{r}{n}}.
	\end{align}
	Finally we compute by using Lemma \ref{lem:sup_H} that
	\begin{align}\label{ineq:spike_cov_5}
	&\sup_{\alpha \in [0,\sqrt{1+\lambda}]} \big\{\big(\alpha+ \sqrt{\delta}\cdot  \Gamma_\lambda(\alpha) \big)^2-\alpha^2\big\}\nonumber\\
	& = \big\{(1+\sqrt{\delta})^2-1\big\} \vee \sup_{\alpha \in [1,\sqrt{1+\lambda}]}  \bigg\{\bigg(\alpha+ \sqrt{\delta}\cdot  \sqrt{1-\frac{\alpha^2-1}{\lambda}} \bigg)^2-\alpha^2\bigg\} \nonumber\\
	&=\big\{2\sqrt{\delta}+\delta\big\} \vee \sup_{\eta \in [0,1]} \bigg\{\Big(\sqrt{1+\lambda \eta}+\sqrt{\delta(1-\eta)}\Big)^2-(1+\lambda \eta)\bigg\} \nonumber\\
	& = \big\{2\sqrt{\delta}+\delta\big\} \vee \max_{\eta \in [0,1]} \mathsf{H}_\delta(\eta) = \Psi_\delta\big(\lambda \vee \{1+\sqrt{\delta}\}\big).
	\end{align}
	The claimed inequality follows by combining (\ref{ineq:spike_cov_4})-(\ref{ineq:spike_cov_5}).
\end{proof}

\appendix

\section{Proof of Theorem \ref{thm:CGMT}}\label{section:proof_CGMT}

We shall prove the following simpler version of Theorem \ref{thm:CGMT} with $n_1=n,n_2=0,m_1=m,m_2=0$. The proof of the case presented in Theorem \ref{thm:CGMT} differs by notational formalities.

We need Gordon's Gaussian min-max theorem \cite{gordon1985some,gordon1988milman}, formally recorded below:

\begin{theorem}[Gordon's Gaussian min-max theorem]\label{thm:gordon_min_max}
	Let $(X_{ij}),(Y_{ij})$, $i \in [n],j\in [m]$ be two centered Gaussian vectors such that
	\begin{enumerate}
		\item[(G1)] $\E X_{ij}^2 = \E Y_{ij}^2$ for all  $i,j \in [n]\times [m]$,
		\item[(G2)] $\E X_{ij}X_{ij'} \geq \E Y_{ij}Y_{ij'}$ for all $i \in [n], j,j' \in [m]$,
		\item[(G3)] $\E X_{ij} X_{i' j'}\leq \E Y_{ij} Y_{i' j'}$ for all $i\neq i' \in [n], j,j' \in [m]$.  
	\end{enumerate}
	Then for any $\{\lambda_{ij}: i\in[n], j\in [m]\}$, 
	\begin{align*}
	\Prob\bigg(\bigcap_{i \in [n]}\bigcup_{j \in [m]}\big\{X_{ij}>\lambda_{ij}\big\}\bigg)\leq \Prob\bigg(\bigcap_{i \in [n]}\bigcup_{j \in [m]}\big\{Y_{ij}>\lambda_{ij}\big\}\bigg).
	\end{align*}
\end{theorem}
A proof of this comparison inequality can be found in \cite[Corollary 3.13]{ledoux2013probability}.

\begin{proof}[Proof of Theorem \ref{thm:CGMT}]
	\noindent (1). Let $X(u,v)$ (resp. $Y(u,v)$) be the Gaussian process defined via
	\begin{align*}
	X(u,v)&= \pnorm{v}{} \iprod{g}{u} + \pnorm{u}{} \iprod{h}{v},\quad Y(u,v)=  \iprod{u}{G v} + \pnorm{u}{}\pnorm{v}{}z
	\end{align*}
	where $z$ is an independent standard normal random variable. Now we may compute the covariance structure:
	\begin{align*}
	\E X(u,v)X(u',v') 
	& = \iprod{u}{u'}\cdot \pnorm{v}{}\pnorm{v'}{}+ \iprod{v}{v'}\cdot \pnorm{u}{}\pnorm{u'}{},\\
	\E Y(u,v)Y(u',v') 
	& = \iprod{u}{u'}\cdot \iprod{v}{v'} + \pnorm{u}{}\pnorm{u'}{}\cdot  \pnorm{v}{}\pnorm{v'}{}.
	\end{align*}
	So we have
	\begin{align*}
	&\E Y(u,v)Y(u',v') - \E X(u,v)X(u',v') \\
	&= \big(\pnorm{u}{}\pnorm{u'}{}-\iprod{u}{u'}\big)\big(\pnorm{v}{}\pnorm{v'}{}-\iprod{v}{v'}\big)\geq 0. 
	\end{align*}
	This verifies (G1), (G3) in Theorem \ref{thm:gordon_min_max}, where (G2) is verified with equality. Consequently, the left tails satisfy
	\begin{align*}
	&\Prob\bigg(\min_{u \in U}\max_{v \in V} \big(Y(u,v)+Q(u,v)\big) \leq t \bigg)\\
	&\leq \Prob\bigg(\min_{u \in U}\max_{v \in V} \big(X(u,v)+Q(u,v)\big) \leq t \bigg)\leq \Prob\big(\Phi^{\textrm{a}}(g,h)\leq t\big). 
	\end{align*}
	So we only need to `eliminate' the `twist term' $\pnorm{u}{}\pnorm{v}{}z$ in $Y(u,v)$. This is done by a simple conditioning
	\begin{align*}
	&\Prob\bigg(\min_{u \in U}\max_{v \in V} \big(Y(u,v)+Q(u,v)\big) \leq t \bigg) \\
	&\geq \frac{1}{2} \Prob\bigg(\min_{u \in U}\max_{v \in V} \big(Y(u,v)+Q(u,v)\big) \leq t \Big|z\leq 0\bigg)\\
	& \geq \frac{1}{2} \Prob\bigg(\min_{u \in U}\max_{v \in V} \big(\iprod{u}{G v}+Q(u,v)\big) \leq t  \bigg) = 
	\frac{1}{2}\cdot \Prob\big( \Phi^{\textrm{p}}_+(G)\leq t\big).
	\end{align*}
	The cost for eliminating this twist term is a boosted constant factor $2$ in the left tail estimate. 
	
	\noindent (2). By flipping the role of $u,v$, replacing $(Q,t)$ with $(-Q,-t)$, and using $(G,g,h)\equald (-G,-g,-h)$ we have
	\begin{align*}
	\Prob\big( \Phi^{\textrm{p}}_-(G)\geq t\big)&= \Prob\bigg(\min_{v \in V}\max_{u \in U} \Big( -\iprod{u}{G v} - Q(u,v) \Big) \leq -t\bigg)\\
	&\leq 2\Prob\bigg(\min_{v \in V}\max_{u \in U} \Big(-\pnorm{v}{} \iprod{g}{u} - \pnorm{u}{} \iprod{h}{v}- Q(u,v)\Big)\leq -t\bigg)\\
	&\stackrel{(\ast)}{\leq} 2\Prob\bigg(\max_{u \in U}\min_{v \in V} \Big(-\pnorm{v}{} \iprod{g}{u} - \pnorm{u}{} \iprod{h}{v}- Q(u,v)\Big)\leq -t\bigg)\\
	& = 2 \Prob\big(\Phi^{\textrm{a}}(g,h)\geq t\big).
	\end{align*}
	Here $(\ast)$ follows by using $\max \min \leq \min \max$. 
\end{proof}

\section*{Acknowledgments}
The author would like to sincerely thank Ramon van Handel for several helpful correspondence to an earlier version of the paper.

\bibliographystyle{amsalpha}
\bibliography{mybib}

\providecommand{\bysame}{\leavevmode\hbox to3em{\hrulefill}\thinspace}
\providecommand{\MR}{\relax\ifhmode\unskip\space\fi MR }
\providecommand{\MRhref}[2]{%
  \href{http://www.ams.org/mathscinet-getitem?mr=#1}{#2}
}
\providecommand{\href}[2]{#2}
\begin{thebibliography}{ALPTJ10}

\bibitem[AAGM15]{artstein2015asymptotic}
Shiri Artstein-Avidan, Apostolos Giannopoulos, and Vitali~D. Milman,
  \emph{Asymptotic geometric analysis. {P}art {I}}, Mathematical Surveys and
  Monographs, vol. 202, American Mathematical Society, Providence, RI, 2015.
  \MR{3331351}

\bibitem[ALPTJ10]{adamczak2010quantitative}
Rados{\l}aw Adamczak, Alexander~E. Litvak, Alain Pajor, and Nicole
  Tomczak-Jaegermann, \emph{Quantitative estimates of the convergence of the
  empirical covariance matrix in log-concave ensembles}, J. Amer. Math. Soc.
  \textbf{23} (2010), no.~2, 535--561. \MR{2601042}

\bibitem[BBAP05]{baik2005phase}
Jinho Baik, G\'{e}rard Ben~Arous, and Sandrine P\'{e}ch\'{e}, \emph{Phase
  transition of the largest eigenvalue for nonnull complex sample covariance
  matrices}, Ann. Probab. \textbf{33} (2005), no.~5, 1643--1697. \MR{2165575}

\bibitem[BBvH23]{bandeira2023matrix}
Afonso~S. Bandeira, March~T. Boedihardjo, and Ramon van Handel, \emph{Matrix
  concentration inequalities and free probability}, Invent. Math. \textbf{234}
  (2023), no.~1, 419--487. \MR{4635836}

\bibitem[BCSvH24]{bandeira2024matrix}
Afonso~S Bandeira, Giorgio Cipolloni, Dominik Schr{\"o}der, and Ramon van
  Handel, \emph{Matrix {C}oncentration {I}nequalities and {F}ree {P}robability
  {II}. {T}wo-sided {B}ounds and {A}pplications}, arXiv preprint
  arXiv:2406.11453 (2024).

\bibitem[BvH22]{brailovskaya2022universality}
Tatiana Brailovskaya and Ramon van Handel, \emph{Universality and sharp matrix
  concentration inequalities}, arXiv preprint arXiv:2201.05142 (2022).

\bibitem[BY93]{bai1993limit}
Z.~D. Bai and Y.~Q. Yin, \emph{Limit of the smallest eigenvalue of a
  large-dimensional sample covariance matrix}, Ann. Probab. \textbf{21} (1993),
  no.~3, 1275--1294. \MR{1235416}

\bibitem[CHZ22]{cai2022nonasymptotic}
T.~Tony Cai, Rungang Han, and Anru~R. Zhang, \emph{On the non-asymptotic
  concentration of heteroskedastic {W}ishart-type matrix}, Electron. J. Probab.
  \textbf{27} (2022), Paper No. 29, 40. \MR{4385832}

\bibitem[CMW23]{celentano2023lasso}
Michael Celentano, Andrea Montanari, and Yuting Wei, \emph{The {L}asso with
  general {G}aussian designs with applications to hypothesis testing}, Ann.
  Statist. \textbf{51} (2023), no.~5, 2194--2220. \MR{4678801}

\bibitem[CS23]{chen2023gaussian}
Wei-Kuo Chen and Arnab Sen, \emph{On {$\ell_p$}-{G}aussian-{G}rothendieck
  problem}, Int. Math. Res. Not. IMRN (2023), no.~3, 2344--2428. \MR{4565615}

\bibitem[Dir15]{dirksen2015tail}
Sjoerd Dirksen, \emph{Tail bounds via generic chaining}, Electron. J. Probab.
  \textbf{20} (2015), no. 53, 1--29. \MR{3354613}

\bibitem[DS01]{davidson2001local}
Kenneth~R. Davidson and Stanislaw~J. Szarek, \emph{Local operator theory,
  random matrices and {B}anach spaces}, Handbook of the geometry of {B}anach
  spaces, {V}ol. {I}, North-Holland, Amsterdam, 2001, pp.~317--366.
  \MR{1863696}

\bibitem[Dvo59]{dvoretzky1959theorem}
Aryeh Dvoretzky, \emph{A theorem on convex bodies and applications to {B}anach
  spaces}, Proc. Nat. Acad. Sci. U.S.A. \textbf{45} (1959), 223--226; erratum,
  1554. \MR{105652}

\bibitem[Dvo61]{dvoretzky1961some}
\bysame, \emph{Some results on convex bodies and {B}anach spaces}, Proc.
  {I}nternat. {S}ympos. {L}inear {S}paces ({J}erusalem, 1960), Jerusalem
  Academic Press, Jerusalem; Pergamon, Oxford, 1961, pp.~123--160. \MR{0139079}

\bibitem[GN16]{gine2015mathematical}
Evarist Gin\'{e} and Richard Nickl, \emph{Mathematical foundations of
  infinite-dimensional statistical models}, Cambridge Series in Statistical and
  Probabilistic Mathematics, [40], Cambridge University Press, New York, 2016.
  \MR{3588285}

\bibitem[Gor85]{gordon1985some}
Yehoram Gordon, \emph{Some inequalities for {G}aussian processes and
  applications}, Israel J. Math. \textbf{50} (1985), no.~4, 265--289.
  \MR{800188}

\bibitem[Gor88]{gordon1988milman}
\bysame, \emph{On {M}ilman's inequality and random subspaces which escape
  through a mesh in {${\bf R}^n$}}, Geometric aspects of functional analysis
  (1986/87), Lecture Notes in Math., vol. 1317, Springer, Berlin, 1988,
  pp.~84--106. \MR{950977}

\bibitem[Han23]{han2023noisy}
Qiyang Han, \emph{Noisy linear inverse problems under convex constraints:
  {E}xact risk asymptotics in high dimensions}, Ann. Statist. \textbf{51}
  (2023), no.~4, 1611--1638. \MR{4658570}

\bibitem[HR22]{han2022gaussian}
Qiyang Han and Huachen Ren, \emph{Gaussian random projections of convex cones:
  approximate kinematic formulae and applications}, arXiv preprint
  arXiv:2212.05545 (2022).

\bibitem[HS23]{han2023universality}
Qiyang Han and Yandi Shen, \emph{Universality of regularized regression
  estimators in high dimensions}, Ann. Statist. \textbf{51} (2023), no.~4,
  1799--1823. \MR{4658577}

\bibitem[HX23]{han2023distribution}
Qiyang Han and Xiaocong Xu, \emph{The distribution of ridgeless least squares
  interpolators}, arXiv preprint arXiv:2307.02044 (2023).

\bibitem[Joh01]{johnstone2001distribution}
Iain~M. Johnstone, \emph{On the distribution of the largest eigenvalue in
  principal components analysis}, Ann. Statist. \textbf{29} (2001), no.~2,
  295--327. \MR{1863961}

\bibitem[KL17]{koltchinskii2017concentration}
Vladimir Koltchinskii and Karim Lounici, \emph{Concentration inequalities and
  moment bounds for sample covariance operators}, Bernoulli \textbf{23} (2017),
  no.~1, 110--133. \MR{3556768}

\bibitem[KM05]{klartag2005empirical}
B.~Klartag and S.~Mendelson, \emph{Empirical processes and random projections},
  J. Funct. Anal. \textbf{225} (2005), no.~1, 229--245. \MR{2149924}

\bibitem[LMPV17]{liaw2017simple}
Christopher Liaw, Abbas Mehrabian, Yaniv Plan, and Roman Vershynin, \emph{A
  simple tool for bounding the deviation of random matrices on geometric sets},
  Geometric aspects of functional analysis, Lecture Notes in Math., vol. 2169,
  Springer, Cham, 2017, pp.~277--299. \MR{3645128}

\bibitem[LT11]{ledoux2013probability}
Michel Ledoux and Michel Talagrand, \emph{Probability in {B}anach {S}paces},
  Classics in Mathematics, Springer-Verlag, Berlin, 2011, Isoperimetry and
  processes, Reprint of the 1991 edition. \MR{2814399}

\bibitem[Men10]{mendelson2010empirical}
Shahar Mendelson, \emph{Empirical processes with a bounded {$\psi_1$}
  diameter}, Geom. Funct. Anal. \textbf{20} (2010), no.~4, 988--1027.
  \MR{2729283}

\bibitem[Mil71]{milman1971new}
Vitali~D. Milman, \emph{A new proof of the theorem of {A}. {D}voretzky on
  sections of convex bodies}, Funct. Anal. Appl \textbf{5} (1971), 28--37.

\bibitem[Min17]{minsker2017some}
Stanislav Minsker, \emph{On some extensions of {B}ernstein's inequality for
  self-adjoint operators}, Statist. Probab. Lett. \textbf{127} (2017),
  111--119. \MR{3648301}

\bibitem[MM21]{miolane2021distribution}
L\'{e}o Miolane and Andrea Montanari, \emph{The distribution of the {L}asso:
  uniform control over sparse balls and adaptive parameter tuning}, Ann.
  Statist. \textbf{49} (2021), no.~4, 2313--2335. \MR{4319252}

\bibitem[MP12]{mendelson2012generic}
Shahar Mendelson and Grigoris Paouris, \emph{On generic chaining and the
  smallest singular value of random matrices with heavy tails}, J. Funct. Anal.
  \textbf{262} (2012), no.~9, 3775--3811. \MR{2899978}

\bibitem[MPTJ07]{mendelson2007reconstruction}
Shahar Mendelson, Alain Pajor, and Nicole Tomczak-Jaegermann,
  \emph{Reconstruction and subgaussian operators in asymptotic geometric
  analysis}, Geom. Funct. Anal. \textbf{17} (2007), no.~4, 1248--1282.
  \MR{2373017}

\bibitem[MRSY23]{montanari2023generalization}
Andrea Montanari, Feng Ruan, Youngtak Sohn, and Jun Yan, \emph{The
  generalization error of max-margin linear classifiers: Benign overfitting and
  high-dimensional asymptotics in the overparametrized regime}, arXiv preprint
  arXiv:1911.01544v3 (2023).

\bibitem[Sto13]{stojnic2013framework}
Mihailo Stojnic, \emph{A framework to characterize performance of lasso
  algorithms}, arXiv preprint arXiv:1303.7291 (2013).

\bibitem[TAH18]{thrampoulidis2018precise}
Christos Thrampoulidis, Ehsan Abbasi, and Babak Hassibi, \emph{Precise error
  analysis of regularized {$M$}-estimators in high dimensions}, IEEE Trans.
  Inform. Theory \textbf{64} (2018), no.~8, 5592--5628. \MR{3832326}

\bibitem[Tal14]{talagrand2014upper}
Michel Talagrand, \emph{Upper and lower bounds for stochastic processes},
  Ergebnisse der Mathematik und ihrer Grenzgebiete. 3. Folge. A Series of
  Modern Surveys in Mathematics, vol.~60, Springer, Heidelberg, 2014, Modern
  methods and classical problems. \MR{3184689}

\bibitem[Tik18]{tikhomirov2018sample}
Konstantin Tikhomirov, \emph{Sample covariance matrices of heavy-tailed
  distributions}, Int. Math. Res. Not. IMRN (2018), no.~20, 6254--6289.
  \MR{3872323}

\bibitem[Tro16]{tropp2016expected}
Joel~A. Tropp, \emph{The expected norm of a sum of independent random matrices:
  an elementary approach}, High dimensional probability {VII}, Progr. Probab.,
  vol.~71, Springer, [Cham], 2016, pp.~173--202. \MR{3565264}

\bibitem[Ver18]{vershynin2018high}
Roman Vershynin, \emph{High-dimensional probability: An introduction with
  applications in data science}, Cambridge Series in Statistical and
  Probabilistic Mathematics, vol.~47, Cambridge University Press, Cambridge,
  2018. \MR{3837109}

\bibitem[vH17]{van2017structured}
Ramon van Handel, \emph{Structured random matrices}, Convexity and
  concentration, IMA Vol. Math. Appl., vol. 161, Springer, New York, 2017,
  pp.~107--156. \MR{3837269}

\bibitem[Zhi24]{zhivotovskiy2024dimension}
Nikita Zhivotovskiy, \emph{Dimension-free bounds for sums of independent
  matrices and simple tensors via the variational principle}, Electron. J.
  Probab. \textbf{29} (2024), Paper No. 13, 28. \MR{4693860}

\end{thebibliography}

\end{document}